\newtheorem{theo}{Theorem}[section]
\newtheorem{lemma}[theo]{Lemma}
\newtheorem{prop}[theo]{Proposition}
\newtheorem{cor}[theo]{Corollary}
\newtheorem{remark}[theo]{Remark}
\theoremstyle{definition}
\newtheorem{ex}[theo]{Example}
\newtheorem{rem}[theo]{Remark}
\newcommand{\oo}{\mathcal{O}}
\newcommand{\f}{\phi}
\newcommand{\spec}{\operatorname{Spec}}
\newcommand{\gal}{\operatorname{Gal}}
\newcommand{\Gl}{\operatorname{GL}}
\newcommand{\Aut}{\operatorname{Aut}}
\newcommand{\Hom}{\operatorname{Hom}}
\newcommand{\eX}{\mathfrak{X}}
\newcommand{\id}{\operatorname{id}}
\newcommand{\alg}{\operatorname{alg}}
\newcommand{\nn}{\mathbb{N}}
\newcommand{\C}{\mathbb{C}}
\newcommand{\CC}{\mathcal{C}}
\renewcommand{\AA}{\mathcal{A}}
\newcommand{\BB}{\mathcal{B}}
\newcommand{\DD}{\mathcal{D}}
\newcommand{\rk}{\operatorname{rk}}
\def\Sl{\operatorname{SL}}
\newcommand{\red}{\operatorname{red}}
\newcommand{\Rep}{\operatorname{Rep}}
\newcommand{\uRep}{\bold{Rep}}
\newcommand{\fun}{\operatorname{Fun}}
\newcommand{\loc}{\bold{Loc}}
\newcommand{\Eh}{\mathcal{E}}
\newcommand{\Fh}{\mathcal{F}}
\newcommand{\Th}{\mathcal{T}}
\newcommand{\Fr}{\operatorname{Fr}}
\newcommand{\Sh}{\mathcal{S}}
\newcommand{\ev}{\operatorname{ev}}
\newcommand{\coker}{\operatorname{coker}}
\newcommand{\nVec}{\operatorname{Vec}}
\newcommand{\rel}{\operatorname{rel}}
\newcommand{\ui}{\underline{i}}
\newcommand{\GG}{\mathbb{G}}
\newcommand{\QQ}{\mathbb{Q}}
\newcommand{\ZZ}{\mathbb{Z}}
\newcommand{\Fa}{\mathbb{F}}
\newcommand{\et}{\operatorname{et}}
\newcommand{\oF}{\overline{F}}
\newcommand{\uLoc}{\bold{Loc}}
\newcommand{\Iso}{\operatorname{Iso}}
\newcommand{\silo}{\xrightarrow{\sim}}
\newcommand{\tx}{\tilde{x}}
\newcommand{\colim}{\operatorname{colim}}
\newcommand{\uH}{\underline{H}}
\newcommand{\uAut}{\underline{\operatorname{Aut}}}
\newcommand{\tz}{\tilde{z}}
\newcommand{\rat}{\operatorname{rat}}
\newcommand{\ok}{\overline{k}}
\newcommand{\ox}{\overline{x}}
\newcommand{\uA}{\underline{A}}
\newcommand{\uk}{\underline{k}}
\newcommand{\sep}{\operatorname{sep}}
\newcommand{\Mh}{\mathcal{M}}
\newcommand{\tX}{\tilde{X}}
\newcommand{\ab}{\operatorname{ab}}
\newcommand{\car}{\operatorname{char}}
\newcommand{\Ext}{\operatorname{Ext}}
\newcommand{\oQ}{\overline{\QQ}}
\newcommand{\GL}{\operatorname{GL}}
\newcommand{\beweisende}{\hspace*{\fill}$\Box$}
\title[On the proalgebraic fundamental group]{On the proalgebraic fundamental group of topological spaces and amalgamated products of affine group schemes}
\author{Christopher Deninger} \author{Michael Wibmer}
\address{Christopher Deninger, Mathematical Institute, University of M\"unster, Einsteinstr. 62, 48149 M\"unster, Germany, \url{https://www.uni-muenster.de/Arithm/deninger/index.html}}
\email{c.deninger@uni-muenster.de}
\address{Michael Wibmer, Institute of Analysis and Number Therory, Graz University of Technology, Kopernikusgasse~24, 8010 Graz, Austria, \url{https://sites.google.com/view/wibmer}}
\email{wibmer@math.tugraz.at}
\thanks{Christopher Deninger: Funded by the Deutsche Forschungsgemeinschaft (DFG, German Research Foundation) under Germany's Excellence Strategy EXC 2044--390685587, Mathematics M\"unster: Dynamics--Geometry--Structure and the CRC 878 Groups, Geometry \& Actions\\
Michael Wibmer: Supported by the Lise Meitner grant M 2582-N32 of the Austrian Science Fund FWF}
\subjclass[2020]{14L15, 11G09, 18M25, 14C15}
\date{\today}
\dedicatory{\normalsize Dedicated to Massimo Bertolini on the occation of his 60th birthday}
\begin{document}

\begin{abstract}
	The proalgebraic fundamental group of a connected topological space $X$, recently introduced by the first author, is an affine group scheme whose representations classify local systems of finite-dimensional vector spaces on $X$.
	
	In this article, we further develop the theory of the proalgebraic fundamental group, in particular, we establish homotopy invariance and a Seifert-van Kampen theorem. To facilitate the latter, we study amalgamated free product of affine group schemes. We also compute the proalgebraic fundamental group of the arithmetically relevant Kucharcyzk-Scholze spaces and compare it to the motivic Galois group.
\end{abstract}	
	
\maketitle

In \cite{KS}, Kucharczyk and Scholze introduced the \'etale fundamental group $\pi^{\et}_1 (X,x)$ of a pointed connected topological space $(X,x)$, using Grothendieck's formalism of Galois categories. It is a profinite group which classifies the finite coverings of $X$. For path-connected, locally path-connected and semi-locally simply connected spaces $X$, the group $\pi^{\et}_1 (X,x)$ is the profinite completion of the ordinary fundamental group $\pi_1 (X,x)$. For a field $F$ containing the maximal cyclotomic extension $\QQ (\mu_{\infty})$ of $\QQ$, Kucharczyk and Scholze defined conneted topological spaces $\eX_F (\C)$ and $X_F$ whose \'etale fundamental groups can be identified with the absolute Galois group of $F$. These spaces are in general not (locally) path-connected and are rather wild compared to $CW$-complexes for example; hence the need for the introduction of $\pi^{\et}_1 (X,x)$. One could hope to realize not only the Galois group of a field $F$, but even the motivic Galois group of $F$, which for motives over $F$ with coefficients in a field $k$ is a $k$-group i.e. an affine group scheme over $k$. The maximal pro-\'etale quotient of the motivic Galois group is the ordinary Galois group viewed as a $k$-group. For these reasons, in \cite{D} a fundamental $k$-group $\pi_k (X,x)$ for connected pointed topological spaces $(X,x)$ was introduced and studied. It is defined using the Tannakian formalism \cite{DeligneMilne:TannakianCategories}, as the $k$-group whose finite dimensional representations classify the local systems $\Eh$ of finite-dimensional $k$-vector spaces on $X$ with respect to the fibre functor $\Eh \mapsto \Eh_x$. It is shown in \cite{D} that the maximal pro-\'etale quotient of $\pi_k (X,x)$ is $\pi^{\et}_1 (X,x)$ as a $k$-group. For well behaved topological spaces as above, $\pi_k (X,x)$ is the proalgebraic completion of $\pi_1 (X,x)$. Moreover, using ideas of Nori and a theorem of Deligne \cite{Deligne:LetterToAVasiu} on isomorphisms of fibre functors, a replacement for the ordinary universal covering space was constructed in \cite{D} if $k$ is algebraically closed. Only the beginnings of the theory of the proalgebraic fundamental group $\pi_k (X,x)$ were established in \cite{D}. The most glaring omissions were discussions of homotopy invariance and of a Seifert-van Kampen theorem. Moreover the proof in \cite{D} that $\pi_k (X,x)$ is always reduced was not based on a general Tannakian criterion but on a precise study of the special situation. Finally, the proalgebraic fundamental groups of the Kucharczyk-Scholze spaces $\eX_F (\C)$ and $X_F$ were not calculated, although this is a necessary first step to find topological spaces with $\pi_k$ related to motivic Galois groups. In the present paper, we clarify all these items.


For the Seifert-van Kampen theorem, we construct and study amalgamated free products of $k$\=/groups.
It is known \cite{Porst:TheFormalTheoryOfHopfAlgebrasPartII} that the category of commutative Hopf algebras over $k$ is complete. In particular, it has fibre products and so amalgamated free products of $k$-groups exist. However, this existence result offers no insight into the structure and the algebraic properties of amalgamated free products of $k$-groups. Here we offer two constructions of amalgamated free products of $k$\=/groups. The first construction is based on the formalism of Tannakian categories applied to fibre products of categories. It has the advantage that it leads to a proof of the  Seifert-van Kampen theorem for the proalgebraic fundamental group rather directly. The second construction is based on an explicit, albeit quite involved construction with Hopf algebras. It has the advantage that it allows us to deduce properties of the amalgamated free product from properties of the factors. For example, we show that the amalgamated free product of two geometrically reduced $k$-groups is geometrically reduced. 



We conclude the introduction with an overview of the article. In section \ref{sec:1} we construct the amalgamated free product of $k$-groups using the formalism of Tannakian categories applied to fibre products of categories. In section \ref{sec:2} we present an explicit construction for the Hopf algebras of free amalgamated products of $k$-groups and use this to derive several properties of amalgamated free products.
The proof in section \ref{sec:3} of the Seifert-van Kampen theorem for the proalgebraic fundamental group follows immediately from our categorial construction of the amalgamated free product of $k$-groups. In section \ref{sec:4} we prove Tannakian criteria for an affine group scheme over a perfect field $k$ of positive characteristic to be (geometrically) reduced and even stronger to be perfect. Although these criteria are not stated explicitely in \cite{Coulembier:TannakianCategoriesInPositiveCharacteristic}, all the necessary ideas can be found there. As an application we show that the proalgebraic fundamental group $\pi_k (X,x)$ of a connected pointed topological space $(X,x)$ is always perfect. This improves the result of \cite{D} asserting that $ \pi_k (X,x)$ is reduced with an easier and more conceptual proof. In the last section \ref{sec:5} we first prove homotopy invariance of $\pi_k (X,x)$ for all fields $k$. The proof requires some care since we make no assumptions on the topologial space $X$ apart from being connected. We then recall the construction of the spaces $\eX_F (\C)$ and $X_F$ for fields $F \supset \QQ (\mu_{\infty})$ in \cite{KS} and compute $\pi_k$ along the same lines as $\pi^{\et}_1$ in \cite{KS}. Whereas $\pi^{\et}_1$ vanishes if $F$ is algebraically closed this is not true for $\pi_k$. We find that $\pi_k (X_{\oF} , \ox)$ is the identity component $\pi_k (X_F, x)^0$ which turns out to be commutative. The pro-unipotent part of $\pi_k (X_F , x)^0$ is related to the pro-unipotent part of the motivic Galois group for motives over $F$ with coefficients in $k \supset \QQ$. For the pro-reductive quotients there does not seem to be a relation. This shows again that as already noted in the introduction to \cite{KS} the spaces are not yet the optimal ones (in \cite{KS} it was noted that the Steinberg relations do not hold in the rational cohomology of $X_F$). 


\medskip

The authors are grateful to Andy Magid and Hans Porst for helpful comments. The second author is grateful for the invitation to the Cluster of Excellency, Mathematics M\"{u}nster, where this work originated.
%

\hspace{4mm}

\noindent {\bf Conventions.} Throughout this article $k$ is a field. All schemes are assumed to be over
 $k$ unless the contrary is indicated. All rings and algebras are assumed to be unital and commutative. In particular, all Hopf algebras are assumed to be commutative. Unadorned tensor products of $k$\=/vector spaces are understood to be over $k$. For simplicity, we use the term ``$k$-group'' for ``affine group scheme over $k$''. For a $k$-group $G$ we denote its Hopf algebra of global sections by $k[G]$. A morphism $\f\colon G\to H$ of $k$-groups is a quotient map if it is faithfully flat (equivalently, the dual map $\f^*\colon k[H]\to k[G]$ is injective).

\section{Fibre products of Tannakian categories} \label{sec:1}

In this section we discuss fibre products of neutralized Tannakian categories. These 
fibre products can be used to establish the existence of amalgamated free products of $k$-groups (Corollary~\ref{t1-12}) and they play a crucial role in the proof of our Seifert-van Kampen theorem for the proalgebraic fundamental group (Theorem \ref{t32}).

\subsection{Fibre products of categories}

While the product category of two categories is a well-known construction, the fibre product of two categories may be less familiar. We recall the definition from \cite[\href{https://stacks.math.columbia.edu/tag/003O}{Tag 003O}, Example 4.31.3]{stacks-project} or \cite[Chapter VII, \S 3]{Bass:AlgebraicKtheory}.

Let $F\colon \mathcal{A}\to \mathcal{C}$ and $G\colon \mathcal{B}\to \mathcal{C}$ be functors. The \emph{fibre product} $\mathcal{A}\times_\mathcal{C}\mathcal{B}$ is the category defined as follows: Objects are triples  $(A,B,c)$, where $A,B$ are objects of $\mathcal{A}$, $\BB$ and $c\colon F(A)\to G(B)$ is an isomorphism. A morphism $(A,B,c)\to (A',B',c')$ in $\AA\times_\CC \BB$ is a pair $(a,b)$ of morphisms $a\colon A\to A'$, $b\colon B\to B'$ such that
$$
\xymatrix{
	F(A) \ar^-{F(a)}[r] \ar_-{c}[d] & F(A') \ar^-{c'}[d] \\
	G(B) \ar^-{G(b)}[r] & G(B')
}
$$
commutes. The composition of two morphisms $(a,b)\colon (A,B,c)\to (A',B',c')$ and $(a',b')\colon (A',B',c')\to (A'',B'',c'')$ in $\AA\times_\CC \BB$ is $(a'a,b'b)$. If $\CC$ is the trivial category with one object and one morphism (the identity), then there are unique functors $F$ and $G$, and $\AA \times_{\CC} \BB = \AA \times \BB$ is the product of $\AA$ and $\BB$.


\begin{ex}
For an (abstract) group $G$, let $\Rep(G)$ denote the category of finite dimensional $k$-linear representations of $G$.	Let $\f_1\colon H\to G_1$ and $\f_2\colon H\to G_2$ be morphisms of groups. We then have restriction functors $\operatorname{Res}_{\f_1}\colon \Rep(G_1)\to \Rep(H)$ and $\operatorname{Res}_{\f_2}\colon \Rep(G_2)\to \Rep(H)$, which we can use to form the fibre product $\Rep(G_1)\times_{\Rep(H)}\Rep(G_2)$. The free product $G_1*_H G_2$ of $G_1$ and $G_2$ with amalgamation over $H$, comes with a commutative diagram 
$$
\xymatrix{
& G_1*_H G_2 &	\\
G_1 \ar^-{i_1}[ru] & &  G_2 \ar_-{i_2}[lu] \\	
& H \ar^-{\f_1}[lu] \ar_-{\f_2}[ru] &
}
$$
Define a functor $$F\colon \Rep(G_1*_HG_2)\to \Rep(G_1)\times_{\Rep(H)}\Rep(G_2),\ V\rightsquigarrow (\operatorname{Res}_{i_1}(V), \operatorname{Res}_{i_2}(V), \id_V)$$ by $F(f)=(\operatorname{Res_{i_1}(f)}, \operatorname{Res_{i_2}(f)})$ for a morphism $f\colon V\to W$ in $\Rep(G_1*_HG_2)$.
A $k$-linear map $V\to W$ between $G_1*_H G_2$-representation is a $G_1*_H G_2$-morphism if and only if it is a $G_1$\=/morphism and a $G_2$-moprhism. Thus $F$ is fully faithful. To see that $F$ is essentially surjective, consider an object $(V_1,V_2,c)$ of $\Rep(G_1)\times_{\Rep(H)}\Rep(G_2)$. We can use the $k$-linear isomorphism $c\colon V_1\to V_2$ to define a $G_2$-representation on $V_1$. Explicitly, we have $g_2(v_1)=c^{-1}(g_2(c(v_1)))$ for $v_1\in V_1$ and $g_2\in G_2$. Because $c$ is an $H$-morphism, an $h\in H$ acts in this new representation on $V_1$ in the same fashion as it would act through the original $G_1$ representation on $V_1$. That is, the morphisms $G_1\to \Gl(V_1)$ and $G_2\to \Gl(V_1)$ have the same restriction to $H$. The universal property of $G_1*_H G_2$ yields a morphism $G_1*_H G_2\to \Gl(V_1)$, i.e., $V_1$ is a $G_1*_H G_2$-representation with $\operatorname{Res_{i_1}}(V_1)$ the original $G_1$-representation on $V_1$ and $\operatorname{Res_{i_2}}(V_1)\simeq V_2$ as $G_2$-representation via $c$. So $F(V_1)=(V_1,\operatorname{Res_{i_2}}(V_1),\id_{V_1})$ is isomorphic to $(V_1,V_2,c)$ via the isomorphism $(\id_{V_1},c)$ in $\Rep(G_1)\times_{\Rep(H)}\Rep(G_2)$. Thus $F$ is an equivalence of categories. Note that for the trivial group $H$, the category $\Rep H$ is trivial and the free product $G_1 \ast G_2$ corresponds to $\Rep (G_1) \times \Rep (G_2)$. 
\end{ex}

\begin{ex} \label{t1-2}
	For a topological space $X$, let $\operatorname{Sh}(X)$ denote the category of sheaves of abelian groups on $X$. Let $U_1, U_2$ be open subset of $X$ such that $X=U_1\cup U_2$. We have restriction functors $\operatorname{Sh}(U_1)\to \operatorname{Sh(U_1\cap U_2)},\ \mathcal{F}\rightsquigarrow \mathcal{F}|_{U_1\cap U_2}$ and $\operatorname{Sh}(U_2)\to \operatorname{Sh(U_1\cap U_2)},\ \mathcal{F}\rightsquigarrow\mathcal{F}|_{U_1\cap U_2}$, which we can use to form the fibre product $\operatorname{Sh}(U_1)\times_{\operatorname{Sh}(U_1\cap U_2)} \operatorname{Sh}(U_2)$.
	Basic sheaf theory (\cite[\href{https://stacks.math.columbia.edu/tag/00AK}{Tag 00AK}]{stacks-project}) implies that $$\operatorname{Sh(X)}\to \operatorname{Sh}(U_1)\times_{\operatorname{Sh}(U_1\cap U_2)} \operatorname{Sh}(U_2),\ \mathcal{F}\rightsquigarrow (\mathcal{F}|_{U_1},\mathcal{F}|_{U_2}, \id_{\mathcal{F}|_{U_1\cap U_2}})$$
	is an equivalence of categories. Note that since $\Fh (\emptyset) = 0$ for any sheaf of abelian groups, $\operatorname{Sh}(\emptyset)$ is the trivial category. So in case $U_1\cap U_2$ is empty, the above construction yields the product category.
\end{ex}


%

We return to the discussion of the general case. We have a functor $P_1\colon \mathcal{A}\times_\mathcal{C}\mathcal{B}\to \AA$ given by $P_1(A,B,c)=A$ and $P_1(a,b)=a$. Similarly, $P_2\colon \mathcal{A}\times_\mathcal{C}\mathcal{B}\to \BB$ is given by $P_2(A,B,c)=B$ and $P_2(a,b)=b$. The diagram
$$ 
\xymatrix{
	& \AA\times_\CC\BB \ar_-{P_1}[ld]  \ar^-{P_2}[rd]  & \\
	\AA \ar_-F[rd] & & \BB \ar^-G[ld] \\
	& \CC &
}
$$
$2$-commutes in the sense that the functors $FP_1$ and $GP_2$ are isomorphic via an isomorphism $\psi\colon FP_1\simeq G P_2$. In fact, for any object $(A,B,c)$ of $\AA\times_\CC\BB$ we have an isomorphism 
$$ \psi_{(A,B,c)}\colon(FP_1)(A,B,c)=F(A)\xrightarrow{c} G(B)=(GP_2)(A,B,c).$$

To state the ($2$-categorical) universal property satisfied by the fibre product $\AA\times_\CC\BB$, we formaly introduce $2$-commutative diagrams as follows. A \emph{$2$-commutative diagram} (with respect to $F\colon \AA\to \CC$ and $G\colon \BB\to \CC$) is a quadruple $(\DD, Q_1,Q_2,\alpha)$, where $Q_1\colon \DD\to \AA$ and $Q_1\colon \DD\to\BB$ are functors and $\alpha\colon FQ_1\simeq GQ_2$ is an isomorphism of functors. A morphism 
$(\DD, Q_1,Q_2,\alpha)\to (\DD', Q'_1,Q'_2,\alpha')$ of $2$-commutative diagrams is a triple $(H,\beta_1,\beta_2)$, with $H\colon \DD\to \DD'$ a functor and $\beta_1\colon Q_1\to Q_1'H$ and $\beta_2\colon Q_2\to Q_2'H$ isomorphisms of functors, such that 
$$
\xymatrix{
FQ_1 \ar[r]^-{\id_F\star\beta_1} \ar_-{\alpha}[d] & FQ_1'H \ar^-{\alpha'\star\id_H}[d] \\
GQ_2 \ar[r]^-{\id_G\star\beta_2} & GQ_2'H	
}
$$
commutes. An isomorphism between two morphisms $(H,\beta_1,\beta_2)$, $(H',\beta_1', \beta_2')$ (with the same source and target) is an isomorphism $\theta\colon H\to H'$ such that
$$
\xymatrix{
Q_1 \ar^-{\beta_1}[r] \ar_{\beta_1'}[rd] & Q_1'H \ar^-{\id_{Q_1'}\star\theta}[d] \\
& Q_1'H'	
}
\quad \text{ and } \quad
\xymatrix{
	Q_2 \ar^-{\beta_2}[r] \ar_{\beta_2'}[rd] & Q_2'H \ar^-{\id_{Q_2'}\star\theta}[d] \\
	& Q_2'H'	
}
$$
commute.
The fibre product of categories satisfies the following ($2$-categorical) universal property.

\begin{lemma} \label{lemma: universal property of fibre product}
	Let $F\colon \mathcal{A}\to \mathcal{C}$ and $G\colon \mathcal{B}\to \mathcal{C}$ be functors.
	For every $2$-commutative diagram $(\DD,Q_1,Q_2,\alpha)$ there exists a morphism  $(H,\beta_1,\beta_2)\colon (\DD,Q_1,Q_2,\alpha)\to (\AA\times_\CC\BB, P_1,P_2,\psi)$ of $2$\=/commutative diagrams. If $(H',\beta_1',\beta_2')$ is another such morphism, then there exists a unique isomorphism $\theta$ between $(H,\beta_1,\beta_2)$ and $(H',\beta_1',\beta_2')$.
\end{lemma}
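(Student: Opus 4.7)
The plan is to construct the comparison morphism $(H,\beta_1,\beta_2)$ essentially tautologically and then observe that the uniqueness of $\theta$ is forced by the very shape of the data. Given the $2$-commutative diagram $(\DD,Q_1,Q_2,\alpha)$, I would define a functor $H\colon \DD\to \AA\times_\CC\BB$ by
\[
H(D)=\bigl(Q_1(D),Q_2(D),\alpha_D\bigr),\qquad H(f)=\bigl(Q_1(f),Q_2(f)\bigr).
\]
The fact that $H(f)$ really lies in $\AA\times_\CC\BB$ — that is, the required square with $\alpha_D$ and $\alpha_{D'}$ commutes — is precisely the naturality of $\alpha\colon FQ_1\simeq GQ_2$. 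Since by construction $P_1H=Q_1$ and $P_2H=Q_2$ on the nose, I can take $\beta_1\colon Q_1\to P_1H$ and $\beta_2\colon Q_2\to P_2H$ to be the identity natural transformations. The required compatibility between $\alpha$, $\psi$, $\beta_1$, $\beta_2$ then reduces, at each object $D$, to the equality $\psi_{H(D)}=\alpha_D$, which holds by the very definition of $\psi_{(A,B,c)}=c$ and the formula for $H(D)$.

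For uniqueness, let $(H',\beta_1',\beta_2')$ be a second such morphism, and for each object $D\in\DD$ write $H'(D)=(A',B',c')$. A prospective $\theta\colon H\to H'$ must satisfy $\theta_D\colon H(D)\to H'(D)$ in $\AA\times_\CC\BB$, and the two triangles in the definition of an isomorphism of morphisms say that $P_1(\theta_D)=(\beta_1')_D$ and $P_2(\theta_D)=(\beta_2')_D$ (using $\beta_1=\id_{Q_1}$ and $\beta_2=\id_{Q_2}$). This determines $\theta_D$ uniquely as the pair $\bigl((\beta_1')_D,(\beta_2')_D\bigr)$. What remains is to check that this pair is a genuine morphism in the fibre product, i.e.\ that the square
\[
\xymatrix{
F(Q_1(D)) \ar^-{F((\beta_1')_D)}[r] \ar_-{\alpha_D}[d] & F(A') \ar^-{c'}[d] \\
G(Q_2(D)) \ar^-{G((\beta_2')_D)}[r] & G(B')
}
\]
commutes; but this is exactly the compatibility axiom, evaluated at $D$, built into $(H',\beta_1',\beta_2')$ being a morphism of $2$-commutative diagrams. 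Naturality of $\theta$ in $D$ follows automatically from the naturality of $\beta_1'$ and $\beta_2'$, and $\theta_D$ is an isomorphism because $(\beta_1')_D$ and $(\beta_2')_D$ are.

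The main obstacle is just bookkeeping: one must keep careful track of two layers of data (the functor level and the natural transformation level) and distinguish the diagrams that encode naturality from those that encode compatibility with $\alpha$ and $\psi$. Conceptually no choices are made, and the proof is essentially forced by unwinding the definitions.
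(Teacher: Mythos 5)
Your construction is exactly the one the paper uses (which it attributes to the Stacks project and then makes explicit): the same tautological $H(D)=(Q_1(D),Q_2(D),\alpha_D)$ with $\beta_1=\beta_2=\id$, and the same forced $\theta_D=((\beta_1')_D,(\beta_2')_D)$ for uniqueness. Your argument is correct and in fact fills in the verifications the paper leaves implicit.
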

\begin{proof}
	This is \cite[\href{https://stacks.math.columbia.edu/tag/02X9}{Tag 02X9}]{stacks-project}. 
	For what follows, it will be helpful to have explicit formulas for $(H,\beta_1,\beta_2)$ and $\theta$. On objects let $H$ be given by $H(D)=(Q_1(D),Q_2(D),\alpha_D)$ and for a morphism $f\colon D\to D'$ in $\DD$ set $H(f)=(Q_1(f),Q_2(f))\colon (Q_1(D),Q_2(D),\alpha_D)\to (Q_1(D'),Q_2(D'),\alpha_{D'})$. As $P_1H=Q_1$, $\beta_1\colon Q_1\simeq  P_1H$ can be chosen to be the identity. Similarly, $\beta_2\colon Q_2\simeq P_2H$ is the identity.
	
	For another morphism $(H',\beta'_1,\beta'_2)\colon (\DD,Q_1,Q_2,\alpha)\to (\AA\times_\CC\BB, P_1,P_2,\psi)$ of $2$\=/commutative diagrams, the commutativity of 
	$$
	\xymatrix{
		FQ_1 \ar[r]^-{\id_F\star\beta'_1} \ar_-{\alpha}[d] & FP_1H' \ar^-{\psi\star\id_{H'}}[d] \\
		GQ_2 \ar[r]^-{\id_G\star\beta'_2} & GP_2H'	
	}
	$$
	shows that for any object $D$ of $\DD$ we have a morphism $$\theta_D=(\beta'_{1,D},\beta'_{2,D})\colon H(D)=(Q_1(D),Q_2(D),\alpha_D)\longrightarrow H'(D)=(P_1(H'(D)),P_2(H'(D)),\alpha'_D)$$
	in $\AA\times_{\CC}\BB$.
\end{proof}




Before considering fibre products of Tannakian categories, we need to a take a brief look at fibre products of abelian categories.

\begin{lemma} \label{lemma: fibre product of abelian}
	If $F\colon\AA\to\CC$ and $G\colon \BB\to\CC$ are exact additive functors between abelian categories, then $\AA\times_\CC\BB$ is an abelian category.
\end{lemma}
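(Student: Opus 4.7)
The plan is to verify the standard abelian category axioms for $\AA\times_\CC\BB$ componentwise, using at each step that $F$ and $G$ preserve the relevant structure.

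First I would check that $\AA\times_\CC\BB$ is additive. The zero object is $(0,0,c_0)$, where $c_0\colon F(0)\to G(0)$ is the canonical isomorphism coming from the fact that additive functors send zero objects to zero objects. Given two objects $(A,B,c)$ and $(A',B',c')$, the direct sum is $(A\oplus A',\, B\oplus B',\, c\oplus c')$, where we identify $F(A\oplus A')\simeq F(A)\oplus F(A')$ and likewise for $G$; this is an isomorphism since $F$ and $G$ are additive. The set of morphisms $(A,B,c)\to (A',B',c')$ is an abelian group because it is a subgroup of $\Hom_\AA(A,A')\oplus \Hom_\BB(B,B')$ (the commutativity condition cuts out a subgroup), and bilinearity of composition is inherited.

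Next I would construct kernels and cokernels. For a morphism $(a,b)\colon (A,B,c)\to (A',B',c')$ let $k_a\colon K_a\to A$ and $k_b\colon K_b\to B$ be kernels in $\AA$ and $\BB$. Since $F$ is exact, $F(k_a)\colon F(K_a)\to F(A)$ is a kernel of $F(a)$, and similarly $G(k_b)$ is a kernel of $G(b)$. From the commutative square defining the morphism, $c\colon F(A)\silo G(B)$ sends $F(K_a)$ isomorphically onto $G(K_b)$; call this induced isomorphism $c_K$. Then $(K_a,K_b,c_K)$ together with $(k_a,k_b)$ is easily checked to be the kernel in $\AA\times_\CC\BB$, the universal property reducing immediately to the universal properties in $\AA$ and $\BB$. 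Cokernels are constructed symmetrically using exactness again.

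Finally I would check that every monomorphism is a kernel and every epimorphism is a cokernel. A morphism $(a,b)$ is a monomorphism in $\AA\times_\CC\BB$ iff both $a$ and $b$ are monomorphisms in their respective categories (one direction is formal; for the other, test against kernels using the previous paragraph and observe that vanishing of the kernel in each coordinate forces $(a,b)$ to be monic). Dually for epimorphisms. Now if $(a,b)$ is monic, then $a$ is the kernel of its cokernel in $\AA$ and $b$ is the kernel of its cokernel in $\BB$; assembling these via the isomorphism $c$ (using exactness of $F$ and $G$ on cokernels to transfer $c$ compatibly) realises $(a,b)$ as the kernel of its cokernel in $\AA\times_\CC\BB$. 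The epi case is dual.

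The one step requiring genuine care is the coherent bookkeeping of the isomorphism $c$ throughout: checking that it restricts to an isomorphism between the constructed kernels (and descends to one between the constructed cokernels), and that these induced isomorphisms are compatible with all relevant universal maps. This is precisely where exactness of $F$ and $G$ enters — without it, $F$ and $G$ would not carry (co)kernels in $\AA$ and $\BB$ to (co)kernels of the corresponding morphisms in $\CC$, and the isomorphism $c$ would have no reason to induce the required isomorphism on (co)kernels. Once this compatibility is recorded, every axiom of an abelian category follows from the corresponding axiom in $\AA$ and $\BB$.
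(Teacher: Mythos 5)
Your proof is correct and follows essentially the same route as the paper: both build the additive structure and the kernels/cokernels componentwise, with exactness of $F$ and $G$ used exactly where you say, to transport the isomorphism $c$ to the (co)kernels. The only difference is cosmetic — you verify the final axiom in the form ``every mono is the kernel of its cokernel'' (and dually), while the paper checks the equivalent condition that $\operatorname{Coim}(a,b)\to\operatorname{Im}(a,b)$ is an isomorphism, both of which follow immediately from the componentwise construction.
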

\begin{proof}
	Let us begin by defining the addition on the hom-sets of $\AA\times_\CC \BB$. Let $(a,b),(a',b')\colon (A,B,c)\to (A',B',c')$ be two morphisms in $\AA\times_\CC\BB$. Adding the identities $G(b)c=c'F(a)$ and $G(b')c=c'F(a')$ and using the additivity of $F$ and $G$ yields $G(b+b')c=c'F(a+a')$. So we can define $(a,b)+(a',b')=(a+a',b+b')$. This defines the structure of an abelian group on the hom-sets such that composition is bilinear, i.e., $\AA\times_\CC \BB$ is a pre-additive category.
	
	 A zero object of $\AA\times_\CC \BB$ is given by $(0_\AA,0_\BB,0)$, where $0_\AA$ and $0_\BB$ are zero objects of $\AA$ and $\BB$ respectively and $0\colon F(0_\AA)\to G(0_\BB)$ is the zero morphism. The direct sum of $(A,B,c)$ and $(A',B',c')$ is $(A\oplus A',B\oplus B',c\oplus c')$. So $\AA\times_\CC \BB$ is an additive category.
	 
	 Let $(a,b)\colon (A,B,c)\to (A',B',c')$ be a morphism in $\AA\times_\CC\BB$. Since $F$ and $G$ preserve kernels, there exists a unique isomorphism $c_{\ker}\colon F(\ker(a))\to G(\ker(b))$ such that
	 $$
	 \xymatrix{
	 F(\ker(a))\ar[r] \ar_-{c_{\ker}}@{..>}[d] & F(A) \ar^-{F(a)}[r] \ar_-{c}[d] & F(A')  \ar^-{c'}[d] \\
	 G(\ker(b)) \ar[r] & G(B) \ar^-{G(b)}[r] & G(B')	
	 }
	 $$
	 commutes. The resulting morphism $(\ker(a),\ker(b),c_{\ker})\to (A,B,c)$ in $\AA\times_\CC\BB$ is then a kernel of $(a,b)$. A similar construction shows that cokernels exist in $\AA\times_\CC\BB$. Since kernels and cokernels are constructed componentwise, it follows that for any morphism $(a,b)$ in $\AA\times_\CC\BB$ the morphism $\operatorname{Coim}(a,b)\to \operatorname{Im}(a,b)$ is an isomorphism. Thus $\AA\times_\CC \BB$ is abelian.	 
\end{proof}

\subsection{Tannakian categories}
Our next goal is to show that the fibre product of Tannakian categories is naturally a Tannakian category. To this end, we first recall the basic facts concerning Tannakiann categories. For more background see \cite{DeligneMilne:TannakianCategories} and \cite{Deligne:categoriestannakien}.

A \emph{tensor category} is a tuple $(\AA,\otimes,\f,\psi)$, where $\AA$ is a category, $\otimes\colon \AA\times\AA\to \AA,\ (X,Y)\rightsquigarrow X\otimes Y$ is a functor and $\f$, $\psi$ are isomophisms of functors expressing the associativity and commutativity of $\otimes$. More precisely, the \emph{associativity constraint} $\f$ has components $\f_{X,Y,Z}\colon X\otimes (Y\otimes Z)\to(X\otimes Y)\otimes Z$ and the \emph{commutativity constraint} $\psi$ has components $\psi_{X,Y}\colon X\otimes Y\to Y\otimes X$. The functorial isomorphisms $\f$ and $\psi$ are required to satisfy three \emph{coherence conditions} (see \cite[Section~1]{DeligneMilne:TannakianCategories} for details). It is also required that there exists an \emph{identity object} $(\mathds{1},u)$. This means that $\AA\to\AA,\ X\rightsquigarrow \mathds{1}\otimes X$ is an equivalence of categories and $u\colon \mathds{1}\to\mathds{1}\otimes\mathds{1}$ is an isomorphism. 

Then there exists a unique isomorphism of functors $l$ with component $l_X\colon X\to \mathds{1}\otimes X$ such that certain properties are satisfied (\cite[Prop. 1.3 (a)]{DeligneMilne:TannakianCategories}). Similarly, there is functorial isomorphism $r$ with components $r_X\colon X\to X\otimes\mathds{1}$.

We will usually omit $\f$ and $\psi$ from the notation and refer to $(\AA,\otimes)$, or even $\AA$, as a tensor category. A \emph{tensor functor} $(\AA,\otimes)\to (\AA',\otimes')$ is a pair $(F,\alpha)$, where $F\colon \AA\to \AA'$ is a functor and $\alpha$ is an isomorphism of functors with components $\alpha_{X,Y}\colon F(X)\otimes F(Y)\to F(X\otimes Y)$ such that three natural properties involving respectively the associativity constraints, the commutativity constraints and the identity objects, are satisfied (\cite[Def. 1.8]{DeligneMilne:TannakianCategories}). We will often omit the functorial isomorphism $\alpha$ from the notation and simply refer to $F$ as a tensor functor.

A tensor category $(\AA,\otimes)$ is \emph{rigid}, if for every object $X$ of $\AA$ the exists an object $X^\vee$ (a \emph{dual} of $X$) of $\AA$ together with morphisms $\operatorname{ev}\colon X\otimes X^\vee\to \mathds{1}$ and $\delta\colon\mathds{1}\to X^\vee\otimes X$ such that

$$X\xrightarrow{r} X\otimes\mathds{1}\xrightarrow{\id\otimes \delta}X\otimes (X^\vee\otimes X)\xrightarrow{\f}(X\otimes X^\vee)\otimes X\xrightarrow{\operatorname{ev}\otimes\id}\mathds{1}\otimes X\xrightarrow{l^{-1}}X$$
and 
$$X^\vee\xrightarrow{l}\mathds{1}\otimes X^\vee\xrightarrow{\delta\otimes\id}(X^\vee\otimes X)\otimes X^\vee\xrightarrow{\f^{-1}}X^\vee\otimes(X\otimes X^\vee)\xrightarrow{\id\otimes\operatorname{ev}}X^\vee\otimes\mathds{1}\xrightarrow{r^{-1}}X^\vee$$
are the identity. The above definition of rigid is equivalent to the one used in \cite{DeligneMilne:TannakianCategories} by \cite[Section 2]{Deligne:categoriestannakien}.
Given $X, X^\vee$ and $\operatorname{ev}$ such that there exists a $\delta$ verifying the above conditions, then $\delta$ is uniquely determined. Moreover, the pair $(X^\vee,\operatorname{ev})$ is unique up to a unique isomorphism. For a morphism $f\colon X\to Y$ the \emph{transpose} $^{t}f\colon Y^\vee\to X^\vee$ of $f$ is defined as the composition
$${^t}f\colon Y^\vee\simeq\mathds{1}\otimes Y^\vee\xrightarrow{\delta\otimes \id}X^\vee\otimes X\otimes Y^\vee\xrightarrow{\id\otimes f\otimes \id}X^\vee\otimes Y\otimes Y^\vee\xrightarrow{\id\otimes\operatorname{ev}} X^\vee\otimes\mathds{1}\simeq X^\vee.$$
For $f$ invertible one sets $f^\vee=({^t f})^{-1}\colon X^\vee\to Y^\vee$.

\begin{lemma} \label{lemma: tensor on fibre product of tensor categories}
	Let $F\colon \AA\to\CC$ and $G\colon \BB\to\CC$ be tensor functors between tensor categories. Then $\AA\times_\CC\BB$ naturally acquires the structure of a tensor category such that the projections  $\AA\times_\CC\BB\to \AA$ and  $\AA\times_\CC\BB\to \BB$ are tensor functors. Moreover, if $\AA$, $\BB$ and $\CC$ are rigid, then $\AA\otimes_\CC\BB$ is rigid.
\end{lemma}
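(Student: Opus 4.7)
The plan is to lift all structure from $\AA$ and $\BB$ componentwise, using the tensor constraints $\alpha^F_{A,A'}\colon F(A)\otimes F(A')\to F(A\otimes A')$ and $\alpha^G_{B,B'}\colon G(B)\otimes G(B')\to G(B\otimes B')$ of $F$ and $G$ to transport the third coordinate. For objects, set
$$
(A,B,c)\otimes (A',B',c') = \bigl(A\otimes A',\, B\otimes B',\, c\boxtimes c'\bigr),
$$
where $c\boxtimes c'\colon F(A\otimes A')\to G(B\otimes B')$ is the composite
$$
F(A\otimes A')\xrightarrow{(\alpha^F)^{-1}} F(A)\otimes F(A')\xrightarrow{c\otimes c'} G(B)\otimes G(B')\xrightarrow{\alpha^G} G(B\otimes B').
$$
For morphisms, set $(a,b)\otimes(a',b')=(a\otimes a',\, b\otimes b')$; that this is a morphism in $\AA\times_\CC\BB$ follows from the naturality of $\alpha^F$ and $\alpha^G$ together with the compatibility squares for $(a,b)$ and $(a',b')$. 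Functoriality of $\otimes$ is then immediate from functoriality of $\otimes$ in $\AA$ and $\BB$.

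For the unit, take $(\mathds{1}_\AA,\mathds{1}_\BB,c_\mathds{1})$, where $c_\mathds{1}\colon F(\mathds{1}_\AA)\to G(\mathds{1}_\BB)$ is the canonical isomorphism $F(\mathds{1}_\AA)\cong\mathds{1}_\CC\cong G(\mathds{1}_\BB)$ coming from the tensor functor axioms. For the associativity and commutativity constraints take $(\f^\AA_{A,A',A''},\f^\BB_{B,B',B''})$ and $(\psi^\AA_{A,A'},\psi^\BB_{B,B'})$; that these are morphisms in the fibre product is precisely the compatibility of the tensor functors $F$ and $G$ with the associativity and commutativity constraints, which is part of the tensor functor axioms. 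The three coherence conditions and the identity object axiom then reduce to their components in $\AA$ and $\BB$, where they hold by hypothesis. By construction the projections $P_1$ and $P_2$ are strict tensor functors (i.e., $\alpha^{P_i}$ is the identity).

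For rigidity, assume $\AA$, $\BB$, $\CC$ are all rigid. A standard fact about rigid tensor categories is that every tensor functor preserves duals: for each object $A$ of $\AA$ there is a canonical isomorphism $t^F_A\colon F(A^\vee)\xrightarrow{\sim} F(A)^\vee$, obtained by transporting $F(\operatorname{ev}_A)$ and $F(\delta_A)$ through $\alpha^F$ and invoking uniqueness of duals, and similarly $t^G_B\colon G(B^\vee)\xrightarrow{\sim} G(B)^\vee$. Given $(A,B,c)$, define its dual to be $(A^\vee,\, B^\vee,\, c^\vee_\CC)$, where
$$
c^\vee_\CC\colon F(A^\vee)\xrightarrow{t^F_A} F(A)^\vee \xrightarrow{(c^{-1})^\vee} G(B)^\vee \xrightarrow{(t^G_B)^{-1}} G(B^\vee).
$$
The evaluation and coevaluation morphisms are then defined componentwise as $(\operatorname{ev}_A,\operatorname{ev}_B)$ and $(\delta_A,\delta_B)$; that these lie in $\AA\times_\CC\BB$ follows from the compatibility of $F,G$ with evaluation and coevaluation, which is built into the canonical isomorphisms $t^F,t^G$. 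The two composites required for rigidity then reduce to their counterparts in $\AA$ and $\BB$ and hence are identities.

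The proof is essentially bookkeeping; the only thing one really has to verify at each step is that the structural morphism under consideration lies in $\AA\times_\CC\BB$, i.e.\ fits into the relevant commutative square relating its $F$-image and its $G$-image, and in every case this reduces to the tensor functor axioms for $F$ and $G$. The main conceptual input, needed for rigidity, is the standard fact that tensor functors between rigid tensor categories preserve duals canonically.
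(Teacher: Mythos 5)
Your proof is correct and follows essentially the same route as the paper's: the tensor structure, unit, constraints, and duals are all built componentwise, with the third coordinate transported through the tensor‑structure isomorphisms of $F$ and $G$ (the paper leaves these identifications implicit where you spell them out). No gaps.
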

\begin{proof}
Cf. the discussion on page 360 of \cite{Bass:AlgebraicKtheory}.
	We define the tensor product $\otimes \colon (\AA\times_\CC\BB) \times (\AA\times_\CC\BB)\to \AA\times_\CC\BB$ on objects by $$(A,B,c)\otimes (A',B',c')=(A\otimes A', B\otimes B', c\otimes c')$$ and on morphisms by $(a,b)\otimes (a',b')=(a\otimes a',b\otimes b')$. The associativity and commutativity constraints on $\AA$ and $\BB$ induce associativity and commutativity constraints on $\AA\times_\CC\BB$. The identity object of $\AA\times_\CC\BB$ is $(\mathds{1}_\AA,\mathds{1}_\BB, c)$, with $c\colon F(\mathds{1}_\AA)\to G(\mathds{1}_\BB)$ the unique isomorphism derived from the fact that $F(\mathds{1}_\AA)$ and $G(\mathds{1}_\BB)$ are identity objects of $\CC$.
	
	If $(A,B,c)$ is an object of $\AA\times_\CC\BB$ and $A^\vee$ and $B^\vee$ are duals of $A$ and $B$ respectively, then
	$(A^\vee,B^\vee, c^\vee)$ is a dual of $(A,B,c)$. Implicit in this notation are the identifications $F(A^\vee)\simeq F(A)^\vee$ and $G(B^\vee)\simeq G(B)^\vee$ derived from the fact that tensor functors preserve duals.
\end{proof}

A rigid tensor category $(\AA,\otimes)$ is \emph{abelian} if the underlying category $\AA$ is abelian. In this case $\otimes$ is automatically bi-additive (\cite[Prop. 1.16]{DeligneMilne:TannakianCategories}). 

A basic example of a tensor category is the category $\operatorname{Mod}_R$ of $R$-modules over a commutative ring $R$, with the usual tensor product and the usual associativity and commutativity constraints.
The most basic example of a rigid abelian tensor category is the category $\operatorname{Vec}_k$ of finite dimensional $k$-vector spaces. 

A \emph{neutral Tannakian category} over $k$ is a rigid abelian tensor category $(\AA,\otimes)$ together with an isomorphism $k\simeq \operatorname{End}(\mathds{1})$ such that there exists an exact $k$-linear tensor functor $\omega\colon\AA\to\operatorname{Vec}_k$. Any such $\omega$ is called a \emph{(neutral) fibre functor}. We note that by \cite[Cor. 2.10]{Deligne:categoriestannakien} a fibre functor is automatically faithful.
%

For tensor functors $F,G\colon (\AA,\otimes)\to (\BB,\otimes)$, a morphism $\alpha\colon F\to G$ of functors, is a \emph{morphism of tensor functors} if 
\begin{enumerate}
	\item $\alpha_{X\otimes Y}=\alpha_X\otimes\alpha_Y$ for any two objects $X, Y$ of $\AA$ under the identifications $F(X)\otimes F(Y)\simeq F(X\otimes Y)$ and $G(X)\otimes G(Y)\simeq G(X\otimes Y)$ and
	\item the diagram
	$$
	\xymatrix{
	& \mathds{1}_\BB \ar[ld] \ar[rd] & 	\\
	F(\mathds{1}_\AA) \ar^-{\alpha_{\mathds{1}_\AA}}[rr] & & G(\mathds{1_\AA})
	}
	$$
	commutes.	
\end{enumerate}

For a neutral Tannakian category $\AA$ with fibre functor $\omega\colon\AA\to\operatorname{Vec}_k$, one defines a functor  $\underline{\Aut}^\otimes(\omega)$ from the category of $k$-algebras to the category of groups by $\underline{\Aut}^\otimes(\omega)(R)=\Aut^\otimes(\omega_R)$, the group of tensor automorphisms of the tensor functor $\omega_R$, defined as the composition of $\omega$ with the tensor functor $\operatorname{Vec}_k\to \operatorname{Mod}_R,\ V\rightsquigarrow V\otimes_k R$ for any $k$-algebra $R$.

The standard example of a neutral Tannakian category is the category $\Rep(G)$ of finite dimensional $k$-linear representations of a $k$-group $G$ with the usual tensor product of representations and the forgetful functor $\omega_G\colon \Rep(G)\to \operatorname{Vec}_k$ as fibre functor. The main Tannaka reconstruction theorem shows that this is essentially the only example:

\begin{theo}[{\cite[Theorem 2.11]{DeligneMilne:TannakianCategories}}] \label{theo:DM}
	Let $(\AA,\otimes)$ be a neutral Tannakian category with fibre functor $\omega\colon \AA\to \operatorname{Vec}_k$. Then $G=\underline{\Aut}^\otimes(\omega)$ is a $k$-group and $\omega$ induces an equivalence of tensor categories $\AA\to \Rep(G)$.
\end{theo}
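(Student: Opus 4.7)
The plan is to follow the classical strategy of Deligne--Milne in three stages: reduce to finitely tensor-generated subcategories, handle each such subcategory as a closed subgroup of a general linear group, then pass to a pro-algebraic limit and check that the resulting functor is an equivalence.

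First I would write $\AA$ as the filtered $2$-colimit of its full tensor subcategories $\langle X\rangle^\otimes$ generated by a single object $X$, namely the smallest full rigid abelian tensor subcategories containing $X$. Their objects are subquotients of finite direct sums of tensor constructions $X^{\otimes a}\otimes (X^\vee)^{\otimes b}$. The would-be $k$-group $G$ should then be the cofiltered inverse limit of $k$-groups $G_X$ representing $\underline{\operatorname{Aut}}^\otimes(\omega|_{\langle X\rangle^\otimes})$, and representability of $G$ as an affine group scheme would follow from representability of each $G_X$ as a linear algebraic group.

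For the finitely generated case I would realize $G_X$ as a closed subgroup of $\operatorname{GL}(\omega(X))$ cut out by the requirement that $g\in \operatorname{GL}(\omega(X))(R)$ commute, after the natural functorial extension to all tensor constructions, with $\omega(f)\otimes R$ for every morphism $f$ in $\langle X\rangle^\otimes$. Fully faithfulness of the resulting functor $\langle X\rangle^\otimes\to \Rep(G_X)$ is essentially tautological from the definition of $G_X$ together with the faithfulness of $\omega$. Essential surjectivity is the crux: one must show that every finite-dimensional $G_X$-representation occurs as a subquotient of some $\omega(X)^{\otimes a}\otimes \omega(X)^{\vee\otimes b}$. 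This is Chevalley's theorem for algebraic subgroups of $\operatorname{GL}_n$ combined with the stabilizer description of $G_X$, and is the point where rigidity is genuinely used (to guarantee that duals of generators are available).

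Having the equivalences $\langle X\rangle^\otimes \simeq \Rep(G_X)$ compatibly in $X$, I would set $k[G]=\varinjlim_X k[G_X]$ and $G=\operatorname{Spec} k[G]$, which is the desired affine group scheme, and note that $\Rep(G)=\varinjlim_X\Rep(G_X)$, so passing to the $2$-colimit yields the equivalence $\omega\colon\AA\to\Rep(G)$ of tensor categories. A functorially cleaner variant, which I would mention but not carry out, is to bypass the matrix reduction altogether: set $A=\int^{X\in\AA}\omega(X)^\vee\otimes\omega(X)$ (the coend co-representing endomorphisms of $\omega$), endow it with a Hopf algebra structure using $\otimes$ on $\AA$ and rigidity for the antipode, and check directly that $G=\operatorname{Spec} A$ equals $\underline{\operatorname{Aut}}^\otimes(\omega)$. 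The hardest step in either route is the same: extracting, from abstract rigidity and exactness of $\omega$, enough information to reconstruct \emph{all} representations of $G_X$ from the image of $\langle X\rangle^\otimes$; without rigidity one would recover only a bialgebra and a monoid scheme, and without exactness of $\omega$ only a coarser tensor equivalence.
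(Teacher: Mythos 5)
The paper does not prove this statement at all: it is quoted verbatim from Deligne--Milne \cite[Theorem 2.11]{DeligneMilne:TannakianCategories}, so the only comparison available is with the classical proof you are reconstructing. Your roadmap (filter by tensor-generated subcategories $\langle X\rangle^{\otimes}$, realize each dual group inside $\operatorname{GL}(\omega(X))$, pass to the pro-algebraic limit) is the standard one, and your closing remarks about where rigidity and exactness of $\omega$ enter are accurate.

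There is, however, a genuine gap in the middle step. You assert that full faithfulness of $\langle X\rangle^{\otimes}\to \operatorname{Rep}(G_X)$ is ``essentially tautological from the definition of $G_X$ together with the faithfulness of $\omega$.'' Faithfulness is indeed immediate, but \emph{fullness} is not: given a $G_X$-equivariant map $\varphi\colon\omega(Y)\to\omega(Z)$, to produce $f$ with $\omega(f)=\varphi$ one passes to the graph of $\varphi$, a $G_X$-stable subspace of $\omega(Y\oplus Z)$, and one must know that every $G_X$-stable subspace of $\omega(W)$ has the form $\omega(W')$ for a subobject $W'\subseteq W$. That is exactly the same hard statement you defer to the ``essential surjectivity'' step, so your claimed logical order is circular: fullness and essential surjectivity both rest on the one nontrivial lemma, and neither is free. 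In Deligne--Milne this lemma is the content of the comodule-recognition argument (their Prop.~2.14 and Lemma~2.13, i.e.\ your ``coend variant''): one first identifies $\langle X\rangle^{\otimes}$ with finite-dimensional comodules over the coalgebra $A_X=\operatorname{End}(\omega|_{\langle X\rangle^{\otimes}})^{\vee}$, which is where stable subspaces are shown to be subobjects; only afterwards does one use the tensor structure for the bialgebra structure and rigidity for the antipode. Your invocation of Chevalley's theorem also points the wrong way: Chevalley realizes a given closed subgroup as a stabilizer of a line in a tensor construction, whereas what is needed here is that every representation of a closed $G\subseteq\operatorname{GL}(V)$ is a subquotient of sums of $V^{\otimes a}\otimes (V^{\vee})^{\otimes b}$ \emph{and} that such subquotients descend to $\langle X\rangle^{\otimes}$ --- again the same missing lemma. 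Until that lemma is supplied, the argument does not close.
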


The $k$-group $G_\AA=\underline{\Aut}^\otimes(\omega)$ is called the \emph{fundamental group} of $\AA$ or the \emph{Tannakian dual} of $\AA$.
A \emph{neutralized Tannakian category} is a pair $(\AA,\omega_\AA)$ consisting of a neutral Tannakian category $\AA$ and a fibre functor $\omega_\AA\colon \AA\to \operatorname{Vec}_k$. Abusing notation, a \emph{tensor functor} $(F,\gamma)\colon (\AA,\omega_\AA)\to (\BB,\omega_\BB)$ between neutralized Tannakian categories is a pair $(F,\gamma)$ consisting of a $k$-linear tensor functor $F\colon \AA\to \BB$ and an isomorphism $\gamma\colon \omega_\BB F\simeq \omega_\AA$ of tensor functors. Note that this implies that $F$ is exact since $\omega_\AA$ and $\omega_\BB$ are exact.
Such a tensor functor induces a morphism $\varphi\colon \underline{\Aut}^\otimes(\omega_\BB)\to \underline{\Aut}^\otimes(\omega_\AA)$
of $k$-groups as follows: For every $k$-algebra $R$, an element $g\in\underline{\Aut}^\otimes(\omega_\BB)(R)=\Aut^\otimes(\omega_{\BB,R})$ gives rises to a tensor automorphism $g\star\id_F\colon  \omega_{\BB,R}F\simeq \omega_{\BB,R}F$, that yields a tensor automorphism of $\omega_{\AA,R}$ via the tensor isomorphism $\gamma_R=\id_{-\otimes R}\star\gamma\colon \omega_{\BB,R}F\simeq\omega_{\AA,R}$, i.e., $\varphi_R(g)=\gamma_R (g\star \id_F)\gamma_R^{-1}$.
 Note that in the context of Theorem \ref{theo:DM} we have an isomorphism $(\AA,\omega)\to (\Rep(G),\omega_G)$ of neutralized Tannakian categories.

The composition $(G,\delta)(F,\gamma)$ of two tensor functors  $(F,\gamma)\colon (\AA,\omega_\AA)\to (\BB,\omega_\BB)$ and $(G,\delta)\colon (\BB,\omega_\BB)\to (\CC,\omega_\CC)$ is defined as  $(G,\delta)(F,\gamma)=(GF,\omega_\CC G F\xrightarrow{\delta\star\id_{F}}\omega_\BB F\xrightarrow{\gamma}\omega_\AA)$.

Let $(F,\gamma), (F',\gamma')\colon   (\AA,\omega_\AA)\to (\BB,\omega_\BB)$ be tensor functors. An isomorphism between $(F,\gamma)$ and $(F',\gamma')$ is an isomorphism $\delta\colon F\to F'$ of tensor functors such that 
$$
\xymatrix{
\omega_\BB F \ar^-{\id_{\omega_\BB}\star\delta}[rr] \ar_-{\gamma}[rd] & & \omega_\BB F' \ar^-{\gamma'}[ld] \\
& \omega_\AA &	
}
$$
commutes. Note that isomorphic $(F,\gamma)$, $(F',\gamma')$ induce the same morphism $\underline{\Aut}^\otimes(\omega_\BB)\to \underline{\Aut}^\otimes(\omega_\AA)$.

Morally, the category of $k$-groups is anti-equivalent to the category of neutralized Tannakian categories over $k$. In reality, this is not quite true since a category of categories should be regarded as a $2$-category. The following precise statement (\cite[Cor. 3.2]{OvchinnikovWibmer:TannakianCategoriesWithSemigroupActions}) is sufficient for our purposes.

\begin{lemma} \label{lemma: morphisms of groups and isom classes of functors}
	Let $(\AA,\omega_\AA)$ and $(\BB,\omega_\BB)$ be neutralized Tannakian categories with fundamental groups $G_\AA$ and $G_\BB$ respectively. Then $\Hom(G_\BB,G_\AA)$ is in bijection with the isomorphism classes of tensor functors $(\AA,\omega_\AA)\to(\BB,\omega_\BB)$.
\end{lemma}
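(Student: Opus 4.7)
The plan is to reduce to the standard case where $\AA=\Rep(G_\AA)$ and $\BB=\Rep(G_\BB)$ by means of Theorem \ref{theo:DM}, and then to exhibit explicit mutually inverse assignments. In one direction, a morphism $\varphi\colon G_\BB\to G_\AA$ of $k$-groups yields the restriction functor $\Rep(\varphi)\colon \Rep(G_\AA)\to \Rep(G_\BB)$; since both fibre functors $\omega_{G_\AA}$ and $\omega_{G_\BB}$ are the forgetful functor to $\Vec_k$, the underlying $k$-vector space does not change under restriction, so we obtain a tensor functor $(\Rep(\varphi),\id)\colon (\Rep(G_\AA),\omega_{G_\AA})\to (\Rep(G_\BB),\omega_{G_\BB})$. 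In the other direction, the construction recalled just before Theorem~\ref{theo:DM} sends a tensor functor $(F,\gamma)$ to a morphism $\varphi_{(F,\gamma)}\colon G_\BB\to G_\AA$ of $k$-groups, and this morphism depends only on the isomorphism class of $(F,\gamma)$ by the observation made immediately after the definition of isomorphism of tensor functors.

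The next step is to verify that the two assignments are mutually inverse. That $\varphi$ is recovered from $(\Rep(\varphi),\id)$ is a direct unwinding of the definitions: for a $k$-algebra $R$ and $g\in G_\BB(R)$, the tensor automorphism $g\star\id_{\Rep(\varphi)}$ of $\omega_{G_\BB,R}\circ\Rep(\varphi)=\omega_{G_\AA,R}$ is, on a representation $V$, the action of $\varphi_R(g)$ on $V\otimes_k R$, which coincides with the definition of the induced morphism. The converse, surjectivity of the second assignment up to isomorphism, is the main obstacle: given $(F,\gamma)$ with induced morphism $\varphi=\varphi_{(F,\gamma)}$, we must produce an isomorphism $(F,\gamma)\simeq(\Rep(\varphi),\id)$ of tensor functors.

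For this one proceeds as follows. For each object $V$ of $\Rep(G_\AA)$, the component $\gamma_V\colon \omega_{G_\BB}(F(V))\xrightarrow{\sim} \omega_{G_\AA}(V)=V$ is an isomorphism of $k$-vector spaces, which transports the $G_\BB$-action on $F(V)$ to a $G_\BB$-action on $V$. By construction of $\varphi$, this transported action is precisely the action of $G_\BB$ on $V$ through $\varphi$, so $\gamma_V$ becomes an isomorphism $F(V)\xrightarrow{\sim}\Rep(\varphi)(V)$ in $\Rep(G_\BB)$. The compatibilities of $(F,\gamma)$ as a tensor functor (with respect to the associativity, commutativity and unit constraints as well as with the fibre functor isomorphisms) translate directly into the statement that the family $\{\gamma_V\}_V$ is a tensor isomorphism $F\simeq \Rep(\varphi)$ making the triangle with $\omega_{G_\AA}$ and $\omega_{G_\BB}$ commute, i.e.\ an isomorphism $(F,\gamma)\simeq(\Rep(\varphi),\id)$ of tensor functors between neutralized Tannakian categories. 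The essential content is the faithfulness of $\omega_{G_\AA}$, ensuring that the $G_\BB$-equivariance transported via $\gamma_V$ is detected on the underlying vector spaces; once this is noted, the verification of the coherence diagrams is routine.
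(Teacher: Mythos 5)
Your argument is correct, but it is worth noting that the paper does not actually prove this lemma: it simply cites \cite[Cor.~3.2]{OvchinnikovWibmer:TannakianCategoriesWithSemigroupActions}. What you have written is a self-contained proof of that cited statement, and it is essentially the standard Tannakian argument. Both directions check out: recovering $\varphi$ from $(\Rep(\varphi),\id)$ is the unwinding you describe, and for the converse the key identity is that, by the very definition $\varphi_R(g)=\gamma_R(g\star\id_F)\gamma_R^{-1}$, the component of $\varphi_R(g)$ on $\omega_{G_\AA}(V)\otimes R$ is $\gamma_V\, g_{F(V)}\,\gamma_V^{-1}$, which says exactly that $\gamma_V$ intertwines the two $G_\BB$-actions. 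Two small points of precision. First, the justification that the transported structure on $V$ \emph{is} the comodule structure of $\Rep(\varphi)(V)$ is not really the faithfulness of $\omega_{G_\AA}$; it is the fact that a $k$-linear map between comodules that is $G_\BB(R)$-equivariant for every $k$-algebra $R$ (equivalently, for the universal point over $R=k[G_\BB]$) is a comodule morphism — that is the statement you should invoke. Second, your reduction to $\Rep(G_\AA)$ and $\Rep(G_\BB)$ tacitly uses that under the equivalence of Theorem~\ref{theo:DM} the tautological action of $G_\BB=\underline{\Aut}^\otimes(\omega_\BB)$ on $\omega_\BB(W)$ is the representation structure of the object corresponding to $W$; this is part of the reconstruction theorem and is harmless, but it is the place where the identification $\Aut^\otimes(\omega_{\BB,R})=G_\BB(R)$ enters. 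With those two remarks your proof is complete and gives exactly what the paper imports from the reference.
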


As one may expect, the fibre product of neutralized Tannakian categories inherits all of the structure:

\begin{lemma} \label{lemma: tensor on fibre product neutralized Tannakian categories}
	Let $(F,\gamma)\colon (\AA,\omega_{\AA})\to (\CC,\omega_{\CC})$ and $(G,\delta)\colon (\BB,\omega_{\BB})\to(\CC,\omega_{\CC})$ be tensor functors between neutralized Tannakian categories. Then $\AA\times_\CC\BB$ inherits the structure of a neutralized Tannakian category with fibre functor $\omega_{\AA\times_\CC\BB}$ given by  $\omega_{\AA\times_\CC\BB}(A,B,c)=\omega_\AA(A)$.
\end{lemma}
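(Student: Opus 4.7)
The strategy is to assemble the ingredients already prepared: Lemma~\ref{lemma: fibre product of abelian} makes $\AA\times_\CC\BB$ an abelian category, and Lemma~\ref{lemma: tensor on fibre product of tensor categories} supplies a rigid tensor structure for which the projections $P_1, P_2$ are tensor functors. Both lemmas apply: by definition, the tensor functors $(F,\gamma)$ and $(G,\delta)$ between neutralized Tannakian categories are $k$\=/linear, and as noted after Lemma~\ref{lemma: morphisms of groups and isom classes of functors}, they are automatically exact since $\omega_\AA$, $\omega_\BB$, $\omega_\CC$ are so. Thus the only genuine tasks are to identify $\operatorname{End}(\mathds{1})$ with $k$ and to verify that the prescribed $\omega_{\AA\times_\CC\BB}$ is a fibre functor.

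First I would identify the endomorphism ring of the unit object $(\mathds{1}_\AA,\mathds{1}_\BB,c)$. An endomorphism is a pair $(a,b)$ with $a\in\operatorname{End}(\mathds{1}_\AA)$ and $b\in\operatorname{End}(\mathds{1}_\BB)$ satisfying $G(b)\circ c=c\circ F(a)$. Using $\operatorname{End}(\mathds{1}_\AA)=k=\operatorname{End}(\mathds{1}_\BB)$ and the $k$\=/linearity of $F$ and $G$ (so that $F(a)$ and $G(b)$ act as the scalars $a$ and $b$ on the identity objects of $\CC$), the compatibility condition forces $a=b$. This yields the required isomorphism $k\simeq \operatorname{End}(\mathds{1})$.

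Next I would define
$$\omega_{\AA\times_\CC\BB}\;:=\;\omega_\AA\circ P_1\colon \AA\times_\CC\BB\longrightarrow \operatorname{Vec}_k,$$
which clearly sends $(A,B,c)$ to $\omega_\AA(A)$ as claimed. Since $P_1$ is a tensor functor by Lemma~\ref{lemma: tensor on fibre product of tensor categories} and $\omega_\AA$ is a $k$\=/linear tensor functor, the composition is a $k$\=/linear tensor functor. For exactness, I would invoke the explicit componentwise construction of kernels and cokernels in the proof of Lemma~\ref{lemma: fibre product of abelian}: it shows $P_1$ is exact, and $\omega_\AA$ is exact by assumption, hence so is $\omega_{\AA\times_\CC\BB}$.

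Combining these steps shows $(\AA\times_\CC\BB,\omega_{\AA\times_\CC\BB})$ is a neutralized Tannakian category. The only mildly delicate point is the computation of $\operatorname{End}(\mathds{1})$, where one must be careful about the defining commutativity square in the fibre product; everything else is a direct transcription of the previously established lemmas.
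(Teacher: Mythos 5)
Your proposal is correct and follows essentially the same route as the paper: both assemble Lemma~\ref{lemma: fibre product of abelian} and Lemma~\ref{lemma: tensor on fibre product of tensor categories}, identify $\operatorname{End}(\mathds{1}_{\AA\times_\CC\BB})$ with $k$ via the compatibility condition on a pair $(a,b)$ of scalars, and take $\omega_{\AA\times_\CC\BB}=\omega_\AA\circ P_1$ as the exact $k$\=/linear tensor fibre functor. Your explicit justification that $F(a)$ and $G(b)$ act as scalars on the identity object of $\CC$ is a slightly more detailed rendering of the paper's remark that ``$a$ and $b$ agree as elements of $k$,'' but the argument is the same.
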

\begin{proof}
We already saw in Lemma \ref{lemma: tensor on fibre product of tensor categories} that $\AA\times_\CC \BB$ is naturally a rigid tensor category.
As $F$ and $G$ are exact and additive, Lemma \ref{lemma: fibre product of abelian} shows that $\AA\times_\CC\BB$ is abelian.

The map $k\to\operatorname{End}(\mathds{1}_{\AA\times_\CC\BB})$ is an isomorphism because an endomorphism of $\mathds{1}_{\AA\times_\CC\BB}=(\mathds{1}_\AA,\mathds{1}_\BB,c)$ is a pair $(a,b)$ with $a\in\operatorname{End}(\mathds{1}_\AA)\simeq k$ and $b\in \operatorname{End}(\mathds{1}_\BB)\simeq k$ such that $a$ and $b$ agree as elements of~$k$.
%

Note that the projection $P_1\colon \AA\times_\CC\BB\to \AA$ is an exact $k$-linear tensor functor (the implied isomorphism $P_1(-)\otimes P_1( -)\simeq P_1(-\otimes -)$ is the identity). Therefore, also the composition 
 $\omega_{\AA\times_\CC\BB}$ of $\omega$ with $P_1$ is an exact $k$-linear tensor functor. 
\end{proof}

\begin{rem}
	There are four possibilities to define the fibre functor $\omega_{\AA\times_\CC\BB}$ in Lemma \ref{lemma: tensor on fibre product neutralized Tannakian categories}: 
	On an object $(A,B,c)$ of $\AA\times_\CC\BB$ we can define it as $\omega_\AA(A)$, $\omega_\BB(B)$, $\omega_\CC(F(A))$ or $\omega_\CC(G(B))$. The choice is irrelevant, since all four functors are isomorphic.
\end{rem}

\subsection{Amalgamated free products}

In this section we recall the definition of amalgamated free products and discuss their relation with fibre products of neutralized Tannakian categories.

Let $\f_1\colon H\to G_1$ and $\f_2\colon H\to G_2$ be morphisms of $k$-groups. In line with the common notation in group theory, we denote a pushout of $\f_1$ and $\f_2$ in the category of $k$-groups with $G_1*_H G_2$ and call it a \emph{free product of $G_1$ and $G_2$ with amalgamation over $H$} or simply an \emph{amalgamated free product}. So $G_1*_H G_2$ comes with two morphisms $i_1\colon G_1\to G_1*_H G_2$ and $i_2\colon G_2\to G_1*_H G_2$ such that  
$$
\xymatrix{
& G_1*_H G_2 & \\
G_1 \ar^-{i_1}[ru] & & G_2 \ar_-{i_2}[lu] \\
& H \ar^{\f_1}[lu] \ar_{\f_2}[ru] &
}
$$
commutes and satisfies the following universal property: If $j_1\colon G_1\to G$ and $j_2\colon G_2\to G$ are morphisms of $k$-groups such that $j_1\f_1=j_2\f_2$, then there exists a unique morphism $G_1*_H G_2\to G$ such that 

$$
\xymatrix{
	G_1 \ar^-{i_1}[r] \ar_-{j_1}[rd] & G_1*_H G_2 \ar@{..>}[d] & G_2 \ar_-{i_2}[l] \ar^{j_2}[ld] \\
	& G &	
}
$$
commutes. 
For $H=1$ the trivial group, one obtains the \emph{free product} $G_1*G_2$ of $G_1$ and $G_2$.

  It is known  (\cite[Prop. 22 (2)]{Porst:TheFormalTheoryOfHopfAlgebrasPartII}) that the category of (commutative) Hopf algebras over a field is complete. In particular, it has fibre products. Therefore amalgamated free products exist. Beyond this existence result, the only literature on amalgamated free products of $k$-groups we could find is \cite{Antei:PushoutOfQuasiFiniteAndFlatGroupschemesOverADedekindRing}, which discusses amalgamated free products of finite flat group schemes over complete discrete valuation rings. We will provide two existence proofs for amalgamated free products of $k$-groups independent from \cite{Porst:TheFormalTheoryOfHopfAlgebrasPartII}. The first one (Corollary \ref{t1-12}), based on the fibre product of neutralized Tannakian categories, leads to a proof of the Seifert-van Kampen Theorem for the proalgebraic fundamental group rather directly. The second one (Proposition~\ref{prop: fibre product of Hopf algebras}), based on an explicit construction with Hopf algebras, is helpful for determining properties of the amalgamated free product.


We next discuss the universal property of the fibre product constructed in Lemma \ref{lemma: tensor on fibre product neutralized Tannakian categories}.
As before, let $(F,\gamma)\colon (\AA,\omega_{\AA})\to (\CC,\omega_{\CC})$ and $(G,\delta)\colon (\BB,\omega_{\BB})\to(\CC,\omega_{\CC})$ be tensor functors between neutralized Tannakian categories. We consider $\AA\times_\CC\BB$ as a neutralized Tannakian category as described in Lemma \ref{lemma: tensor on fibre product neutralized Tannakian categories}
The projections $P_1\colon \AA\times_\CC\BB\to \AA$ and  $P_2\colon \AA\times_\CC\BB\to \BB$ are $k$-linear tensor functors.

%

In fact, $P_1$ and $P_2$ can be regarded as tensor functors between neutralized Tannakian categories as follows. With $\chi_1=\id\colon \omega_\AA P_1\to\omega_{\AA\times_\CC\BB}$ we obtain a tensor functor $(P_1,\chi_1)\colon (\AA\times_\CC\BB,\omega_{\AA\times_\CC\BB})\to (\AA,\omega_\AA)$ between neutralized Tannakian categories. We define an isomorphism $\chi_2\colon \omega_\BB P_2\to \omega_{\AA\times_\CC\BB}$ by
$$\chi_{2,(A,B,c)}\colon (\omega_{\BB}P_2)(A,B,c)=\omega_\BB(B)\xrightarrow{\delta_B^{-1}}\omega_\CC(G(B))\xrightarrow{\omega_\CC(c)^{-1}}\omega_\CC(F(A))\xrightarrow{\gamma_A}\omega_{\AA}(A)=\omega_{\AA\times_\CC\BB}(A,B,c)$$
for any object $(A,B,c)$ of $\AA\times_\CC\BB$. Then also $(P_2,\chi_2)\colon (\AA\times_\CC\BB,\omega_{\AA\times_\CC\BB})\to (\BB,\omega_\BB)$ is a tensor functor between neutralized Tannakian categories. The isomorphism $\psi\colon FP_1\to GP_2$ is an isomorphism of tensor functors $(F,\gamma)(P_1,\chi_1)\to(G,\delta)(P_2,\chi_2)$ since the diagram
$$ 
\xymatrix{
\omega_\CC F P_1 \ar^-{\id_{\omega_\CC}\star\psi}[rr] \ar_-{\gamma\star\id_{P_1}}[d] & & \omega_\CC G P_2 \ar^-{\delta\star\id_{P_2}}[d] \\
\omega_\AA P_1 \ar_-{\chi_1}[rd] &  & \omega_\BB P_2 \ar^{\chi_2}[ld] \\
& \omega_{\AA\times_\CC\BB} &
}
$$
commutes by definition of $\chi_2$.

To state the universal property of the fibre product  $(\AA\times_\CC\BB,\omega_{\AA\times_\CC\BB})$ we need $2$-commutative diagrams of neutralized Tannakian categories.
Let $(F,\gamma)\colon (\AA,\omega_{\AA})\to (\CC,\omega_{\BB})$ and $(G,\delta)\colon (\BB,\omega_{\BB})\to(\CC,\omega_{\CC})$ be tensor functors between neutralized Tannakian categories. A $2$-commutative diagram (with respect to $(F,\gamma)$ and $(G,\delta)$) is a quadruple $((\DD,\omega_\DD), (Q_1,\epsilon_1), (Q_2,\epsilon_2), \alpha)$ consisting of a neutralized Tannakian category $(\DD,\omega_\DD)$, tensor functors $(Q_1,\epsilon_1)\colon (\DD,\omega_\DD)\to (\AA,\omega_\AA)$ and $(Q_2,\epsilon_2)\colon (\DD,\omega_\DD)\to (\BB,\omega_\BB)$ and an isomorphism $\alpha\colon (F,\gamma)(Q_1,\epsilon_1)\to (G,\delta)(Q_2,\epsilon_2)$ of tensor functors.

A morphism $$((H,\eta),\beta_1,\beta_2)\colon ((\DD,\omega_\DD), (Q_1,\epsilon_1), (Q_2,\epsilon_2), \alpha)\to ((\DD',\omega_{\DD'}), (Q'_1,\epsilon'_1), (Q'_2,\epsilon'_2), \alpha')$$ of $2$\=/commutative diagrams consists of a tensor functor $(H,\eta)\colon (\DD,\omega_{\DD})\to (\DD',\omega_{\DD'})$ and isomorphisms
$\beta_1\colon (Q_1,\epsilon_1)\to (Q'_1,\epsilon_1')(H,\eta)$ and $\beta_2\colon (Q_2,\epsilon_2)\to (Q'_2,\epsilon_2')(H,\eta)$ of tensor functors such that
$$
\xymatrix{
	FQ_1 \ar[r]^-{\id_F\star\beta_1} \ar_-{\alpha}[d] & FQ_1'H \ar^-{\alpha'\star\id_H}[d] \\
	GQ_2 \ar[r]^-{\id_G\star\beta_2} & GQ_2'H	
}
$$
commutes. Finally, an isomorphism between two morphisms $((H,\eta),\beta_1,\beta_2)$, $((H',\eta'),\beta'_1,\beta'_2)$ of $2$-commutative diagrams with the same source and target, is an isomorphism $\theta\colon (H,\eta)\to (H',\eta')$ of tensor functors  such that $$
\xymatrix{
	Q_1 \ar^-{\beta_1}[r] \ar_{\beta_1'}[rd] & Q_1'H \ar^-{\id_{Q_1'}\star\theta}[d] \\
	& Q_1'H'	
}
\quad \text{ and } \quad
\xymatrix{
	Q_2 \ar^-{\beta_2}[r] \ar_{\beta_2'}[rd] & Q_2'H \ar^-{\id_{Q_2'}\star\theta}[d] \\
	& Q_2'H'	
}
$$
commute.

%

\begin{prop} \label{prop: fibre product Tannakian categories}
		Let $(F,\gamma)\colon (\AA,\omega_{\AA})\to (\CC,\omega_{\BB})$ and $(G,\delta)\colon (\BB,\omega_{\BB})\to(\CC,\omega_{\CC})$ be tensor functors between neutralized Tannakian categories. 
	Then $((\AA\times_\CC\BB,\omega_{\AA\times_\CC\BB}), (P_1,\chi_1), (P_2, \chi_2), \psi)$ satisfies the following universal property. For every $2$-commutative diagram $((\DD,\omega_\DD), (Q_1,\epsilon_1), (Q_2,\epsilon_2), \alpha)$ of neutralized Tannakian categories, there exists a morphism 
		\begin{equation} \label{eq: long morph}
	((H,\eta),\beta_1,\beta_2)\colon ((\DD,\omega_\DD), (Q_1,\epsilon_1),(Q_2,\epsilon_2), \alpha)\to ((\AA\times_\CC\BB,\omega_{\AA\times_\CC\BB}), (P_1,\chi_1), (P_2, \chi_2), \psi)
	\end{equation} 
	of $2$-commutative diagrams. If $((H',\eta'),\beta'_1,\beta'_2)$ is another such morphism, then there exists an isomorphism between $((H,\eta),\beta_1,\beta_2)$ and $((H',\eta'),\beta'_1,\beta'_2)$.
\end{prop}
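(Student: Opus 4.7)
The plan is to bootstrap on Lemma~\ref{lemma: universal property of fibre product}, which at the level of plain categories already provides a functor $H$ with $H(D)=(Q_1(D),Q_2(D),\alpha_D)$ and identity natural transformations $\beta_1=\id_{Q_1}$, $\beta_2=\id_{Q_2}$. What remains is to promote $H$ to a tensor functor, to equip it with a neutralization $\eta$, and to upgrade $\beta_1,\beta_2$ to isomorphisms of tensor functors between neutralized Tannakian categories.

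First I would equip $H$ with a tensor structure. For $D,D'\in\DD$, the structure isomorphisms $Q_1(D)\otimes Q_1(D')\simeq Q_1(D\otimes D')$ and $Q_2(D)\otimes Q_2(D')\simeq Q_2(D\otimes D')$ form a pair; the fact that $\alpha\colon FQ_1\to GQ_2$ is a morphism of tensor functors is exactly the statement that this pair defines a morphism in $\AA\times_\CC\BB$ from $H(D)\otimes H(D')$ to $H(D\otimes D')$. As its two components are already isomorphisms, so is the pair (kernels and cokernels in the fibre product are computed componentwise, cf.\ Lemma~\ref{lemma: fibre product of abelian}). A parallel argument, using compatibility of $\alpha$ with identity objects, yields the compatibility of $H$ with $\mathds{1}$. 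The coherence axioms descend componentwise from those of $Q_1$ and $Q_2$.

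Next I would set $\eta=\epsilon_1\colon\omega_{\AA\times_\CC\BB}H\to\omega_\DD$, which is legal because $\omega_{\AA\times_\CC\BB}(H(D))=\omega_\AA(Q_1(D))$ by construction. Since $\chi_1=\id$, the identity $\beta_1=\id_{Q_1}$ automatically becomes an isomorphism $(Q_1,\epsilon_1)\to(P_1,\chi_1)(H,\eta)$ of tensor functors between neutralized Tannakian categories. For $\beta_2$ the required condition is $\epsilon_2=\eta\circ(\chi_2\star\id_H)$, which, after substituting the explicit formula for $\chi_{2,H(D)}$ from the paragraph preceding the proposition, becomes
\[
\epsilon_2\;=\;\epsilon_1\circ(\gamma\star\id_{Q_1})\circ(\id_{\omega_\CC}\star\alpha^{-1})\circ(\delta^{-1}\star\id_{Q_2}).
\]
This is nothing but the rewritten condition that $\alpha$ is an isomorphism $(F,\gamma)(Q_1,\epsilon_1)\to(G,\delta)(Q_2,\epsilon_2)$ of tensor functors between neutralized Tannakian categories, so it holds by hypothesis. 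The compatibility of $(H,\eta,\beta_1,\beta_2)$ with $\alpha$ and $\psi$ is the same square already verified in Lemma~\ref{lemma: universal property of fibre product}.

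For the uniqueness clause, given another morphism $((H',\eta'),\beta_1',\beta_2')$, the uniqueness part of Lemma~\ref{lemma: universal property of fibre product} produces a unique natural isomorphism $\theta\colon H\to H'$ with $\theta_D=(\beta_{1,D}',\beta_{2,D}')$. I would then check that $\theta$ is a tensor isomorphism, which reduces componentwise to $\beta_1',\beta_2'$ being tensor isomorphisms, and that $\theta$ intertwines $\eta$ with $\eta'$, which reduces to the compatibility of $\beta_1'$ with $\epsilon_1,\chi_1,\eta,\eta'$. I expect the main obstacle to be purely notational: the verification that the rather involved $\chi_2$ interacts correctly with $\epsilon_1,\epsilon_2,\gamma,\delta$ under the hypothesis on $\alpha$. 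No genuinely new input beyond Lemma~\ref{lemma: universal property of fibre product} and the explicit construction of $(P_1,\chi_1),(P_2,\chi_2),\psi$ seems to be required.
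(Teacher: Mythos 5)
Your proposal is correct and follows essentially the same route as the paper's proof: promote the $H$ of Lemma~\ref{lemma: universal property of fibre product} to a tensor functor via the tensor structures on $Q_1,Q_2$, take $\eta=\epsilon_1$ and $\beta_1,\beta_2$ the identities, and observe that the identity $(P_2,\chi_2)(H,\eta)=(Q_2,\epsilon_2)$ is exactly the condition that $\alpha$ is an isomorphism of tensor functors between neutralized Tannakian categories, unwound through the definition of $\chi_2$. The uniqueness argument via the $\theta$ of Lemma~\ref{lemma: universal property of fibre product} also matches the paper.
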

\begin{proof}
	The functor $H\colon \DD\to \AA\times_\CC\BB$ defined in Lemma \ref{lemma: universal property of fibre product} can be considered as a tensor functor with functorial isomorphism $H(-)\otimes H(-)\simeq H(-\otimes -)$ induced by the functorial ismorphisms $Q_i(-)\otimes Q_i(-)\simeq Q_i(-\otimes -)$ for $i=1,2$. Setting $\eta\colon \omega_{\AA\times_\CC\BB}H=\omega_\AA Q_1\xrightarrow{\epsilon_1}\omega_\DD$ we obtain a tensor functor $(H,\eta)\colon (\DD,\omega_\DD)\to (\AA\times_{\CC}\BB,\omega_{\AA\times_\CC\BB})$ between neutralized Tannakian categories. As $(P_1,\chi_1)(H,\eta)=(Q_1,\epsilon_1)$, the identity $\beta_1\colon (Q_1,\epsilon_1)\to (P_1,\chi_1)(H,\eta)$ is a morphism of tensor functors.
	By definition of $\chi_2$, the diagram
	$$\xymatrix{
	\omega_\CC FP_1 \ar^-{\id_{\omega_\CC}\star\psi}[r] \ar_-{\gamma\star\id_{P_1}}[d] & \omega_\CC GP_2 \ar^-{\delta\star\id_{P_2}}[d] \\
	\omega_\AA P_1 & \omega_\BB P_2 \ar_-{\chi_2}[l]
	}
	$$
	commutes. Therefore, the upper square in
	\begin{equation} \label{eq: diag}
		\xymatrix{
		\omega_\CC FP_1H \ar^-{\id_{\omega_\CC}\star\psi\star\id_H}[rr] \ar_-{\gamma\star\id_{P_1H}}[d]  &  & \omega_\CC GP_2H \ar^-{\delta\star\id_{P_2H}}[d] \\
		\omega_\AA P_1H \ar_-{\epsilon_1}[rd] &  & \omega_\BB P_2H \ar_-{\chi_2\star\id_H}[ll] \ar^-{\epsilon_2}[ld] \\
		& \omega_\DD&			
	}
	\end{equation}
	commutes. Noting that $P_1H=Q_1$, $P_2H=Q_2$ and $\psi\star\id_H=\alpha$, we see that the outer diagram in (\ref{eq: diag}) commutes because $\alpha\colon (F,\gamma)(Q_1,\epsilon_1)\to (G,\delta)(Q_2,\epsilon_2)$ is an isomorphism of tensor functors. Thus also the lower triangle in (\ref{eq: diag}) commutes. This shows that $(P_2,\chi_2)(H,\eta)=(Q_2,\epsilon_2)$. Thus the identity $\beta_2\colon (Q_2,\epsilon_2)\to (P_2,\chi_2)(H,\eta)$ is an isomorphism of tensor functors. We have thus constructed the morphism (\ref{eq: long morph}).
	
	If $((H',\eta'),\beta_1',\beta_2')$ is another such morphism, the isomorphism $\theta\colon H\to H'$ constructed in Lemma~\ref{lemma: universal property of fibre product} is a tensor isomorphism because $\beta_1'\colon Q_1\to P_1H'$ and $\beta_2'\colon Q_2\to P_2H'$ are tensor isomorphisms. 
	Moreover, the fact that $\beta_1'\colon(Q_1,\epsilon_1)\to (P_1,\chi_1)(H',\eta')$ is an isomorphism of tensor functors, implies that 
	$$
	\xymatrix{
	\omega_{\AA\times_\CC\BB}H=\omega_\AA Q_1 \ar^-{\id_{\omega_{\AA\times_\CC\BB}}\star\theta=\id_{\omega_\AA}\star \beta_1'}[rr] \ar_-{\eta=\epsilon_1}[rd] & & \omega_\AA P_1H'=\omega_{\AA\times_\CC\BB}H'  \ar^-{\eta'}[ld] \\
	& \omega_\DD &
	}
		$$
		commutes. So $\theta\colon (H,\eta)\to (H',\eta')$ is an isomorphism of tensor functors.	
\end{proof}

\begin{cor} \label{t1-11}
	Let $(\AA_1,\omega_{\AA_1})$, $(\AA_2,\omega_{\AA_2})$ and $(\BB,\omega_{\BB})$ be neutralized Tannakian categories with fundamental groups $G_1, G_2$ and $H$. For tensor functors $(\AA_1,\omega_{\AA_1})\to (\BB,\omega_{\BB})$ and $(\AA_2,\omega_{\AA_2})\to(\BB,\omega_{\BB})$, the fundamental group of the neutralized Tannakian category $(\AA_1\times_\BB \AA_2,\omega_{\AA_1\times_\BB\AA_2})$ is the amalgamated free product $G_1*_H G_2$.
\end{cor}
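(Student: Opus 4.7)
The proof will apply the universal property of the fibre product of neutralized Tannakian categories (Proposition~\ref{prop: fibre product Tannakian categories}) together with the dictionary between morphisms of $k$-groups and isomorphism classes of tensor functors (Lemma~\ref{lemma: morphisms of groups and isom classes of functors}). First, identify each of the four categories with $\Rep$ of its fundamental group, so that the given tensor functors $(F,\gamma)\colon (\AA_1,\omega_{\AA_1})\to (\BB,\omega_\BB)$ and $(G,\delta)\colon (\AA_2,\omega_{\AA_2})\to (\BB,\omega_\BB)$ correspond to morphisms $\f_1\colon H\to G_1$ and $\f_2\colon H\to G_2$. Let $G$ denote the fundamental group of $(\AA_1\times_\BB\AA_2,\omega_{\AA_1\times_\BB\AA_2})$. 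The tensor functors $(P_1,\chi_1)$ and $(P_2,\chi_2)$ from Lemma~\ref{lemma: tensor on fibre product neutralized Tannakian categories} induce morphisms $i_1\colon G_1\to G$ and $i_2\colon G_2\to G$, and the $2$-commutativity isomorphism $\psi\colon (F,\gamma)(P_1,\chi_1)\simeq (G,\delta)(P_2,\chi_2)$ forces $i_1\f_1=i_2\f_2$, since isomorphic tensor functors induce equal morphisms of fundamental groups.

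Second, verify the universal property of amalgamated free products. Given a $k$-group $G'$ with morphisms $j_1\colon G_1\to G'$ and $j_2\colon G_2\to G'$ satisfying $j_1\f_1=j_2\f_2$, put $(\DD,\omega_\DD)=(\Rep(G'),\omega_{G'})$ and translate $j_1,j_2$ back into tensor functors $(Q_1,\epsilon_1)\colon (\DD,\omega_\DD)\to (\AA_1,\omega_{\AA_1})$ and $(Q_2,\epsilon_2)\colon (\DD,\omega_\DD)\to (\AA_2,\omega_{\AA_2})$. The equality $j_1\f_1=j_2\f_2$ says, via Lemma~\ref{lemma: morphisms of groups and isom classes of functors}, that $(F,\gamma)(Q_1,\epsilon_1)$ and $(G,\delta)(Q_2,\epsilon_2)$ lie in the same isomorphism class of tensor functors; choose a tensor isomorphism $\alpha$ between them. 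This produces a $2$-commutative diagram to which Proposition~\ref{prop: fibre product Tannakian categories} applies, yielding a tensor functor $(H,\eta)\colon (\DD,\omega_\DD)\to (\AA_1\times_\BB\AA_2,\omega_{\AA_1\times_\BB\AA_2})$ with $(P_1,\chi_1)(H,\eta)=(Q_1,\epsilon_1)$ and $(P_2,\chi_2)(H,\eta)=(Q_2,\epsilon_2)$. Translating back with Lemma~\ref{lemma: morphisms of groups and isom classes of functors}, one obtains a morphism $G\to G'$ satisfying the required relations with $i_1, i_2$.

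Third, handle uniqueness. If $G\to G'$ is another morphism with these properties, it corresponds to a tensor functor $(H',\eta')$ giving a second morphism of $2$-commutative diagrams, and the uniqueness clause of Proposition~\ref{prop: fibre product Tannakian categories} supplies an isomorphism of tensor functors between $(H,\eta)$ and $(H',\eta')$; by Lemma~\ref{lemma: morphisms of groups and isom classes of functors} the induced morphisms $G\to G'$ therefore agree. The main subtlety I expect is that the condition $j_1\f_1=j_2\f_2$ at the level of $k$-groups corresponds only to an \emph{isomorphism class} of tensor functors at the Tannakian level, so one must pick a specific tensor isomorphism $\alpha$ to feed into Proposition~\ref{prop: fibre product Tannakian categories}; fortunately the uniqueness statement of that proposition is formulated up to isomorphism, and Lemma~\ref{lemma: morphisms of groups and isom classes of functors} reads out equal morphisms from isomorphic tensor functors, so the two $2$-categorical layers match up cleanly.
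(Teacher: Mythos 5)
Your proposal follows the same route as the paper's proof: dualize the $2$-commutative diagram of the fibre product to get $i_1,i_2$, translate an arbitrary cocone $(G',j_1,j_2)$ into a $2$-commutative diagram of neutralized Tannakian categories via Lemma~\ref{lemma: morphisms of groups and isom classes of functors}, apply Proposition~\ref{prop: fibre product Tannakian categories}, and read off existence and uniqueness of $G\to G'$. Your explicit remark about having to choose a particular tensor isomorphism $\alpha$ within the isomorphism class determined by $j_1\f_1=j_2\f_2$ is a point the paper passes over more quickly, but the argument is the same.
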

\begin{proof}
	Let $G$ denote the fundamental group of $(\AA_1\times_\BB \AA_2,\omega_{\AA_1\times_\BB\AA_2})$.
	The $2$-commutative diagram 
	\begin{equation} \label{eq: diagram fibre product}
	\xymatrix{
		& (\AA_1\times_{\BB}\AA_2, \omega_{\AA_1\times_\BB\AA_2}) \ar_-{(P_1,\chi_1)}[ld] \ar^-{(P_2,\chi_2)}[rd] & \\
		(\AA_1,\omega_{\AA_1}) \ar[rd] & & (\AA_2,\omega_{\AA_2}) \ar[ld] \\	
		& (\BB,\omega_{\BB})& 
	}	
	\end{equation}
	of neutralized Tannakian categories dualizes to a commutative diagram
	$$
	\xymatrix{
	& G & \\
	G_1 \ar^-{i_1}[ru] & & G_2 \ar_-{i_2}[lu] \\
	& \ar[lu] H \ar[ru] &
}
	$$
	of $k$-groups. Let
		\begin{equation}
		\label{eq: diagram for groups}
	\xymatrix{
		& G' & \\
		G_1 \ar^-{i'_1}[ru] & & G_2 \ar_-{i'_2}[lu] \\
		& \ar[lu] H \ar[ru] &
	}
	\end{equation}
be a commutative diagram of $k$-groups. As the morphisms between fundamental groups are in bijection with isomorphism classes of tensor functors between neutralized Tannakian categories (Lemma \ref{lemma: morphisms of groups and isom classes of functors}), there exists a $2$-commutative diagram
\begin{equation} \label{eq: diagram cat}
\xymatrix{
& (\Rep(G'),\omega_{G'}) \ar_-{(Q_1,\epsilon_1)}[ld] \ar^-{(Q_2,\epsilon_2)}[rd] & \\
  (\AA_1,\omega_{\AA_1}) \ar[rd] & & (\AA_2,\omega_{\AA_2}) \ar[ld] \\	
 & (\BB,\omega_{\BB})& 	
}
\end{equation}
of neutralized Tannakian categories that dualizes to (\ref{eq: diagram for groups}). By Proposition \ref{prop: fibre product Tannakian categories} there exists a morphism 
$((H,\eta),\beta_1,\beta_2)$ of $2$-commutative diagrams from (\ref{eq: diagram cat}) to (\ref{eq: diagram fibre product}). The morphism $(H,\eta)\colon (\Rep(G'),\omega_{G'})\to (\AA_1\times_\BB \AA_2,\omega_{\AA_1\times_\BB\AA_2})$ dualizes to a morphism $\f\colon G\to G'$ of $k$-groups. The $2$-commutativity of 
$$ 
\xymatrix{
(\Rep(G'),\omega_{G'}) \ar^-{(H,\eta)}[rr] \ar_-{(Q_j,\epsilon_j)}[rd] & & (\AA_1\times_\BB \AA_2,\omega_{\AA_1\times_\BB\AA_2}) \ar^-{(P_j,\chi_j)}[ld] 	\\
& (\AA_j,\omega_{\AA_j}) &	
}
$$
realized by $\beta_j$ implies that 
\begin{equation} \label{eq: diagram groups left right}
\xymatrix{
G \ar^-{\f}[rr] & & G' \\
& \ar_-{i_j'}[ru] \ar^-{i_j}[lu] G_j &
}
\end{equation}
commutes for $j=1,2$. Finally, the uniqueness of $\f$ (making (\ref{eq: diagram groups left right}) commute for $j=1,2$) follows from the fact that $((H,\eta),\beta_1,\beta_2)$ is unique up to an isomorphism.
\end{proof}

\begin{cor} \label{t1-12}
	Let $H\to G_1$ and $H\to G_2$ be morphisms of $k$-groups. Then the amalgamated free product $G_1*_H G_2$ exists.
\end{cor}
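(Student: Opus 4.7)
The plan is to deduce the existence of $G_1*_H G_2$ as an immediate consequence of Corollary \ref{t1-11}, once the appropriate neutralized Tannakian categories are put in place.

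By the Tannaka reconstruction theorem (Theorem \ref{theo:DM}), the representation categories $(\Rep(G_1),\omega_{G_1})$, $(\Rep(G_2),\omega_{G_2})$, and $(\Rep(H),\omega_H)$, equipped with their forgetful fibre functors, are neutralized Tannakian categories whose fundamental groups recover $G_1$, $G_2$, and $H$, respectively. The given morphisms $\f_i\colon H\to G_i$ induce restriction functors $\operatorname{Res}_{\f_i}\colon \Rep(G_i)\to \Rep(H)$ for $i=1,2$. Since restriction acts as the identity on underlying $k$-vector spaces, the comparison isomorphism $\omega_H\circ\operatorname{Res}_{\f_i}\simeq \omega_{G_i}$ may be chosen to be the identity, so these restrictions are tensor functors between neutralized Tannakian categories in the sense of Section 1.2.

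By Lemma \ref{lemma: tensor on fibre product neutralized Tannakian categories}, the fibre product $\Rep(G_1)\times_{\Rep(H)}\Rep(G_2)$ inherits the structure of a neutralized Tannakian category. Corollary \ref{t1-11} then identifies its fundamental group (which exists by Theorem \ref{theo:DM}) with the amalgamated free product $G_1*_H G_2$; in particular, the latter exists. All the substantive work, namely constructing the Tannakian structure on the fibre product and verifying the pushout universal property via Proposition \ref{prop: fibre product Tannakian categories}, has already been carried out, so there is no real obstacle here; the corollary merely repackages what was established in Corollary \ref{t1-11} into the existence statement most convenient for later use.
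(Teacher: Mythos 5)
Your proof is correct and follows the paper's own argument exactly: regard the restriction functors $\Rep(G_i)\to\Rep(H)$ as tensor functors between neutralized Tannakian categories and invoke Corollary~\ref{t1-11} to identify the fundamental group of $\Rep(G_1)\times_{\Rep(H)}\Rep(G_2)$ with $G_1*_H G_2$. Nothing further is needed.
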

\begin{proof}
	The restriction functors $\Rep(G_i)\to \Rep(H)$ ($i=1,2$) can be regarded as tensor functors $(\Rep(G_i),\omega_{G_i})\to (\Rep(H),\omega_H)$ between neutralized Tannakian categories. By Corollary \ref{t1-11} the fundamental group of $\Rep(G_1)\times_{\Rep(H)}\Rep(G_2)$ is $G_1*_H G_2$.
\end{proof}

\section{The Hopf algebra of the amalgamated free product} \label{sec:2}
In this section we offer an explicit description of the Hopf algebra $k[G_1*_HG_2]$ of the amalgamated free product of two 
morphisms $H\to G_1$ and $H\to G_2$ of $k$-groups in terms of the Hopf algebras $k[G_1]$, $k[G_2]$ and $k[H]$. In other words, we explicitly describe the fibre product in the category of Hopf algebras. 
(Recall that all Hopf algebras are assumed to be commutative.) We also discuss properties of amalgamated free products, such as being connected or geometrically reduced. 


\subsection{Fibre products of Hopf algebras}
To motivate our explicit construction of the fibre product of Hopf algebras, let us revisit the construction of the amalgamated free product in the category of groups from a more categorical/conceptual perspective. For now, assume that $\f_1\colon H\to G_1$ and $\f_2\colon H\to G_2$ are morphisms of abstract groups. The amalgamated free product $G_1*_HG_2$ (in the category of groups) can be constructed as follows: First form the disjoint union $G_1\uplus G_2$ and then build the free monoid $M(G_1\uplus G_2)$ on the set $G_1\uplus G_2$. Finally, the amalgamated free product $G_1*_HG_2$ is obtained from $M(G_1\uplus G_2)$, by quotienting out by the relations $$g_1\cdot g_1'=g_1g_1' \quad \forall \ g_1, g_1'\in G_1,$$  $$g_2\cdot g_2'=g_2g_2' \quad \forall \ g_2, g_2'\in G_2,$$
$$1_{G_1}=1_{M(G_1\uplus G_2)},$$
$$1_{G_2}=1_{M(G_1\uplus G_2)},$$  
$$\f_1(h)=\f_2(h) \quad \forall \ h\in H.$$
Here the $\cdot$ is used to denote multiplication in $M(G_1\uplus G_2)$.

This construction can be dualized. Let $k[G_1]\to k[H]$ and $k[G_2]\to k[H]$ be morphisms of Hopf algebras. The disjoint union $G_1\uplus G_2$ corresponds to the product algebra $k[G_1]\oplus k[G_2]=k[G_1]\times k[G_2]$. The free monoid is the left adjoint of the forgetful functor from the category of monoids to the category of sets. In the Hopf algebra world, we should thus consider the right adjoint $A\rightsquigarrow C(A)$ of the forgetful functor from the category of bialgebras to the category of algebras. Finally, taking the quotient of $M(G_1\uplus G_2)$, corresponds to choosing an appropriate subbialgebra $k[G_1*_HG_2]$ of $C(k[G_1]\times k[G_2])$ (that turns out to be a Hopf algebra). 

To carry out the above idea, we need an explicit description of the right adjoint $A\rightsquigarrow C(A)$ of the forgetful functor from the category of bialgebras to the category of algebras. To this end, let us recall the construction of the \emph{cofree coalgebra} $C(V)$ on a $k$-vector space $V$. See \cite[Section 6.4]{Sweedler:HopfAlgebras} or \cite{Hazewinkel:CofreeCoalgebras}. It comes with a $k$-linear map $\pi\colon C(V)\to V$ characterized by the following universal property: If $D$ is a coalgebra and $\varphi\colon D\to V$ is a $k$-linear map, there exists a unique morphism $\tilde{\varphi}\colon D\to C(V)$ of coalgebras making
$$
\xymatrix{
	C(V) \ar^-\pi[rr] & & V \\
	& D \ar_-\varphi[ru] \ar^-{\tilde{\varphi}}@{..>}[lu]	&
}
$$
commutative. In other words, the functor $V\rightsquigarrow C(V)$ is right adjoint to the forgetful functor from coalgebras to vector spaces. We consider the tensor algebra $TV=k\oplus V\oplus V^{\otimes 2}\oplus\cdots$. However, we disregard the usual (non-commutative) algebra structure on $TV$. Instead, we define a coalgebra structure on $TV$.
For every $n\in\nn$ and $0\leq i,j$ with $i+j=n$, we have an isomorphism $\psi_{i,j}\colon V^{\otimes n}\to V^{\otimes i}\otimes V^{\otimes j}$. Here $V^{0\otimes}=k$ and $V^{1\otimes}=V$.
For $t\in TV$ homogeneous of degree $n$, i.e., $t\in V^{\otimes n}\subseteq TV$, we define $$\Delta(t)=\sum_{i=0}^n\psi_{i,n-i}(t)\in\bigoplus_{i=0}^n V^{\otimes i}\otimes V^{\otimes (n-i)}\subseteq TV\otimes TV.$$
By linearity, this assignment extends to a $k$-linear map  
$\Delta\colon TV\to TV\otimes TV$. Define $\epsilon\colon TV\to k$ as the projection onto the degree zero component. It is straight forward to check that $(TV, \Delta, \epsilon)$ is a coalgebra. However, this is not yet the cofree coalgebra on $V$. To construct $C(V)$ we need to consider the completion $\hat{T}V=k\times V\times V^{\otimes  2}\times\ldots$ of $TV$. Thinking of the elements of $\hat{T}V$ as infinite sums, we have an inclusion $TV\subseteq\hat{T}V$.

One cannot extend $\Delta\colon TV\to TV\otimes TV$ directly to a map $\Delta\colon \hat{T}V\to \hat{T}V\otimes\hat{T}V$, because the natural extension of $\Delta\colon TV\to TV\otimes TV$ is a map $\Delta\colon\hat{T}V\to \hat{T}V\hat{\otimes}\hat{T}V$, where  $\hat{T}V\hat{\otimes}\hat{T}V=\prod_{i,j\in\nn}V^{\otimes i}\otimes V^{\otimes j}$. Note that $\hat{T}V\otimes\hat{T}V$ is a proper subset of $\hat{T}V\hat{\otimes}\hat{T}V$: If we think of an element of $\hat{T}V\hat{\otimes}\hat{T}V$ as an infinite matrix
\begin{equation} \label{eq: matrix notation}
\begin{pmatrix}
f_{00} & f_{01} & f_{02} & \ldots \\
f_{10} & f_{11} & f_{12} & \ldots \\
\vdots & \vdots & &
\end{pmatrix}
\end{equation}
with $f_{ij}\in V^{\otimes i}\otimes V^{\otimes j}$, then the elements of
$\hat{T}V\otimes\hat{T}V$ are finite sums of elements of the form

$$f\otimes g=
\begin{pmatrix}
f_0\otimes g_0 & f_0\otimes g_1 & f_0\otimes g_2 & \ldots \\
f_1\otimes g_0 & f_1\otimes g_1 & f_1\otimes g_2 & \ldots \\
\vdots & \vdots & &
\end{pmatrix}
$$
with $f=(f_0,f_1,\ldots),\ g=(g_0,g_1,\ldots)\in \hat{T}V$. Note that with respect to this matrix representation, the map $\Delta\colon \hat{T}V\to \hat{T}V\hat{\otimes}\hat{T}V$ is given by
$$\Delta(f)=\begin{pmatrix}
\psi_{00}(f_0) & \psi_{01}(f_1) & \psi_{02}(f_2) & \ldots \\
\psi_{10}(f_1) & \psi_{11}(f_2) &  \\
\psi_{21}(f_2) &  & & \\
\vdots & & & \ddots
\end{pmatrix} $$
for $f=(f_0,f_1,\ldots)\in \hat{T}V$. Set
$$C(V)=\{f\in\hat{T}V|\ \Delta(f)\in \hat{T}V\otimes \hat{T}V\subseteq \hat{T}V\hat{\otimes} \hat{T}V \}.$$
Following \cite{Hazewinkel:CofreeCoalgebras}, we call the elements of $\hat{T}V$ that belong to $C(V)$ \emph{representative}.

One can show (\cite[Theorem 3.12]{Hazewinkel:CofreeCoalgebras}), that $\Delta(C(V))\subseteq C(V)\otimes C(V)\subseteq \hat{T}V\otimes \hat{T}V$ and so $C(V)$ (together with the induced structure maps) is a coalgebra. Let $\pi\colon C(V)\to V$ be the projection to the degree one component. Then $\pi$ satisfies the universal property of the cofree coalgebra (\cite[Theorem 3.14]{Hazewinkel:CofreeCoalgebras}). Indeed, if $(D,\mu,\varepsilon)$ is a coalgebra with a $k$-linear map $\varphi\colon D\to V$, the unique $\tilde{\varphi}\colon D\to C(V)$ such that $\pi\tilde{\varphi}=\varphi$ is given by $\tilde{\varphi}(d)=(\varepsilon(d),\varphi(d),\varphi^{\otimes 2}(\mu_2(d)),\varphi^{\otimes 3}(\mu_3(d)),\ldots)\in C(V)$ for $d\in D$, where
$\mu_i\colon D\to D^{\otimes i}$ is the $(i-1)$-st iteration of $\mu$. 

If $A$ is a $k$-algebra, then $C(A)$ naturally acquires the structure of a $k$-algebra (Cf. \cite[p.~134]{Sweedler:HopfAlgebras}). Indeed, this follows from the universal property: Let $m\colon A\otimes A\to A$ be the multiplication. As $C(A)\otimes C(A)$ has the structure of a coalgebra (the product coalgebra), the map $C(A)\otimes C(A)\xrightarrow{\pi\otimes \pi}A\otimes A\xrightarrow{m} A$ lifts to a multiplication map $C(A)\otimes C(A)\to C(A)$. Similarly, as $k$ has the trivial coalgebra structure, the unit $k\to A$ lifts to a unit $k\to C(A)$.
Explicitly, the unit $k\to C(A)$ is the map $k\to C(A),\ \lambda\mapsto (\lambda,\lambda,\lambda,\ldots)$ and multiplication in $C(A)$ is componentwise, using the multiplication in $A^{\otimes i}$. One sees that $C(A)$ is a bialgebra and $\pi\colon C(A)\to A$ is a morphism of algebras. Moreover, for every bialgebra $B$ with an algebra morphism $\varphi\colon B\to A$, the unique $\tilde{\varphi}\colon B\to C(A)$ such that $\tilde{\varphi}\pi=\varphi$ is a morphism of bialgebras. In other words, $A\rightsquigarrow C(A)$ is the sought for right adjoint of the forgetful functor from bialgebras to algebras. 

\medskip

Let $\alpha_1\colon k[G_1]\to k[H]$ and $\alpha_2\colon k[G_2]\to k[H]$ be morphisms of Hopf algebras. We will construct their fibre product $k[G_1]\times_{k[H]}k[G_2]=k[G_1*_HG_2]$ as a suitable subalgebra of $C(k[G_1]\oplus k[G_2])$.

Note that for $n\geq 0$ the vector space $(k[G_1]\oplus k[G_2])^{\otimes n}$ is the direct sum of the subspaces 
$$T_{i_1,\ldots,i_n}=k[G_{i_1}]\otimes \ldots\otimes k[G_{i_n}]$$ with $i_1,\ldots,i_n\in\{1,2\}$.
For $\ell\in \{1,\ldots,n\}$ define a map 
$$\psi_{i_1,\ldots,i_n}^\ell\colon T_{i_1,\ldots,i_n}\to T_{i_1,\ldots,i_{\ell-1},i_\ell,i_\ell,i_{\ell+1},\ldots,i_n}$$ by applying the comultiplication $\Delta_{i_l}\colon k[G_{i_\ell}]\to k[G_{i_\ell}]\otimes k[G_{i_\ell}]$ on the $\ell$-th factor and the identity on all the other factors. Similarly, define
$$\delta_{i_1,\ldots,i_n}^\ell\colon T_{i_1,\ldots,i_n}\to T_{i_1,\ldots,i_{\ell-1},i_{\ell+1},\ldots,i_n}$$ by applying the counit $\epsilon_{i_\ell}\colon k[G_{i_\ell}]\to k$ on the $\ell$-th factor and the identity on all the other factors.
We also define 
$$\theta^\ell_{i_1,\ldots,i_n}\colon T_{i_1,\ldots,i_n}\to k[G_{i_1}]\otimes\ldots\otimes k[G_{i_{\ell-1}}]\otimes k[H]\otimes k[G_{i_\ell}]\otimes\ldots\otimes k[G_{i_n}] $$
by applying $\alpha_{i_\ell}\colon k[G_{i_\ell}]\to k[H]$ on the $\ell$-th factor and the identity on all the other factors.

An element $f\in \hat{T}(k[G_1]\oplus k[G_2])$ can uniquely be written in the form 
\begin{equation} \label{eq: parts of f}
f=\sum_{n=0}^\infty\sum_{(i_1,\ldots,i_n)\in \{1,2\}^n} f_{i_1,\ldots,i_n}
\end{equation}
with $f_{i_1,\ldots,i_n}\in T_{i_1,\ldots,i_n}$. Note that for $n=0$, $T_{i_1,\ldots,i_n}=k$ and in (\ref{eq: parts of f}), the inner sum for $n=0$ has one summand $f_\emptyset\in k$.

We say that $f$ is \emph{multiplicatively coherent} if 
\begin{equation} \label{eq: comultiplication condition}
\psi^\ell_{i_1,\ldots,i_n}(f_{i_1,\ldots,i_n})=f_{i_1,\ldots,i_\ell,i_\ell,\ldots,i_n}
\end{equation}
for all $n\geq 1$, $i_1,\ldots,i_n\in\{1,2\}$ and $\ell\in \{1,\ldots,n\}$.
If 
\begin{equation} \label{eq: counit condition}
\delta^\ell_{i_1,\ldots,i_n}(f_{i_1,\ldots,i_n})=f_{i_1,\ldots,i_{\ell-1},i_{\ell+1},\ldots,i_n}
\end{equation}
for all $n\geq 1$, $i_1,\ldots,i_n\in\{1,2\}$ and $\ell\in \{1,\ldots,n\}$, then $f$ is \emph{unitaly coherent}.
If 
\begin{equation} \label{eq: H coherence}
\theta^\ell_{i_1,\ldots,i_{\ell-1},1,i_{\ell+1},\ldots,i_n}(f_{i_1,\ldots,i_{\ell-1},1,i_{\ell+1},\ldots,i_n})=\theta^\ell_{i_1,\ldots,i_{\ell-1},2,i_{\ell+1},\ldots,i_n}(f_{i_1,\ldots,i_{\ell-1},2,i_{\ell+1},\ldots,i_n})
\end{equation}
for all $n\geq 1$, all $\ell\in\{1,\ldots,n\}$ and $i_1,\ldots,i_{\ell-1},i_{\ell+1},\ldots,i_n\in\{1,2\}$, then 
$f$ is $H$-coherent.
We call $f$ \emph{coherent} if it is multiplicatively, unitaly and $H$-coherent.

If we think of $f$ as a function on the words in $G_1\uplus G_2$, multiplicative coherence means that $f$ takes the same value on words that only differ by replacing two adjacent elements from the same group with their product. Unital coherence means that $f$ takes the same value on words that only differ by deleting the identity element of $G_1$ or the identity element of $G_2$. The meaning of $H$-coherence is that if $g_1\in G_1$ and $g_2\in G_2$ come from the same element in $H$, then the value of $f$ on a word does not change if a letter $g_1$ is replaced by $g_2$.
We define
$$k[G_1*_HG_2]=\{f\in C(k[G_1]\oplus k[G_2])|\ f \text{ is coherent}\}.$$

As the $\psi^\ell_{i_1,\ldots,i_n}$'s, $\delta^\ell_{i_1,\ldots,i_n}$'s and $\theta_{i_1,\ldots,i_n}^\ell$'s are morphisms of $k$-algebras, it is clear that $k[G_1*_HG_2]$ is a subalgebra of $C(k[G_1]\oplus k[G_2])$. On the other hand, it is not so clear that  $k[G_1*_HG_2]$ is a subcoalgebra of $C(k[G_1]\oplus k[G_2])$. To see this, we will work with a different characterization of representative elements based on duality.

We briefly go back to the general setup where $V$ is a vector space over $k$. We denote with $V^*=\Hom_k(V,k)$ the dual vector space of functionals on $V$.
Every element $f$ of $\hat{T}V$ defines a functional $\fun(f)\in (TV^*)^*$ on $TV^*$. In detail, for every $n\geq 0$ we have an evaluation map $$V^{\otimes n}\times (V^*)^{\otimes n}\to k,\ (v,a)\mapsto a(v) $$ determined by $(v_1\otimes\ldots\otimes v_n, a_1\otimes\ldots\otimes a_n)\mapsto a_1(v_1)\ldots a_n(v_n)$.
Then for $f=(f_0,f_1,\ldots)\in \hat{T}V$ and $a=a_0+a_1+\ldots+a_r\in TV^*$ one sets $\fun(f)(a)=\sum_{i=0}^ra_i(f_i)$. The map $\hat{T}V\to (TV^*)^*,\ f\mapsto \fun(f)$ is injective. In a similar fashion, any $f\in \hat{T}V\hat{\otimes}\hat{T}V=\prod_{i,j\in\nn}V^{\otimes i}\otimes V^{\otimes j}$ defines a functional $\fun(f)$ on $TV^*\otimes TV^*$. The ``comultiplication'' $\Delta$ on $\hat{T}V$ is dual to the usual (noncommutative) multiplication in the tensor algebra $TV^*$ in the sense that
\begin{equation} \label{eq: duality}
\fun(\Delta(f))(a\otimes b)=\fun(f)(ab)
\end{equation}
for $f\in \hat{T}V$ and $a,b\in TV^*$.

For a functional $F$ on $TV^*$ and $b\in TV^*$ we define a functional $R_bF$ on $TV^*$ by $(R_bF)(a)=F(ab)$ for all $a\in TV^*$. Similarly, we define $L_aF$ by $(L_aF)(b)=F(ab)$. We abbreviate $R_bf=R_b\fun(f)$ and $L_af=L_a\fun(f)$. Note that 
$$Rf=\{R_bf|\ b\in TV^*\} \text{ and } Lf=\{L_af|\ a\in TV^*\}$$
are subspaces of $(TV^*)^*$. The following lemma is essentially Theorem 3.12 in \cite{Hazewinkel:CofreeCoalgebras}.

\begin{lemma} \label{lemma: Hazewinkel}
	For $f\in \hat{T}V$ the following are equivalent:
	\begin{enumerate}
		\item The element $f$ is representative.
		\item The vector space $Rf$ is finite dimensional.
		\item The vector space $Lf$ is finite dimensional.
	\end{enumerate}
	Moreover, if these conditions are satisfied, then $\Delta(f)$ can be written in the form $\Delta(f)=\sum_{i=1}^rg_i\otimes h_i$ with $g_i,h_i\in \hat{T}V$ representative and $\fun(g_i)\in Rf$ and $\fun(h_i)\in Lf$ for $i=1,\ldots,r$.
\end{lemma}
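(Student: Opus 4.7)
The proof hinges on the duality identity $\fun(\Delta f)(a\otimes b) = \fun(f)(ab)$ from \eqref{eq: duality}, which I view as a bilinear form $B_f(a,b) := \fun(f)(ab)$ on $TV^*\times TV^*$. Representability of $f$ amounts to $B_f$ having ``finite rank'' in a precise sense, and this will turn out to be equivalent to the finite-dimensionality of $Rf$ (or, symmetrically, of $Lf$). Throughout I use that $\fun$ is injective on $\hat{T}V$ and on $\hat{T}V\hat\otimes\hat{T}V$, because $(V^*)^{\otimes n}$ separates points of $V^{\otimes n}$.

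For (i) $\Rightarrow$ (ii), (iii): if $\Delta f = \sum_{i=1}^r g_i\otimes h_i$ in $\hat{T}V\otimes \hat{T}V$, then the duality identity rewrites as $B_f(a,b) = \sum_i \fun(g_i)(a)\fun(h_i)(b)$, whence $R_b f = \sum_i \fun(h_i)(b)\fun(g_i)\in \operatorname{span}(\fun(g_i))$ and similarly $L_a f\in \operatorname{span}(\fun(h_i))$.

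For (ii) $\Rightarrow$ (i), I first compute $R_b f$ explicitly: for homogeneous $b\in (V^*)^{\otimes m}$, the restriction of $R_b f$ to $(V^*)^{\otimes n}$ is evaluation against $c_n := (\id_{V^{\otimes n}}\otimes b)(\psi_{n,m}(f_{n+m})) \in V^{\otimes n}$, so $R_b f = \fun(c)$ with $c=(c_n)_n\in \hat{T}V$. Picking $b_1,\dots,b_r\in TV^*$ with $R_{b_1}f,\dots,R_{b_r}f$ a basis of $Rf$ produces $g_1,\dots,g_r\in\hat{T}V$ satisfying $\fun(g_i)=R_{b_i}f$. Next, introduce the ``right-slice'' subspace $U^{(m)}\subset V^{\otimes m}$ spanned by all contractions $(\phi\otimes\id)(\psi_{n,m}(f_{n+m}))$ for $n\geq 0$ and $\phi\in(V^*)^{\otimes n}$. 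By the formula for $R_bf$, the kernel of $(V^*)^{\otimes m}\to Rf$, $b\mapsto R_b f$, equals $(U^{(m)})^\perp$, so the induced map $(V^*)^{\otimes m}\to (U^{(m)})^*$, $b\mapsto b|_{U^{(m)}}$, has image of dimension at most $r = \dim Rf$ while still separating points of $U^{(m)}$ (because $(V^*)^{\otimes m}$ separates $V^{\otimes m}$). This forces $\dim U^{(m)}\leq r$. Now define linear functionals $\lambda_i\colon TV^*\to k$ by $R_bf = \sum_i\lambda_i(b)R_{b_i}f$. Each $\lambda_i$ vanishes on $(U^{(m)})^\perp$, so descends to a functional on the image of $(V^*)^{\otimes m}\to (U^{(m)})^*$; since $U^{(m)}$ is finite-dimensional this restriction map is surjective, and $\lambda_i|_{(V^*)^{\otimes m}}$ corresponds via double duality to an element $h_{i,m}\in U^{(m)}\subset V^{\otimes m}$. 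Assembling gives $h_i = (h_{i,m})_m\in\hat{T}V$ with $\fun(h_i)=\lambda_i$. The duality identity now reads $\fun(\Delta f) = \fun\bigl(\sum_i g_i\otimes h_i\bigr)$ on $TV^*\otimes TV^*$, and injectivity of $\fun$ on $\hat{T}V\hat\otimes\hat{T}V$ yields $\Delta f = \sum_i g_i\otimes h_i\in \hat{T}V\otimes\hat{T}V$. The direction (iii) $\Rightarrow$ (i) is symmetric, swapping left and right multiplication throughout.

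For the moreover clause, $\fun(g_i) = R_{b_i}f \in Rf$ by construction, while the identity $L_a f = \sum_i\fun(g_i)(a)\fun(h_i)$ combined with the linear independence of $\fun(g_1),\dots,\fun(g_r)$ (select $a_1,\dots,a_r\in TV^*$ so that $(\fun(g_i)(a_j))$ is invertible) expresses each $\fun(h_i)$ as a combination of $L_{a_j}f\in Lf$. The main obstacle is the implication (ii) $\Rightarrow$ (i), specifically showing that the coefficient functionals $\lambda_i$ assemble into elements of $\hat{T}V$ rather than merely arbitrary elements of $(TV^*)^*$. This reduces entirely to establishing the finite-dimensionality of each $U^{(m)}$, which is where the hypothesis $\dim Rf<\infty$ does its work.
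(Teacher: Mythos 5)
Your treatment of (i)$\Rightarrow$(ii),(iii) coincides with the paper's: both read off from the duality identity \eqref{eq: duality} that each $R_bf$ lies in the span of the $\fun(g_i)$ and each $L_af$ in the span of the $\fun(h_i)$. The genuine difference is in the hard direction: the paper disposes of (ii)$\Rightarrow$(i), of (iii)$\Rightarrow$(i) and of the ``moreover'' clause by citing \cite[Theorem 3.12 and Remark 3.24]{Hazewinkel:CofreeCoalgebras}, whereas you give a complete self-contained argument, and it is correct. Your key steps all check out: the formula $R_bf=\fun(c)$ with $c_n=(\id_{V^{\otimes n}}\otimes b)(\psi_{n,m}(f_{n+m}))$ shows every $R_bf$ comes from an element of $\hat{T}V$ and identifies the kernel of $b\mapsto R_bf$ on $(V^*)^{\otimes m}$ with $(U^{(m)})^\perp$; since the image of $(V^*)^{\otimes m}$ in $(U^{(m)})^*$ has dimension at most $r=\dim Rf$ yet separates points of $U^{(m)}$, one gets $\dim U^{(m)}\le r$; and this finite-dimensionality is exactly what lets the coefficient functionals $\lambda_i$ be realized by elements $h_{i,m}\in U^{(m)}$, after which injectivity of $\fun$ on $\hat{T}V\hat{\otimes}\hat{T}V$ gives $\Delta(f)=\sum_i g_i\otimes h_i$. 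What your route buys is a proof that does not lean on the external reference (essentially reproving Hazewinkel's theorem in this setting); what it costs is length. One small omission in the ``moreover'' part: the lemma also asserts that the $g_i$ and $h_i$ are themselves representative, and you only verify $\fun(g_i)\in Rf$ and $\fun(h_i)\in Lf$. This is repaired in one line from what you already have: $R_c\fun(g_i)=R_cR_{b_i}f=R_{cb_i}f\in Rf$, so $Rg_i\subseteq Rf$ is finite dimensional and $g_i$ is representative by (ii)$\Rightarrow$(i); likewise $L_{a'}\fun(h_i)$ is a combination of the $L_{a_ja'}f\in Lf$, and $Lf$ is finite dimensional by the now-established (i)$\Rightarrow$(iii), so $h_i$ is representative by (iii)$\Rightarrow$(i).
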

\begin{proof}
	Assume that $f$ is representative. So $\Delta(f)=\sum_{i=1}^rg_i\otimes h_i$ for some $g_i,h_i\in \hat{T}V$. Using (\ref{eq: duality}) we find 
	$$L_af(b)=R_bf(a)=\fun(f)(ab)=\sum_{i=0}^r\fun(g_i)(a)\fun(h_i)(b)$$
	for $a,b\in TV^*$. Thus $R_bf$ lies in the subspace generated by $\fun(g_1),\ldots,\fun(g_r)$ and $L_af$ lies in the subspace generated by $\fun(h_1),\ldots,\fun(h_r)$. Thus (i) implies (ii) and (iii). The implication (ii)$\Rightarrow$(i) is shown in \cite[Theorem 3.12]{Hazewinkel:CofreeCoalgebras} and the implication (iii)$\Rightarrow$(i) works similar to (ii)$\Rightarrow$(i). The final statement of the lemma is contained in the proof of \cite[Theorem 3.12]{Hazewinkel:CofreeCoalgebras}. See \cite[Remark 3.24]{Hazewinkel:CofreeCoalgebras}.
\end{proof}

We return to the discussion of the subalgebra $k[G_1*_HG_2]$ of $C(k[G_1]\oplus k[G_2])$.
For $n\geq 0$, the vector space $(k[G_1]^*\oplus k[G_2]^*)^{\otimes n}$ is the direct sum of the subspaces 
$$T^*_{i_1,\ldots,i_n}=k[G_{i_1}]^*\otimes \ldots\otimes k[G_{i_n}]^*$$ with $i_1,\ldots,i_n\in\{1,2\}$.

The comultiplication $\Delta_i\colon k[G_i]\to k[G_i]\otimes k[G_i]$ dualizes to a map $(k[G_i]\otimes k[G_i])^*\to k[G_i]^*$. Precomposing with $k[G_i]^*\otimes k[G_i]^*\to (k[G_i]\otimes k[G_i])^*$, thus yields a map $\Delta_i^*\colon k[G_i]^*\otimes k[G_i]^*\to k[G_i]^*$.

For $i_1,\ldots,i_n\in\{1,2\}$ and $\ell\in \{1,\ldots,n\}$ we define a map $$^*\psi^\ell_{i_1,\ldots,i_n}\colon T^*_{i_1,\ldots,i_\ell,i_\ell,\ldots,i_n}\to T^*_{i_1,\ldots,i_n}$$ by applying $\Delta_{i_\ell}^*$ on $k[G_{i_\ell}]^*\otimes k[G_{i_\ell}]^*$ and the identity on all the other factors. Similarly, we define a map
$$^*\delta^\ell_{i_1,\ldots,i_n}\colon T^*_{i_1,\ldots,i_{\ell-1},i_{\ell+1},\ldots,i_n}\to T^*_{i_1,\ldots,i_n}$$
by sending $a_1\otimes\ldots\otimes a_{n-1}$ to $a_1\otimes\ldots\otimes a_{\ell-1}\otimes \epsilon_{i_\ell}\otimes a_\ell,\ldots,a_{n-1}$.
We also define 
$$^*\theta^\ell_{i_1,\ldots,i_n}\colon k[G_{i_1}]^*\otimes \ldots\otimes k[G_{i_{\ell-1}}]^*\otimes k[H]^*\otimes k[G_{i_{\ell+1}}]^*\otimes\ldots\otimes k[G_{i_n}]^*\longrightarrow T^*_{i_1,\ldots,i_n} $$
by applying $\alpha_{i_\ell}^*\colon k[H]^*\to k[G_{i_\ell}]^*$ on the $\ell$-th factor and the identity on all the other factors.

A functional $F\in T(k[G_1]^*\oplus k[G_2]^*)^*$ is called \emph{multiplicatively coherent} if 
$$
\xymatrix{
	T^*_{i_1,\ldots,i_\ell,i_\ell,\ldots,i_n} \ar^-{^*\psi^\ell_{i_1,\ldots,i_n}}[rr] \ar_-F[rd] & & T^*_{i_1,\ldots,i_n} \ar^-F[ld] \\
	& 	k &
} 	
$$ 	
commutes for every $^*\psi^\ell_{i_1,\ldots,i_n}$. If 
$$
\xymatrix{
	T^*_{i_1,\ldots,i_{\ell-1},i_{\ell+1},\ldots,i_n} \ar_-F[rd] \ar^-{^*\delta^\ell_{i_1,\ldots,i_n}}[rr] & & T^*_{i_1,\ldots,i_n} \ar^-F[ld]\\
	& k &
}
$$  	
commutes for every $^*\delta^\ell_{i_1,\ldots,i_n}$, then $F$ is \emph{unitaly coherent}.
If
$$
\xymatrix{
	&  k[G_{i_1}]^*\otimes \ldots\otimes k[G_{i_{\ell-1}}]^*\otimes k[H]^*\otimes k[G_{i_{\ell+1}}]^*\otimes\ldots\otimes k[G_{i_n}]^* \ar^-{^*\theta^\ell_{i_1,\ldots,i_{\ell-1},2,i_{\ell+1},\ldots,i_n}}[rd] \ar_-{^*\theta^\ell_{i_1,\ldots,i_{\ell-1},1,i_{\ell+1},\ldots,i_n}}[ld] & \\
T^*_{i_1\ldots, i_{\ell-1},1,i_{\ell+1},\ldots,i_n} \ar_-F[rd]	& & T^*_{i_1\ldots, i_{\ell-1},2,i_{\ell+1},\ldots,i_n} \ar^-F[ld] \\
	& k &  
}
$$
commutes for all $n\geq 1$, $\ell\in\{1,\ldots,n\}$ and $i_1,\ldots,i_{\ell-1},i_{\ell+1},\ldots,i_n\in\{1,2\}$, then $F$ is $H$-coherent. Finally, $F$ is \emph{coherent} if it is multiplicatively, unitaly and $H$\=/coherent.

The following lemma shows that whether or not $f\in\hat{T}(k[G_1]\oplus k[G_2])$ is coherent can be tested on $\fun(f)$.

\begin{lemma} \label{lemma: coherence dualizes}
	Let $f\in \hat{T}(k[G_1]\oplus k[G_2])$. Then $f$ is coherent if and only if $\fun(f)$ is coherent. 
\end{lemma}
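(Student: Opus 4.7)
The plan is to translate the equalities defining coherence of $f$ in the tensor spaces $T_{i_1,\ldots,i_n}$ into functional equalities by pairing against $T^*_{i_1,\ldots,i_n}$, exploiting the transpose relations between $\psi^\ell$, $\delta^\ell$, $\theta^\ell$ and their starred counterparts together with the elementary fact that $V_1^* \otimes \cdots \otimes V_n^*$ separates points of $V_1 \otimes \cdots \otimes V_n$ for arbitrary $k$-vector spaces $V_1,\ldots,V_n$.

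The first ingredient is a trivial pairing computation. Set $V := k[G_1] \oplus k[G_2]$ and view $T^*_{i_1,\ldots,i_n}$ as a subspace of $(V^*)^{\otimes n}$ via extension by zero on the complementary summands. Then for $a \in T^*_{i_1,\ldots,i_n}$ and $v \in T_{j_1,\ldots,j_n}$ the evaluation $a(v)$ vanishes unless $(i_1,\ldots,i_n) = (j_1,\ldots,j_n)$. Consequently $\fun(f)$ restricted to $T^*_{i_1,\ldots,i_n}$ is precisely the functional $a \mapsto a(f_{i_1,\ldots,i_n})$. The second ingredient is a direct unwinding of the definitions, which yields the transpose identities
\[ ({}^*\psi^\ell_{i_1,\ldots,i_n}(a))(v) = a(\psi^\ell_{i_1,\ldots,i_n}(v)),\ \ ({}^*\delta^\ell_{i_1,\ldots,i_n}(a))(v) = a(\delta^\ell_{i_1,\ldots,i_n}(v)),\ \ ({}^*\theta^\ell_{i_1,\ldots,i_n}(a))(v) = a(\theta^\ell_{i_1,\ldots,i_n}(v)), \]
for the appropriate $a$ and $v$, since in each case the starred map applies the transpose of $\Delta_{i_\ell}$, $\epsilon_{i_\ell}$, or $\alpha_{i_\ell}$ in the corresponding slot.

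Combining these two ingredients, the multiplicative coherence equation for $\fun(f)$, namely $\fun(f)({}^*\psi^\ell_{i_1,\ldots,i_n}(a)) = \fun(f)(a)$ for every $a \in T^*_{i_1,\ldots,i_\ell,i_\ell,\ldots,i_n}$, becomes after applying the first observation to both sides and the transpose identity to the left hand side the statement
\[ a\bigl(\psi^\ell_{i_1,\ldots,i_n}(f_{i_1,\ldots,i_n})\bigr) \;=\; a\bigl(f_{i_1,\ldots,i_\ell,i_\ell,\ldots,i_n}\bigr) \qquad \text{for all } a \in T^*_{i_1,\ldots,i_\ell,i_\ell,\ldots,i_n}. \]
The multiplicative coherence (\ref{eq: comultiplication condition}) of $f$ obviously implies this. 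For the converse, the separation property of $T^*_{i_1,\ldots,i_\ell,i_\ell,\ldots,i_n}$ on $T_{i_1,\ldots,i_\ell,i_\ell,\ldots,i_n}$ promotes the functional agreement to an honest equality of tensors, recovering (\ref{eq: comultiplication condition}).

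The verbatim same argument, with $\delta^\ell$ or $\theta^\ell$ in place of $\psi^\ell$ and the corresponding conditions (\ref{eq: counit condition}) and (\ref{eq: H coherence}), handles unital coherence and $H$-coherence. No serious obstacle is anticipated; the only substantive point beyond bookkeeping is the point-separation lemma for tensor products of dual spaces, which is a standard exercise in linear algebra.
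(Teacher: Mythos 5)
Your proposal is correct and follows essentially the same route as the paper's proof: establish the transpose identities relating $\psi^\ell,\delta^\ell,\theta^\ell$ to their starred duals, then pass between the tensor identities defining coherence of $f$ and the functional identities defining coherence of $\fun(f)$, using that $T^*_{i_1,\ldots,i_n}$ separates points of $T_{i_1,\ldots,i_n}$ for the converse direction. The only difference is presentational — you make the point-separation step and the vanishing of mismatched-index pairings explicit, which the paper leaves implicit.
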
 
\begin{proof} 
	We first treat the case of multiplicative coherence.
	Fix $n\geq 1$, ${i_1,\ldots,i_n}\in\{1,2\}^n$ and $\ell\in\{1,\ldots,n\}$.	
	Let $a_1\otimes\ldots\otimes a_{n+1}\in T^*_{i_1,\ldots,i_\ell,i_\ell,\ldots,i_n}$.
	As $\Delta_{i_\ell}^*(a_\ell\otimes a_{\ell+1})(h)=(a_\ell\otimes a_{\ell+1})(\Delta_{i_\ell}(h))$ for $h\in k[G_{i_\ell}]$, we see that
	\begin{align*}
	^*\psi^\ell_{i_1,\ldots,i_n}(a_1&\otimes\ldots\otimes a_{n+1})(h_1\otimes\ldots\otimes h_n)= \\
	&=(a_1\otimes\ldots\otimes a_{\ell-1}\otimes\Delta_{i_\ell}^*(a_\ell\otimes a_{\ell+1})\otimes a_{\ell+2}\otimes\ldots\otimes a_{n+1})(h_1\otimes\ldots\otimes h_n)= \\
	&=(a_1\otimes\ldots\otimes a_{n+1})(h_1\otimes\ldots\otimes \Delta_{i_\ell}(h_\ell)\otimes\ldots\otimes h_n)= \\
	&=(a_1\otimes\ldots\otimes a_{n+1})(\psi^\ell_{i_1,\ldots,i_n}(h_1\otimes\ldots\otimes h_n))
	\end{align*}
	for $h_1\otimes\ldots\otimes h_n\in T_{i_1,\ldots,i_n}$. So $^*\psi^\ell_{i_1,\ldots,i_n}(a)(h)=a(\psi^\ell_{i_1,\ldots,i_n}(h))$ for all $a\in T^*_{i_1,\ldots,i_\ell,i_\ell,\ldots,i_n}$ and $h\in T_{i_1,\ldots,i_n}$.

	Assume that $f$ is multiplicatively coherent. Then, writing $f$ as is in (\ref{eq: parts of f}), we have $^*\psi^\ell_{i_1,\ldots,i_n}(a)(f_{i_1,\ldots,i_n})=a(f_{i_1,\ldots,i_\ell,i_\ell,\ldots,i_n})$ for all $a\in 	T^*_{i_1,\ldots,i_\ell,i_\ell,\ldots,i_n}$. Thus the diagram
	$$
	\xymatrix{
		T^*_{i_1,\ldots,i_\ell,i_\ell,\ldots,i_n} \ar^-{^*\psi^\ell_{i_1,\ldots,i_n}}[rr] \ar_-{\fun(f)}[rd] & & T^*_{i_1,\ldots,i_n} \ar^-{\fun(f)}[ld] \\
		& 	k &
	} 	
	$$ commutes and $\fun(f)$ is multiplicatively coherent.

	Conversely, assume that $\fun(f)$ is multiplicatively coherent.
	Then 
	$$a(\psi^\ell_{i_1,\ldots,i_n}(f_{i_1,\ldots,i_n}))={^*\psi^\ell_{i_1,\ldots,i_n}}(a)(f_{i_1,\ldots,i_n})=a(f_{i_1,\ldots,i_\ell,i_\ell,\ldots,i_n})$$
	for all $a\in T^*_{i_1,\ldots,i_\ell,i_\ell,\ldots,i_n}$ and so $\psi^\ell_{i_1,\ldots,i_n}(f_{i_1,\ldots,i_n})=f_{i_1,\ldots,i_\ell,i_\ell,\ldots,i_n}$, i.e., $f$ is multiplicatively coherent.
	
	The case of unital coherence is similar, using that $^*\delta^\ell_{i_1,\ldots,i_n}(a)(h)=a(\delta^\ell_{i_1,\ldots,i_n}(h))$ for $a\in T^*_{i_1,\ldots,i_{\ell-1},i_{\ell+1},\ldots,i_n}$ and $h\in T_{i_1,\ldots,i_n}$.	
	
	For the case of $H$-coherence one uses that
	$${^*\theta}^\ell_{i_1,\ldots,i_{\ell-1},i,i_{\ell+1},\ldots,i_n}(a)(h)=a(\theta^\ell_{i_1,\ldots,i_{\ell-1},i,i_{\ell+1},\ldots,i_n}(h))$$
	for  $$a\in  k[G_{i_1}]^*\otimes \ldots\otimes k[G_{i_{\ell-1}}]^*\otimes k[H]^*\otimes k[G_{i_{\ell+1}}]^*\otimes\ldots\otimes k[G_{i_n}]^*,$$
	$h\in T_{i_1,\ldots,i_{\ell-1},i,i_{\ell+1},\ldots,i_n} $
	and $i=1,2$.
	%
\end{proof}

%
%

The following lemma is the main reason, why the dual perspective is helpful for showing that $k[G_1*_HG_2]$ is a subcoalgebra of $C(k[G_1]\oplus k[G_2])$. 

\begin{lemma} \label{lemma; coherence preserved under translation}
	Let $F$ be a coherent functional on $T(k[G_1]^*\oplus k[G_2]^*)$. Then $R_bF$ is coherent for every $b\in T(k[G_1]^*\oplus k[G_2]^*)$. Similarly for $L_aF$.
\end{lemma}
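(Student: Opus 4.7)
The plan is to verify the three coherence conditions for $R_bF$ separately, each time reducing to the corresponding coherence of $F$ at a shifted position in a longer tensor word. By linearity we may assume $b\in T^*_{j_1,\ldots,j_m}$ is homogeneous. The first step is to record the \emph{locality} of the three families of dual maps: each of $^*\psi^\ell_{i_1,\ldots,i_n}$, $^*\delta^\ell_{i_1,\ldots,i_n}$, $^*\theta^\ell_{i_1,\ldots,i_n}$ acts as the identity on every tensor factor outside position $\ell$ (or positions $\ell,\ell{+}1$ in the case of $^*\psi$). Since the product in $T(k[G_1]^*\oplus k[G_2]^*)$ is concatenation, locality yields commutation identities of the shape
\[
{^*\psi}^\ell_{i_1,\ldots,i_n,j_1,\ldots,j_m}(cb) \;=\; {^*\psi}^\ell_{i_1,\ldots,i_n}(c)\,b
\]
on suitable homogeneous $c$, together with the analogous identities for $^*\delta^\ell$ and $^*\theta^\ell$.

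Granted these commutation identities, multiplicative coherence of $R_bF$ at multi-index $(i_1,\ldots,i_n)$ and position $\ell$ is obtained in three steps: starting from $R_bF(a)=F(ab)$, apply the multiplicative coherence of $F$ at position $\ell$ of the extended word of length $n+m+1$; then use the commutation identity to pull $b$ outside; and finally recognise the resulting expression as $R_bF\bigl({^*\psi}^\ell_{i_1,\ldots,i_n}(a)\bigr)$. The unital and $H$-coherence of $R_bF$ are deduced by the very same template, with $^*\delta^\ell$, respectively $^*\theta^\ell$, in place of $^*\psi^\ell$, and invoking the corresponding coherence of $F$ on the extended word.

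The case of $L_aF$ is entirely symmetric: left-concatenation by a homogeneous $a\in T^*_{i_1,\ldots,i_n}$ shifts the operative position from $\ell$ to $n+\ell$, so one invokes the coherence of $F$ at position $n+\ell$ of the extended word, together with the mirror-image commutation identity (locality of the three maps with respect to left concatenation). The only real (but mild) obstacle throughout is the multi-index bookkeeping—consistently tracking which position of the extended word is being doubled, decorated with a counit, or decorated with $\alpha_{i_\ell}$ after concatenation—but no conceptual ingredient beyond locality and the coherence of $F$ is required.
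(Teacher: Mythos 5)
Your proof is correct and follows essentially the same route as the paper: both arguments rest on the locality of the dual maps ${}^*\psi^\ell$, ${}^*\delta^\ell$, ${}^*\theta^\ell$ with respect to concatenation, which identifies ${}^*\psi^\ell_{i_1,\ldots,i_n}(c)\,b$ with ${}^*\psi^\ell_{i_1,\ldots,i_n,j_1,\ldots,j_m}(cb)$ and then invokes the coherence of $F$ on the extended word. The only cosmetic difference is that the paper adds a further reduction to the case $b\in k[G_i]^*$ via $R_{b_1b_2}F=R_{b_1}(R_{b_2}F)$ before the diagram chase, whereas you handle a general homogeneous $b$ in one step.
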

\begin{proof}
	We first treat the case of multiplicative coherence.
	As the sum of multiplicatively coherent functionals is multiplicatively coherent and $R_{b_1+b_2}F=R_{b_1}F+R_{b_2}F$, we can reduce to the case that $b=b_1\otimes\ldots\otimes b_n$ is a pure tensor in some $T^*_{i_1,\ldots,i_n}$. Moreover, as $R_{b_1b_2}F=R_{b_1}(R_{b_2}F)$, we are reduced to the case $b\in k[G_i]^*$ for some $i\in\{1,2\}$.
	Consider the diagram	
	$$
	\xymatrix{
		T^*_{i_1,\ldots,i_\ell,i_\ell,\ldots,i_n} \ar^{^*\psi^\ell_{i_1,\ldots,i_n}}[rr] \ar[d] & & T^*_{i_1,\ldots,i_n} \ar[d] \\
		T^*_{i_1,\ldots,i_\ell,i_\ell,\ldots,i_n}\otimes k[G_i]^* \ar^-\chi[rr] \ar_-F[rd] & & T^*_{i_1,\ldots,i_n}\otimes k[G_i]^* \ar^-F[ld] \\
		& 	k &
	} 	
	$$ 	
	where $\chi$ is the base change of $^*\psi^\ell_{i_1,\ldots,i_n}$ and the two vertical arrows are multiplication with $b$ (using the noncommutative multiplication of the tensor algebra). Then the top square commutes.
	As $\chi= {^*\psi^\ell_{i_1,\ldots,i_n,i}}$ and $F$ is multiplicatively coherent, the lower triangle commutes. So the whole diagram commutes, showing that $R_bF$ is multiplicatively coherent.	
	
	The argument for unital coherence and $H$-coherence is similar.
	 The proof for $L$ instead of $R$ is analogous.
\end{proof}

We can finally show that $k[G_1*_HG_2]$ is a subcoalgebra of $C(k[G_1]\oplus k[G_2])$. Let $f\in k[G_1*_HG_2]$. We have to show that $$\Delta(f)\in k[G_1*_HG_2]\otimes k[G_1*_HG_2]\subseteq C(k[G_1]\oplus k[G_2])\otimes C(k[G_1]\oplus k[G_2]).$$
From Lemma \ref{lemma: Hazewinkel} we know that $\Delta(f)$ can be written as $\Delta(f)=\sum_{i=1}^rg_i\otimes h_i$ with $g_i,h_i\in C(k[G_1]\oplus k[G_2])$, $\fun(g_1),\ldots,\fun(g_r)\in Rf$ and $\fun(h_1),\ldots,\fun(h_i)\in Lf$. Lemma \ref{lemma; coherence preserved under translation} implies that the functionals $\fun(g_1),\ldots,\fun(g_r),\fun(h_1),\ldots,\fun(h_i)$ are coherent. So Lemma \ref{lemma: coherence dualizes} implies that $g_1,\ldots,g_r,h_1,\ldots,h_r$ are coherent, i.e., they lie in $k[G_1*_HG_2]$ as desired.

So $k[G_1*_HG_2]$ is a subbialgebra of $C(k[G_1]\oplus k[G_2])$. The next step is to show that $k[G_1*_HG_2]$ is a Hopf algebra, i.e., we need to find the antipode. Define $$S\colon \hat{T}(k[G_1]\oplus k[G_2])\to \hat{T}(k[G_1]\oplus k[G_2])$$
by applying the antipodes $S_i\colon k[G_i]\to k[G_i]$ ($i=1,2$) on each of the factors and then reversing the order of the factors. So
$S\colon T_{i_1,\ldots,i_n}\to T_{i_n,\ldots,i_1}$ is given by 
$S(h_1\otimes\ldots\otimes h_n)=S_{i_n}(h_n)\otimes\ldots\otimes S_{i_1}(h_1)$. We would like to know that $S(C(k[G_1]\oplus k[G_2]))\subseteq C(k[G_1]\oplus k[G_2])$.

Recall that for a vector space $V$, we defined $\hat{T}V\hat{\otimes}\hat{T}V=\prod_{i,j\in\nn}V^{\otimes i} \otimes V^{\otimes j}$. Define $\tau\colon \hat{T}V\hat{\otimes}\hat{T}V\to \hat{T}V\hat{\otimes}\hat{T}V$ by $$\tau\colon V^{\otimes i}\otimes V^{\otimes j}\to V^{\otimes j}\otimes V^{i},\ a\otimes b\mapsto b\otimes a. $$ 

So in the matrix notation (\ref{eq: matrix notation}), the map $\tau$ transposes the matrix and then applies the flip to every matrix entry. From this description it is clear that $\tau$ maps $\hat{T}V\otimes\hat{T}V$ to $\hat{T}V\otimes\hat{T}V$.

For $f\in \hat{T}(k[G_1]\oplus k[G_2])$, a direct computation shows that $\Delta(S(f))=\tau((S\otimes S)(\Delta(f)))$. So if $f\in C(k[G_1]\oplus k[G_2])$, then $\Delta(S(f))\in \hat{T}(k[G_1]\oplus k[G_2])\otimes \hat{T}(k[G_1]\oplus k[G_2])$, i.e., $S(f)\in C(k[G_1]\oplus k[G_2])$.
Thus $S$ restricts to a map $S\colon C(k[G_1]\oplus k[G_2])\to C(k[G_1]\oplus k[G_2])$. The commutativity of the diagrams
$$
\xymatrix{
	T_{i_1,\ldots,i_n} \ar^-{\psi^\ell_{i_1,\ldots,i_n}}[rr] \ar_S[d] & & T_{i_1,\ldots,i_\ell,i_\ell,\ldots,i_n} \ar^S[d] \\
	T_{i_n,\ldots,i_1} \ar^-{\psi^{n-\ell+1}_{i_n,\ldots,i_1}}[rr] & & T_{i_n,\ldots,i_\ell,i_\ell,\ldots, i_1}
}
$$

$$
\xymatrix{
	T_{i_1,\ldots,i_n} \ar^-{\delta^\ell_{i_1,\ldots,i_n}}[rr] \ar_-S[d]
	& & T_{i_1,\ldots,i_{\ell-1},i_{\ell+1},\ldots,i_n} \ar^-S[d] \\
	T_{i_n,\ldots,i_1} \ar^-{\delta^{n-\ell+1}_{i_1,\ldots,i_n}}[rr]	& & T_{i_n,\ldots,i_{\ell+1},i_{\ell-1},\ldots,i_1}
}
$$
and 
$$
\xymatrix{
T_{i_1,\ldots,1,\ldots i_n} \ar_-S[d] \ar^-{\theta^\ell_{i_1,\ldots,1,\dots,i_n}}[rr] & & k[G_{i_1}]\otimes\ldots\otimes k[H]\otimes\ldots\otimes k[G_{i_n}] \ar[d] & & T_{i_1,\ldots, 2, \ldots, i_n}  \ar_-{\theta^\ell_{i_1,\ldots,2,\dots,i_n}}[ll] \ar^-S[d]\\
T_{i_n,\ldots,1,\ldots i_1} \ar^-{\theta^{n-\ell+1}_{i_n,\ldots,1,\dots,i_1}}[rr] & & k[G_{i_n}]\otimes\ldots\otimes k[H]\otimes\ldots\otimes k[G_{i_1}] & & T_{i_n,\ldots, 2, \ldots, i_1}	\ar_-{\theta^{n-\ell+1}_{i_n,\ldots,2,\dots,i_1}}[ll]
}
$$
show that indeed $S$ restricts to a map $S\colon k[G_1*_HG_2]\to k[G_1*_HG_2]$. To see that $S$ is an antipode for the bialgebra $k[G_1*_HG_2]$, we need to show that
\begin{equation}
\label{eq: for Hopf algebra}
\xymatrix{
	k[G_1*_HG_2] \ar^-\Delta[r] \ar_\epsilon[d] & k[G_1*_HG_2]\otimes k[G_1*_HG_2] \ar^-{S\cdot \id}[d] \\
	k \ar[r] & k[G_1*_HG_2]
}
\end{equation}
commutes.

Let $f\in 	k[G_1*_HG_2]$. Note that the product in $\hat{T}(k[G_1]\oplus k[G_2])$ is such that for $a\in T_{i_1,\ldots,i_n}$ and $b\in T_{j_1,\ldots,j_m}$ we have $ab=0$, unless $m=n$ and $i_1=j_1,\ldots,i_n=j_n$.
Thus, when trying to determine the image of $\Delta(f)$ under $S\cdot\id$, only the components of $\Delta(f)$ in $T_{i_n,\ldots,i_1}\otimes T_{i_1,\ldots,i_n}\subseteq \hat{T}(k[G_1]\oplus k[G_2])\hat{\otimes}\hat{T}(k[G_1]\oplus k[G_2])$ are relevant, as all other components map to zero under $S\cdot \id$. So the component $(S\cdot\id)(\Delta(f))_{i_1,\ldots,i_n}$ of $(S\cdot\id)(\Delta(f))$ in $T_{i_1,\ldots,i_n}$, is the image of the component $f_{i_n,\ldots,i_1,i_1,\ldots,i_n}$ of $f$ in $T_{i_n,\ldots,i_1,i_1,\ldots,i_n}$ under the map
$$
\xi_{i_1,\ldots,i_n}\colon T_{i_n,\ldots,i_1,i_1,\ldots,i_n}\to T_{i_1,\ldots,i_n},\quad a_n\otimes\ldots\otimes a_1\otimes b_1\otimes\ldots\otimes b_n\mapsto S_{i_1}(a_1)b_1\otimes\ldots\otimes S_{i_n}(a_n)b_n.
$$

We will show, by induction on $n$, that $\xi_{i_1,\ldots,i_n}(f_{i_n,\ldots,i_1,i_1,\ldots,i_n})=\epsilon(f)\in k$ as desired.
For $n=1$, $T_{i_1,i_1}=k[G_{i_1}]\otimes k[G_{i_1}]$, $T_{i_1}=k[G_{i_1}]$ and the commutative diagram
$$
\xymatrix{
	k[G_{i_1}]\otimes k[G_{i_1}] \ar^-{\xi_{i_1,i_i}=S_{i_1}\cdot\id}[rr] & & k[G_{i_1}] \\
	k[G_{i_1}]\ar^-{\psi^1_{i_1}=\Delta_{i_1}}[u] \ar^-{\epsilon_{i_1}}[rr] & & k \ar[u]
}
$$
shows that $\xi_{i_1,i_1}(f_{i_1,i_1})=\epsilon_{i_1}(f_{i_1})$. But for $n=1$, the condition $\delta^\ell_{i_1,\ldots,i_n}(f_{i_1,\ldots,i_n})=f_{i_1,\ldots,i_{\ell-1},i_{\ell+1},\ldots, i_n}$ means that $\epsilon_{i_1}(f_{i_1})=f_\emptyset=\epsilon(f)$, the degree zero component of $f$. This resolves the $n=1$ case. For $n>1$, the diagram
$$ 
\xymatrix{
	T_{i_n,\ldots,i_1,i_1,\ldots,i_n} \ar^-{\xi_{i_1,\ldots,i_n}}[rd] & \\
	T_{i_n,\ldots,i_1,\ldots,i_n} \ar^-{\psi^n_{i_n,\ldots,i_1,\ldots,i_n}}[u] \ar_{\delta^n_{i_n\ldots,i_1,\ldots,i_n}}[d] & T_{i_1,\ldots,i_n} \\
	T_{i_n,\ldots,i_2,i_2,\ldots,i_n} \ar^-{\xi_{i_2,\ldots,i_n}}[r] & T_{i_2,\ldots,i_n} \ar@{^(->}[u]
}
$$
commutes. (This is probably most conveniently seen by looking at what happens with the tuples of group elements when considering the dual morphisms.) So, by induction, 
\begin{align*}
\xi_{i_1,\ldots,i_n}(f_{i_n,\ldots,i_1,i_1,\ldots,i_n})&=\xi_{i_1,\ldots,i_n}(\psi^n_{i_n,\ldots,i_1,\ldots,i_n}(f_{i_n,\ldots,i_1,\ldots,i_n}))=\\
&=\xi_{i_2,\ldots,i_n}(\delta^n_{i_n,\ldots,i_1,\ldots,i_n}(f_{i_n,\ldots,i_1,\ldots,i_n}))=\xi_{i_2,\ldots,i_n}(f_{i_n,\ldots,i_2,i_2,\ldots,i_n})=\\
&=\epsilon(f).
\end{align*}
Thus (\ref{eq: for Hopf algebra}) commutes and $k[G_1*_HG_2]$ is indeed a Hopf algebra.

%
%
%
%

The map $\pi\colon C(k[G_1]\oplus k[G_2])\to k[G_1]\oplus k[G_2]$ can be restricted to $k[G_1*_HG_2]$ and composing with the projections on $k[G_1]\oplus k[G_2]$ yields maps $p_1\colon k[G_1*_HG_2]\to k[G_1]$ and $p_2\colon k[G_1*_HG_2]\to k[G_1]$.

It is clear that $p_i$ $(i=1,2)$ is a morphism of algebras. Compatibility with the counit follows from (\ref{eq: counit condition}) with $n=1$, whereas compatibility with the comultiplication follows from (\ref{eq: comultiplication condition}) with $n=1$.
The commutativity of the diagram
$$
\xymatrix{ 
& k[G_1*_H G_2] \ar_-{p_1}[ld] \ar^-{p_2}[rd] & \\
k[G_1] \ar_-{\alpha_1}[rd] &  & k[G_2] \ar^{\alpha_2}[ld] \\
& k[H] &	
}
$$
follows from (\ref{eq: H coherence}) with $n=1$.

\begin{prop} \label{prop: fibre product of Hopf algebras}
	The Hopf algebra $k[G_1*_HG_2]$ together with the projections $p_1,p_2$ is the fibre product of $\alpha_1\colon k[G_1]\to k[H]$ and $\alpha_2\colon k[G_2]\to k[H]$ in the category of Hopf algebras.
\end{prop}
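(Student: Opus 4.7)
The plan is as follows. Let $B$ be a commutative Hopf algebra together with algebra morphisms $q_1\colon B\to k[G_1]$ and $q_2\colon B\to k[G_2]$ such that $\alpha_1q_1=\alpha_2q_2$. I need to produce a unique Hopf algebra morphism $\tilde q\colon B\to k[G_1*_HG_2]$ satisfying $p_i\tilde q=q_i$ for $i=1,2$.

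First, combine $q_1,q_2$ into the $k$-linear map $\varphi\colon B\to k[G_1]\oplus k[G_2]$, $b\mapsto(q_1(b),q_2(b))$. Since $k[G_1]\oplus k[G_2]=k[G_1]\times k[G_2]$ is the product algebra and each $q_i$ is an algebra morphism, $\varphi$ is itself an algebra morphism. By the adjunction $A\rightsquigarrow C(A)$ between bialgebras and algebras recalled in the text, there is a unique bialgebra morphism $\tilde\varphi\colon B\to C(k[G_1]\oplus k[G_2])$ with $\pi\tilde\varphi=\varphi$, given explicitly by $\tilde\varphi(b)=(\varepsilon_B(b),\varphi(b),\varphi^{\otimes 2}(\mu_2(b)),\varphi^{\otimes 3}(\mu_3(b)),\ldots)$.

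The core step is to prove that $\tilde\varphi$ factors through $k[G_1*_HG_2]$, i.e. that every $\tilde\varphi(b)$ is coherent. Writing $\varphi=q_1+q_2$ (via $k[G_i]\hookrightarrow k[G_1]\oplus k[G_2]$) and using Sweedler notation $\mu_n(b)=\sum b_{(1)}\otimes\cdots\otimes b_{(n)}$, one finds
$$\tilde\varphi(b)_{i_1,\ldots,i_n}=\sum q_{i_1}(b_{(1)})\otimes\cdots\otimes q_{i_n}(b_{(n)})\in T_{i_1,\ldots,i_n}.$$
Multiplicative coherence (\ref{eq: comultiplication condition}) then follows because each $q_{i_\ell}$ is a coalgebra morphism, so $\Delta_{i_\ell}(q_{i_\ell}(b_{(\ell)}))=\sum q_{i_\ell}(b_{(\ell)(1)})\otimes q_{i_\ell}(b_{(\ell)(2)})$, and the resulting tensor rearranges via coassociativity of $B$ into $\tilde\varphi(b)_{i_1,\ldots,i_\ell,i_\ell,\ldots,i_n}$. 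Unital coherence (\ref{eq: counit condition}) follows from $\epsilon_{i_\ell}\circ q_{i_\ell}=\epsilon_B$ together with the counit axiom $\sum\epsilon_B(b_{(\ell)})b_{(1)}\otimes\cdots\otimes\widehat{b_{(\ell)}}\otimes\cdots\otimes b_{(n)}$ collapsing to $\mu_{n-1}(b)$. Finally, $H$-coherence (\ref{eq: H coherence}) is a direct consequence of the hypothesis $\alpha_1q_1=\alpha_2q_2$ applied termwise to the Sweedler factors $b_{(\ell)}$.

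With $\tilde\varphi$ landing in $k[G_1*_HG_2]$, we obtain a bialgebra morphism $\tilde q\colon B\to k[G_1*_HG_2]$, and since both source and target are Hopf algebras, $\tilde q$ is automatically compatible with the antipodes. The equalities $p_i\tilde q=q_i$ follow from $\pi\tilde\varphi=\varphi$ composed with the projection $k[G_1]\oplus k[G_2]\twoheadrightarrow k[G_i]$. For uniqueness, any Hopf algebra morphism $\tilde q'$ with $p_i\tilde q'=q_i$ yields, after composition with the inclusion $k[G_1*_HG_2]\hookrightarrow C(k[G_1]\oplus k[G_2])$, a coalgebra morphism whose composition with $\pi$ equals $\varphi$; the uniqueness clause in the universal property of the cofree coalgebra then forces it to coincide with $\tilde\varphi$, whence $\tilde q'=\tilde q$.

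The main obstacle I anticipate is the coherence verification in the second paragraph: each of the three coherence conditions must be carefully matched to a distinct structural input (coassociativity and $q_i$ being a coalgebra morphism; the counit axiom; and the amalgamation hypothesis $\alpha_1q_1=\alpha_2q_2$), and keeping Sweedler indices aligned with the positional indices $i_1,\ldots,i_n$ is the only real bookkeeping in the proof.
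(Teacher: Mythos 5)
Your proposal is correct and follows essentially the same route as the paper: combine $q_1,q_2$ into an algebra map to $k[G_1]\oplus k[G_2]$, lift it via the cofree-coalgebra adjunction, check the three coherence conditions (you do this in Sweedler notation where the paper draws commutative diagrams, but the content is identical), and deduce uniqueness from the universal property of $C(k[G_1]\oplus k[G_2])$. The only nitpick is that your test data $q_1,q_2$ should be declared as Hopf (or at least bialgebra) morphisms from the start, since your coherence verification uses that they are coalgebra morphisms.
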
	
\begin{proof}
	Let $k[G]$ be a Hopf algebra with morphisms $q_1\colon k[G]\to k[G_1]$ and $q_2\colon k[G]\to k[G_2]$ of Hopf algebras such that $\alpha_1q_1=\alpha_2q_2$. We then have a morphism $\varphi=(q_1,q_2)\colon k[G]\to k[G_1]\oplus k[G_2]$ of $k$-algebras. By the universal property of $C(k[G_1]\oplus k[G_2])$, there exists a unique morphism $\tilde{\varphi}\colon k[G]\to C(k[G_1]\oplus k[G_2])$ of bialgebras such that $\pi\tilde{\varphi}=\varphi$.
	
	We claim that $\tilde{\varphi}$ maps $k[G]$ into $k[G_1*_HG_2]\subseteq C(k[G_1]\oplus k[G_2])$. Let $f\in k[G]$. Then
	$$\tilde{\varphi}(f)=(\epsilon_G(f),\varphi(f),\varphi^{\otimes 2}(\Delta_G^2(f)),\varphi^{\otimes 3}(\Delta_G^3(f)),\ldots).$$
	The commutative diagram
	$$
	\xymatrix{
	& k[G]^{\otimes n} \ar^-{\varphi^{\otimes n}}[r] & (k[G_1]\oplus k[G_2])^{\otimes n} \ar[r] & T_{i_1,\ldots,i_n} \ar^-{\psi^\ell_{i_1,\ldots,i_n}}[dd] \\ 
	k[G] \ar^-{\Delta_G^n}[ru] \ar_-{\Delta_G^{n+1}}[rd] & & & \\
		& k[G]^{\otimes (n+1)} \ar^-{\varphi^{\otimes(n+1)}}[r] & (k[G_1]\oplus k[G_2])^{\otimes (n+1)} \ar[r] & T_{i_1,\ldots,i_\ell,i_\ell,\ldots,i_n}
}
	$$
shows that $\tilde{\varphi}(f)$ is multiplicatively coherent. The commutative diagram
		$$
	\xymatrix{
		& k[G]^{\otimes n} \ar^-{\varphi^{\otimes n}}[r] & (k[G_1]\oplus k[G_2])^{\otimes n} \ar[r] & T_{i_1,\ldots,i_n} \ar^-{\delta^\ell_{i_1,\ldots,i_n}}[dd] \\ 
		k[G] \ar^-{\Delta_G^n}[ru] \ar_-{\Delta_G^{n-1}}[rd] & & & \\
		& k[G]^{\otimes (n-1)} \ar^-{\varphi^{\otimes(n-1)}}[r] & (k[G_1]\oplus k[G_2])^{\otimes (n-1)} \ar[r] & T_{i_1,\ldots,i_{\ell-1},i_{\ell+1},\ldots,i_n}
	}
	$$
	shows that $\tilde{\varphi}(f)$ is unitaly coherent.
Finally, the commutative diagram 
	$$
\xymatrix{
	& k[G]^{\otimes n} \ar^-{\varphi^{\otimes n}}[r] & (k[G_1]\oplus k[G_2])^{\otimes n} \ar[r] & T_{i_1,\ldots,1,\ldots,i_n} \ar^-{\theta^\ell_{i_1,\ldots,1,\ldots,i_n}}[d] \\ 
	k[G] \ar^-{\Delta_G^n}[ru] \ar_-{\Delta_G^{n}}[rd] & & &  k[G_{i_1}]\otimes\ldots\otimes k[H]\otimes\ldots\otimes k[G_{i_n}] \\
	& k[G]^{\otimes n} \ar^-{\varphi^{\otimes n}}[r] & (k[G_1]\oplus k[G_2])^{n} \ar[r] & T_{i_1,\ldots,2,\ldots,i_n} \ar_-{\theta^\ell_{i_1,\ldots,2,\ldots,i_n}}[u]
}
$$
shows that $\tilde{\varphi}(f)$ is $H$-coherent. So $\tilde{\varphi}(f)$ is coherent and we have a well-defined morphism $\tilde{\varphi}\colon k[G]\to k[G_1*_H G_2]$ with $p_1\tilde{\varphi}=q_1$ and $p_2\tilde{\varphi}=q_2$.
	
	If $\varphi'\colon k[G]\to k[G_1*_HG_2]$ is another morphism with $p_1\varphi'=q_1$ and $p_2\varphi'=q_2$, then $\pi\varphi'=\varphi$ and the universal property of $C(k[G_1]\oplus k[G_2])$ yields $\varphi'=\tilde{\varphi}$.
\end{proof}

\subsection{Properties of amalgamated free products}
\label{subsec: Properties of amalgamated free products}

In this section we discuss properties of amalgamated free products and their compatibility with other constructions like proalgebraic completion or base change. Throughout Section \ref{subsec: Properties of amalgamated free products} we assume that $H\to G_1$ and $H\to G_2$ are two morphisms of $k$-groups.

\begin{lemma} \label{lemma: quotient map}
	The canonical morphism $G_1*G_2\to G_1*_HG_2$ is a quotient map.
\end{lemma}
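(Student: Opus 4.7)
The plan is to show that the dual Hopf algebra map $\iota^*\colon k[G_1*_HG_2]\to k[G_1*G_2]$ of the canonical morphism $\iota\colon G_1*G_2\to G_1*_HG_2$ is injective; by the convention recalled in the introduction, this is equivalent to $\iota$ being a quotient map.

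My preferred approach is conceptual and avoids the explicit Hopf algebra description. Use the standard factorization of a morphism of affine group schemes as a faithfully flat quotient followed by a closed immersion to write $\iota = j\circ \pi$ with $\pi\colon G_1*G_2\twoheadrightarrow G$ a quotient map and $j\colon G\hookrightarrow G_1*_HG_2$ a closed immersion. It then suffices to prove that $j$ is an isomorphism. Write $i_1',i_2'$ for the structural maps into $G_1*G_2$ and set $j_\ell = \pi\circ i_\ell'\colon G_\ell\to G$ for $\ell=1,2$. Because $\iota\circ i_\ell' = i_\ell$ and $i_1\f_1 = i_2\f_2$ in $G_1*_HG_2$, we have $j\circ j_1\circ \f_1 = j\circ j_2\circ \f_2$; since closed immersions of affine group schemes are monomorphisms, this yields $j_1\f_1 = j_2\f_2$. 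The universal property of the pushout $G_1*_HG_2$ then produces a unique $\psi\colon G_1*_HG_2\to G$ with $\psi\circ i_\ell = j_\ell$ for $\ell=1,2$. Post-composing with $j$ gives $j\circ\psi\circ i_\ell = j\circ j_\ell = i_\ell$, so by uniqueness in the same universal property, $j\circ\psi = \id_{G_1*_HG_2}$. Since $j$ is a monomorphism, this right inverse is automatically a two-sided inverse, so $j$ is an isomorphism and $\iota = j\pi$ is itself a quotient map.

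Alternatively, the statement can be read off directly from Proposition~\ref{prop: fibre product of Hopf algebras}. When $H=1$ the maps $\alpha_1,\alpha_2$ are the counits of $k[G_1],k[G_2]$, so the $H$-coherence condition (\ref{eq: H coherence}) becomes a consequence of the unital coherence condition (\ref{eq: counit condition}): both sides reduce to applying the counit in the $\ell$-th slot. Hence $k[G_1*G_2]$ is precisely the subalgebra of $C(k[G_1]\oplus k[G_2])$ cut out by multiplicative and unital coherence, while $k[G_1*_HG_2]$ is the further subalgebra cut out by also imposing $H$-coherence. The inclusion of subalgebras is compatible with the projections $p_1,p_2$, so by the uniqueness in the universal property of the fibre product it equals $\iota^*$, which is then manifestly injective.

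The only mild technical input in the first approach is that a closed immersion of affine group schemes is a monomorphism, which is standard; everything else is an unwinding of the universal properties of pushouts and of the image factorization.
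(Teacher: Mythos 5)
Your proposal is correct, and it actually contains two valid proofs. Your second argument is essentially the paper's own proof: the paper disposes of the lemma in one line by observing that both $k[G_1*G_2]$ and $k[G_1*_HG_2]$ are realized inside $C(k[G_1]\oplus k[G_2])$ and that the canonical morphism is dual to the inclusion $k[G_1*_HG_2]\subseteq k[G_1*G_2]$; your observation that for $H=1$ the $H$-coherence condition collapses to unital coherence, so that imposing $H$-coherence for general $H$ only shrinks the subalgebra, is exactly the content of that inclusion, spelled out. Your first argument is genuinely different and worth noting: it never touches the explicit cofree-coalgebra construction, instead factoring the canonical morphism as a quotient followed by a closed immersion and using the pushout's universal property (together with the fact that a closed immersion is a monomorphism, and that a mono with a right inverse is an isomorphism) to show the closed immersion is onto. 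That route is more robust -- it works for pushouts of $k$-groups however they are constructed, e.g. via the Tannakian fibre product of Section~\ref{sec:1}, at the cost of invoking the standard image factorization of morphisms of affine group schemes -- whereas the paper's route is shorter once the Hopf-algebraic construction of Section~\ref{sec:2} is in place, and that construction is needed elsewhere anyway (e.g. for geometric reducedness).
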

\begin{proof}
	The canonical morphism $G_1*G_2\to G_1*_HG_2$ is dual to the inclusion $k[G_1*_H G_2]\subseteq k[G_1*G_2]$ of subalgebras of $C(k[G_1]\oplus k[G_2])$.
\end{proof}

\begin{prop}
	If $G_1$ and $G_2$ are geometrically reduced, then $G_1*_H G_2$ is geometrically reduced.
\end{prop}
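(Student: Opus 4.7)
The plan is to combine the explicit description of $k[G_1*_H G_2]$ from Proposition~\ref{prop: fibre product of Hopf algebras} with two easy reductions.

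First, I would reduce to the case that $k$ is algebraically closed and $G_1,G_2$ are reduced. The universal property of $G_1*_HG_2$ transports to $K$-groups for any field extension $K/k$, so pushouts in the category of $k$-groups commute with base change and we have $(G_1*_HG_2)_{\overline{k}}=(G_1)_{\overline{k}}*_{H_{\overline{k}}}(G_2)_{\overline{k}}$. It therefore suffices to prove that $k[G_1*_HG_2]$ is reduced under the stronger hypothesis that $k$ is algebraically closed and $G_1,G_2$ are reduced.

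Next, Lemma~\ref{lemma: quotient map} provides a quotient map $G_1*G_2\to G_1*_HG_2$, and hence an inclusion $k[G_1*_HG_2]\hookrightarrow k[G_1*G_2]$ of $k$-algebras. Since subrings of reduced rings are reduced, it is enough to show that $k[G_1*G_2]$ is reduced. The explicit description in Section~\ref{sec:2} embeds $k[G_1*G_2]$ into
$$C(k[G_1]\oplus k[G_2])\;\subseteq\; \hat{T}(k[G_1]\oplus k[G_2])\;=\;\prod_{n\geq 0}(k[G_1]\oplus k[G_2])^{\otimes n},$$
and each factor decomposes as a direct sum of the tensor products $k[G_{i_1}]\otimes\cdots\otimes k[G_{i_n}]$ with $i_j\in\{1,2\}$. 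Over an algebraically closed field every reduced algebra is geometrically reduced, and tensor products of geometrically reduced algebras are reduced; hence each such tensor product is reduced. It follows that the direct sum is reduced, the infinite product is reduced, and finally the subring $k[G_1*G_2]$ is reduced, completing the argument.

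The proof is essentially formal once these tools are in place; the only non-trivial input is the classical commutative algebra fact that tensor products of reduced algebras over a perfect field are reduced. The main point to handle carefully is the base-change step, but this is immediate from the universal property of the amalgamated free product.
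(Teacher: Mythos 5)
Your overall strategy is the same as the paper's: reduce to the free product via Lemma~\ref{lemma: quotient map}, embed the Hopf algebra into $\hat{T}(k[G_1]\oplus k[G_2])$, and invoke the fact that tensor products of reduced algebras over a perfect field are reduced. The endgame (products and direct sums of reduced rings are reduced, subrings of reduced rings are reduced) is fine. But your opening reduction contains a genuine error: it is \emph{not} true that pushouts of $k$-groups commute with base change, so the identification $(G_1*_HG_2)_{\overline{k}}=(G_1)_{\overline{k}}*_{H_{\overline{k}}}(G_2)_{\overline{k}}$ fails in general. The universal property only produces a canonical morphism $G_{1,K}*_{H_K}G_{2,K}\to (G_1*_HG_2)_K$; it says nothing about this being an isomorphism, and indeed the paper devotes the end of Section~\ref{subsec: Properties of amalgamated free products} to a counterexample: for algebraically closed fields $k\subseteq K$ of characteristic zero with $|k|<|K|$, the groups $\mu_{2,K}*\mu_{2,K}$ and $(\mu_{2,k}*\mu_{2,k})_K$ have coordinate rings of different $K$-dimensions. (The underlying reason is that the fibre product of Hopf algebras lives inside an infinite product $\hat{T}(k[G_1]\oplus k[G_2])=\prod_n(k[G_1]\oplus k[G_2])^{\otimes n}$, and $-\otimes_k K$ does not commute with infinite products.) So the step you describe as ``immediate from the universal property'' is precisely the step that is false.

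The gap is repairable without changing your strategy, and the repair is what the paper actually does: do not base change the group, base change the ring. By Lemma~\ref{lemma: quotient map} it suffices to show $k[G_1*G_2]$ is geometrically reduced, i.e.\ that $k[G_1*G_2]\otimes_k\overline{k}$ is reduced. This algebra is a subring of $\hat{T}(k[G_1]\oplus k[G_2])\otimes_k\overline{k}$, which in turn injects into $\hat{T}(\overline{k}[G_1]\oplus\overline{k}[G_2])$ (the canonical map $\hat{T}V\otimes_k\overline{k}\to\hat{T}(V\otimes_k\overline{k})$ is always injective, though rarely surjective), and the latter is a product of the reduced rings $(\overline{k}[G_1]\times\overline{k}[G_2])^{\otimes n}$. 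From that point your argument goes through verbatim. If you insist on phrasing the reduction via the morphism $G_{1,\overline{k}}*G_{2,\overline{k}}\to (G_1*G_2)_{\overline{k}}$, you must prove that its dual is injective, which again comes down to the injectivity of $\hat{T}V\otimes_k\overline{k}\to\hat{T}(V\otimes_k\overline{k})$ rather than to any formal property of pushouts.
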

\begin{proof}
	According to Lemma \ref{lemma: quotient map} it suffices to show that $k[G_1*G_2]$ is geometrically reduced. Let $\overline{k}$ denote the algebraic closure of $k$. We have to show that $k[G_1*G_2]\otimes_k\overline{k}$ is reduced. Since $k[G_1*G_2]\otimes_k\overline{k}$ is a subring of $\hat{T}(k[G_1]\oplus k[G_2])\otimes_k\overline{k}$, it suffices to shows that $\hat{T}(k[G_1]\oplus k[G_2])\otimes_k\overline{k}$ is reduced. For $i=1,2$ we abbreviate $\overline{k}[G_i]=k[G_i]\otimes_k \overline{k}$. As $\hat{T}(k[G_1]\oplus k[G_2])\otimes_k\overline{k}$ injects into $\hat{T}(\overline{k}[G_1]\oplus\overline{k}[G_2])$, it suffices to see that $\hat{T}(\overline{k}[G_1]\oplus\overline{k}[G_2])$. But as a ring, $\hat{T}(\overline{k}[G_1]\oplus\overline{k}[G_2])$ is the product of the rings $(\overline{k}[G_1]\oplus\overline{k}[G_2])^{\otimes n}$. As $\overline{k}[G_1]\oplus\overline{k}[G_2]=\overline{k}[G_1]\times\overline{k}[G_2]$ is reduced, also $(\overline{k}[G_1]\oplus\overline{k}[G_2])^{\otimes n}$ is reduced (\cite[Chapter V, \S 15, No. 5, Theorem 3]{Bourbaki:Algebra2}) and therefore $\hat{T}(\overline{k}[G_1]\oplus\overline{k}[G_2])$ is reduced.
\end{proof}

\begin{lemma}
	The morphisms $G_1\to G_1*G_2$ and $G_2\to G_1*G_2$ are closed embeddings.
\end{lemma}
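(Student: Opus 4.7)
The plan is to exhibit a retraction of each inclusion, which will force the corresponding Hopf algebra map to be surjective, and hence the morphism to be a closed embedding. Recall that a morphism $\f\colon G\to G'$ of $k$-groups is a closed embedding if and only if the dual Hopf algebra morphism $\f^*\colon k[G']\to k[G]$ is surjective.

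First I would construct a morphism $\pi_1\colon G_1*G_2\to G_1$ splitting $i_1$, using the universal property of the free product (the case $H=1$ of the amalgamated product, whose existence is Corollary~\ref{t1-12}). Take $j_1=\id_{G_1}\colon G_1\to G_1$, and let $j_2\colon G_2\to G_1$ be the trivial morphism, i.e.\ the composition $G_2\to 1\hookrightarrow G_1$ (the unique morphism factoring through the trivial subgroup scheme $1\subseteq G_1$; this morphism always exists since the augmentation $k[G_1]\to k$ is a morphism of Hopf algebras). The universal property of $G_1*G_2$ yields a unique morphism $\pi_1\colon G_1*G_2\to G_1$ of $k$-groups such that $\pi_1 i_1=\id_{G_1}$ and $\pi_1 i_2=j_2$.

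Dualizing the identity $\pi_1 i_1=\id_{G_1}$ gives $p_1\circ \pi_1^*=\id_{k[G_1]}$, where $p_1=i_1^*\colon k[G_1*G_2]\to k[G_1]$ is the Hopf algebra morphism dual to $i_1$. In particular, $p_1$ is surjective, so $i_1\colon G_1\to G_1*G_2$ is a closed embedding. By symmetry (swapping the roles of $G_1$ and $G_2$, with $j_1$ the trivial morphism $G_1\to 1\hookrightarrow G_2$ and $j_2=\id_{G_2}$), the same argument shows that $i_2\colon G_2\to G_1*G_2$ is a closed embedding.

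I do not expect any real obstacle here: the argument is a formal consequence of the universal property of the free product and the existence of the trivial morphism between any two $k$-groups. The only point worth flagging is that one uses the free product (i.e.\ $H=1$) rather than a general amalgamated product, so that the trivial morphisms $G_i\to G_j$ are automatically compatible with the (vacuous) amalgamation data; for a nontrivial $H$ one would need the given morphisms $H\to G_1$ and $H\to G_2$ to fit into a compatible cocone, which is not the case in general, so the analogous statement for $G_i\to G_1*_H G_2$ need not hold.
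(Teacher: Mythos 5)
Your proof is correct and takes essentially the same route as the paper: both arguments produce a retraction $G_1*G_2\to G_1$ from the universal property by pairing $\id_{G_1}$ with the trivial morphism $G_2\to G_1$. The only (harmless) difference is the last step — you dualize the splitting to get surjectivity of $i_1^*\colon k[G_1*G_2]\to k[G_1]$ directly, while the paper concludes injectivity of $G_1(R)\to (G_1*G_2)(R)$ for all $R$ and implicitly invokes the fact that a monomorphism of affine group schemes over a field is a closed immersion; your version is, if anything, slightly more self-contained.
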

\begin{proof}
	It suffices to show that $G_1(R)\to (G_1*G_2)(R)$ is injective for every $k$-algebra $R$.
	Set $\f_1=\id\colon G_1\to G_1$ and let $\f_2\colon G_2\to G_1$ be the trivial morphism, i.e., the morphism that factors through the trivial group. By the universal property of $G_1*G_2$, there exists a morphism $\f\colon G_1*G_2\to G_1$ such that the composition  $G_1\to G_1*G_2\xrightarrow{\f} G_1$ is the identity. In particular, $G_1(R)\to (G_1*G_2)(R)$ is injective for every $k$-algebra $R$.	
%
\end{proof}

It may seem natural to expect that $G_i\to G_1*_HG_2$ ($i=1,2$) are closed embeddings if $H\to G_i$ are closed embeddings, since the analogous statement holds for abstract groups. However, the analogous statement does not even hold for profinite groups (\cite[Example 9.2.10]{RibesZalesskii:ProfiniteGroups}) and therefore one should not expect it for proalgebraic groups.

In a similar vein, still assuming that $H\to G_i$ are closed embeddings, one may hope that the natural map
$G_1(R)*_{H(R)}G_2(R)\to (G_1*_HG_2)(R)$ is injective for every $k$-algebra $R$, where $G_1(R)*_{H(R)}G_2(R)$ is the amalgamated free product of abstract groups. However, if $G_1(R)\to (G_1*_HG_2)(R)$ is not injective, then also  
$G_1(R)*_{H(R)}G_2(R)\to (G_1*_HG_2)(R)$ cannot be injective, because $G_1(R)\to G_1(R)*_{H(R)}G_2(R)$ is injective.


\bigskip

For a $k$-group $G$, let $G^{\et}$ be the maximal pro-\'etale quotient. It is canonically isomorphic to the $k$\=/group $\pi_0 (G)$ of connected components of $G$. Every morphism of $k$-groups $H \to G^{\et}$ factors uniquely $H \to H^{\et} \to G^{\et}$. Hence, for $k$-groups $H , G_1 , G_2$ and $G$, the following two commutative diagrams are equivalent
\[
\vcenter{\xymatrix{
H \ar[r] \ar[d] & G^{\et}_1 \ar[d] \\
G^{\et}_2 \ar[r] & G
}}
\quad \text{and} \quad 
\vcenter{\xymatrix{
H^{\et} \ar[r] \ar[d] & G^{\et}_1 \ar[d] \\
G^{\et}_2 \ar[r] & G \; .
}}
\]
Hence we have a canonical identification
\begin{equation}
\label{eq:cd8}
G^{\et}_1 \ast_H G^{\et}_2 = G^{\et}_1 \ast_{H^{\et}} G^{\et}_2 \; .
\end{equation}

\begin{prop}
\label{t2.8}
Given morphisms $\alpha_1 : H \to G_1$ and $\alpha_2 : H \to G_2$ of $k$-groups, the canonical morphism
\[
G_1 \ast_H G_2 \longrightarrow G^{\et}_1 \ast_H G^{\et}_2 = G^{\et}_1 \ast_{H^{\et}} G^{\et}_2
\]
induces an isomorphism
\begin{equation}
\label{eq:cd9}
(G_1 \ast_H G_2)^{\et} \silo (G^{\et}_1 \ast_{H^{\et}} G^{\et}_2)^{\et} \; .
\end{equation}
Equivalently we have an isomorphism
\begin{equation}
\label{eq:cd10}
\pi_0 (G_1 \ast_H G_2) \silo \pi_0 (\pi_0 (G_1) \ast_{\pi_0 (H)} \pi_0 (G_2)) \; .
\end{equation}
\end{prop}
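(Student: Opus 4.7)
The plan is to exploit the fact that the functor $G \mapsto G^{\et}$ is left adjoint to the inclusion of pro-\'etale $k$-groups into all $k$-groups. Indeed, the discussion preceding the proposition shows precisely that any morphism from a $k$-group to a pro-\'etale $k$-group factors uniquely through the maximal pro-\'etale quotient of the source, which is the defining property of this left adjoint. Since left adjoints preserve all colimits, and amalgamated free products are pushouts, both sides of (\ref{eq:cd9}) should represent the same functor on pro-\'etale $k$-groups.

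Concretely, I would verify the universal property directly via the Yoneda lemma. For a pro-\'etale $k$-group $K$, the adjunction gives
\[
\Hom\bigl((G_1 \ast_H G_2)^{\et}, K\bigr) = \Hom(G_1 \ast_H G_2, K),
\]
and by the universal property of the amalgamated free product established in Corollary \ref{t1-12}, this set is in bijection with the set of pairs $(f_1, f_2)$ of morphisms $f_i : G_i \to K$ satisfying $f_1 \alpha_1 = f_2 \alpha_2$. Because $K$ is pro-\'etale, each $f_i$ factors uniquely as $f_i = \widetilde{f}_i \circ (G_i \to G_i^{\et})$ for a unique $\widetilde{f}_i : G_i^{\et} \to K$, and the compatibility condition on $H$ translates (using (\ref{eq:cd8}), i.e., the fact that $H \to K$ factors through $H^{\et}$) into the condition that $\widetilde{f}_1$ and $\widetilde{f}_2$ agree after restriction along $H^{\et} \to G_i^{\et}$. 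Hence this data is in canonical bijection with $\Hom(G_1^{\et} \ast_{H^{\et}} G_2^{\et}, K) = \Hom\bigl((G_1^{\et} \ast_{H^{\et}} G_2^{\et})^{\et}, K\bigr)$, and the Yoneda lemma produces the isomorphism (\ref{eq:cd9}). It is straightforward to check that this isomorphism is induced by the canonical morphism in the statement.

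The equivalent formulation (\ref{eq:cd10}) follows immediately from the canonical identification $G^{\et} \simeq \pi_0(G)$ applied to each of the three groups appearing. The argument is essentially a formal consequence of the adjunction and leaves no serious obstacle, provided that the existence and universal property of the amalgamated free product from Corollary \ref{t1-12} are in hand.
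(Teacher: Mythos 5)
Your argument is correct and is essentially the paper's own proof: the paper also checks that both sides of \eqref{eq:cd9} satisfy the universal property of a ``pro-\'etale push-out'' (i.e.\ represent the same functor on pro-\'etale $k$-groups), merely leaving implicit the Yoneda/adjunction details you spell out. No further comment is needed.
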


\begin{proof}
Consider commutative diagrams of $k$-groups where $E$ is pro-\'etale
\[
\xymatrix{
H \ar[r]^{\alpha_1} \ar[d]_{\alpha_2} & G_1 \ar[d] \\
G_2 \ar[r] & E \, .
}
\]
We say that the diagram is a pro-\'etale push-out, if for every other such diagram with pro-\'etale $E'$ there is a unique morphism $E \to E'$ for which the obvious diagrams commute. A pro-\'etale push-out is unique up to a unique isomorphism. It is straightforward to check that both sides of \eqref{eq:cd9} give rise to pro-\'etale push-outs $E$. Hence \eqref{eq:cd9} is an isomorphism. 
\end{proof}

\begin{rem}
 It is not true that the amalgamated product of pro-\'etale $k$-groups is again pro-\'etale. Consider the representations
\[
\ZZ / 4 \to \GL_2 (\QQ) \; , \; 1 \mapsto \left( \begin{smallmatrix} 0 & -1 \\ 1 & 0 \end{smallmatrix} \right) \quad \text{and} \quad \ZZ / 6 \to \GL_2 (\QQ) \; , \; 1 \mapsto \left( \begin{smallmatrix} 1 & -1 \\ 1 & 0 \end{smallmatrix} \right) \; .
\]
Their images are known to generate the infinite subgroup $\Sl_2 (\ZZ)$ of $\GL_2 (\QQ)$. We have corresponding representations of $\QQ$-groups $(\ZZ / 4)_{\QQ} \to \GL_{2, \QQ}$ and $(\ZZ / 6)_{\QQ} \to \GL_{2 , \QQ}$ and hence an algebraic representation of the free product
\[
(\ZZ / 4)_{\QQ} \ast (\ZZ / 6)_{\QQ} \xrightarrow{\rho} \GL_{2, \QQ} \; .
\]
Its image on $\QQ$-valued points contains $\Sl_2 (\ZZ)$ because the latter is the image of the composition
\[
\ZZ / 4 \ast \ZZ / 6 \longrightarrow ((\ZZ / 4)_{\QQ} \ast (\ZZ / 6)_{\QQ}) (\QQ) \xrightarrow{\rho (\QQ)} \GL_2 (\QQ) \; .
\]
The algebraic quotients of pro-\'etale group schemes are finite. Hence $(\ZZ / 4)_{\QQ} \ast (\ZZ / 6)_{\QQ}$ cannot be pro-\'etale. 
\end{rem}

\begin{cor}
\label{t2.9}
If $G_1 , G_2$ are connected $k$-groups, then $G_1 \ast_H G_2$ is connected as well.
\end{cor}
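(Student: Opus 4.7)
The plan is to deduce this directly from Proposition \ref{t2.8}. Recall that a $k$-group $G$ is connected precisely when $\pi_0(G)$ is the trivial $k$-group. By \eqref{eq:cd10}, we have a canonical isomorphism
$$
\pi_0(G_1 \ast_H G_2) \silo \pi_0\bigl(\pi_0(G_1) \ast_{\pi_0(H)} \pi_0(G_2)\bigr).
$$
Under our hypothesis that $G_1$ and $G_2$ are connected, $\pi_0(G_1)$ and $\pi_0(G_2)$ are both the trivial $k$-group, so it suffices to show that the amalgamated free product $1 \ast_{\pi_0(H)} 1$ is trivial.

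This is immediate from the universal property of the pushout: for any $k$-group $G$, a morphism $1 \ast_{\pi_0(H)} 1 \to G$ corresponds to a pair of morphisms $1 \to G$, $1 \to G$ of $k$-groups whose restrictions to $\pi_0(H)$ coincide. Both morphisms are necessarily the trivial morphism, so the compatibility condition is automatic, and hence $\Hom(1 \ast_{\pi_0(H)} 1, G)$ is a singleton for every $k$-group $G$. Therefore $1 \ast_{\pi_0(H)} 1 = 1$, and so $\pi_0(G_1 \ast_H G_2) = \pi_0(1) = 1$, which gives the connectedness of $G_1 \ast_H G_2$.

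There is essentially no obstacle here; the whole content has been packaged into Proposition \ref{t2.8}. The only thing one might want to mention explicitly is that the trivial $k$-group is a zero object in the category of $k$-groups, so that the pushout of $1 \leftarrow \pi_0(H) \to 1$ is trivial regardless of $\pi_0(H)$.
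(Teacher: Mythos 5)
Your proof is correct and follows the same route as the paper: both reduce via Proposition \ref{t2.8} to the triviality of $1 \ast_{\pi_0(H)} 1$, which the paper likewise dispatches by the universal property of the pushout. You have merely spelled out that universal-property argument in slightly more detail.
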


\begin{proof}
By Proposition \ref{t2.8}, we have
\[
\pi_0 (G_1 \ast_H G_2) = \pi_0 (\pi_0 (G_1) \ast_{\pi_0 (H)} \pi_0 (G_2)) \; .
\]
An affine group scheme $G$ (over a field $k$) is connected if and only if $\pi_0 (G) = \spec k$ is the trivial group. Since the amalgamated product of two trivial $k$-groups is the trivial $k$-group by the universal property, the assertion follows.
\end{proof}


We next show that free products are compatible with the formation of \emph{free proalgebraic groups}. To this end, let us recall the definition of the free proalgebraic group $\Gamma_X$ on a set $X$ (\cite[Section 2.5]{Wibmer:FreeProalgebraicGroups}). Let $\overline{k}$ denote the algebraic closure of $k$ and let $G$ be a $k$-group. A map $X\to G(\overline{k})$ \emph{converges to $1$} if for every quotient map $G\to H$ with $H$ algebraic (i.e., of finite type over $k$) the composition $X\to G(\overline{k})\to H(\overline{k})$ maps all but finitely many elements of $X$ to the identity element of $H(\overline{k})$. The free proalgebraic group $\Gamma_X$ on $X$ comes equipped with a map $X\to\Gamma_X(\overline{k})$ that converges to $1$ and satisfies the following universal property: If $X\to G(\overline{k})$ is a map converging to $1$, then there exists a unique morphism $\Gamma_X\to G$ of $k$-groups such that
$$
\xymatrix{
X \ar[rr] \ar[rd] & & \Gamma_X(\overline{k}) \ar[ld] \\
& G(\overline{k}) &	
}
$$
commutes.

\begin{prop}
	Let $X$ and $Y$ be sets, then $\Gamma_X*\Gamma_Y\simeq \Gamma_{X\uplus Y}$.
\end{prop}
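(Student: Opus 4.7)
The plan is to show that $\Gamma_X * \Gamma_Y$ satisfies the universal property characterizing $\Gamma_{X \uplus Y}$. First I would construct a natural map $\iota\colon X \uplus Y \to (\Gamma_X*\Gamma_Y)(\ok)$ by combining the universal maps $X\to \Gamma_X(\ok)$ and $Y \to \Gamma_Y(\ok)$ with the canonical morphisms $\Gamma_X\to \Gamma_X*\Gamma_Y$ and $\Gamma_Y \to \Gamma_X*\Gamma_Y$ evaluated on $\ok$-points.

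The key technical point is verifying that $\iota$ converges to $1$. Let $\Gamma_X*\Gamma_Y \to G$ be a quotient map with $G$ algebraic. The composition $\Gamma_X \to \Gamma_X*\Gamma_Y \to G$ need not be a quotient map, but its scheme-theoretic image is a closed (hence algebraic) subgroup $G_1\subseteq G$, and $\Gamma_X \to G_1$ is a quotient map. By the convergence of $X\to \Gamma_X(\ok)$, all but finitely many elements of $X$ map to $1$ in $G_1(\ok)$, hence to $1$ in $G(\ok)$. The same argument applies to $Y$, so all but finitely many elements of $X\uplus Y$ map to $1$ in $G(\ok)$. Thus $\iota$ converges to $1$.

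Now I would verify the universal property. Suppose $\varphi\colon X\uplus Y\to G(\ok)$ is any map converging to $1$. The restrictions $\varphi|_X$ and $\varphi|_Y$ clearly converge to $1$ as well, so by the universal property of free proalgebraic groups we obtain morphisms $\Gamma_X\to G$ and $\Gamma_Y\to G$ of $k$-groups. The universal property of the free product (Corollary~\ref{t1-12} with $H$ trivial) then yields a unique morphism $\Phi\colon \Gamma_X*\Gamma_Y\to G$ compatible with both. Tracing through the definitions, the induced map $(\Gamma_X*\Gamma_Y)(\ok)\to G(\ok)$ composed with $\iota$ recovers $\varphi$.

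Uniqueness of $\Phi$ follows from the uniqueness assertions in the two universal properties: any morphism $\Gamma_X*\Gamma_Y\to G$ making the relevant diagram commute restricts to the unique morphisms $\Gamma_X\to G$ and $\Gamma_Y\to G$ induced by $\varphi|_X$ and $\varphi|_Y$, and is therefore determined. The main (mild) obstacle is the convergence step; once the factorization through the image is used, the rest is formal.
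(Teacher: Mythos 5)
Your proof is correct, but it runs the universal\-/property argument in the opposite direction from the paper: you show that $\Gamma_X*\Gamma_Y$ satisfies the defining universal property of the free proalgebraic group on $X\uplus Y$, whereas the paper shows that $\Gamma_{X\uplus Y}$ satisfies the pushout universal property of the free product. The trade-off is where the one non-formal verification lands. In your direction you must check that the combined map $X\uplus Y\to(\Gamma_X*\Gamma_Y)(\overline{k})$ converges to $1$, which you handle correctly by factoring an algebraic quotient of $\Gamma_X*\Gamma_Y$ through the scheme-theoretic image of $\Gamma_X$ (resp.\ $\Gamma_Y$), a closed and hence algebraic subgroup onto which $\Gamma_X$ maps faithfully flatly. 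In the paper's direction the analogous point is that, given morphisms $\Gamma_X\to G$ and $\Gamma_Y\to G$, the induced map $X\uplus Y\to G(\overline{k})$ converges to $1$; this rests on exactly the same image-factorization fact (convergence to $1$ is preserved under composition with an arbitrary morphism of $k$-groups), which the paper asserts without spelling out. So the two proofs are mirror images hinging on the same lemma; yours has the advantage of making that lemma explicit, while the paper's choice of direction keeps the convergence checks slightly more immediate. The remaining diagram chases and uniqueness arguments in your write-up are all sound.
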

\begin{proof}
We will show that $\Gamma_{X\uplus Y}$ satisfies the universal property of $\Gamma_X*\Gamma_Y$. As the compositions $X\to X\uplus Y\to \Gamma_{X\uplus Y}(\overline{k})$ and  $Y\to X\uplus Y\to \Gamma_{X\uplus Y}(\overline{k})$ converge to $1$, we have induced morphisms $\Gamma_X\to \Gamma_{X\uplus Y}$ and $\Gamma_Y\to \Gamma_{X\uplus Y}$. Given morphisms $\Gamma_X\to G$ and $\Gamma_Y\to G$ of $k$-groups, we can define a map $X\uplus Y\to G(\overline{k})$ by sending an element of $X$ to its image under $X\to \Gamma_X(\overline{k})\to G(\overline{k})$ and an element of $Y$ to its image under $Y\to \Gamma_X(\overline{k})\to G(\overline{k})$. Then $X\uplus Y\to G(\overline{k})$ converges to $1$ and therefore induces a morphism $\Gamma_{X\uplus Y}\to G$. The diagrams
\begin{equation} \label{eq: 2 diagrams}
\xymatrix{ 
\Gamma_{X\uplus Y} \ar[rr] & & G & & \Gamma_{X\uplus Y} \ar[rr] & & G \\
& \Gamma_X \ar[ul] \ar[ur]	 & & & & \Gamma_Y \ar[ul] \ar[ur]	 & 
}
\end{equation} commute because their restrictions to $X$ respectively $Y$ commute. On the other hand, any two morphisms $\Gamma_{X\uplus Y}\to G$ making (\ref{eq: 2 diagrams}) commute, will have the same restriction to $X\uplus Y$ and therefore are equal.
\end{proof}

We next show that amalgamated free products are compatible with \emph{proalgebraic completion}. For a discrete group $\Gamma$, let $\Gamma^{\alg}$ be the proalgebraic completion (or proalgebraic hull) of $\Gamma$ over $k$. It is a $k$-group equipped with a morphism $\Gamma \to \Gamma^{\alg} (k)$ with the universal property that any linear representation $\Gamma \to \GL (V)$ on a finite dimensional $k$-vector space $V$ comes from an algebraic representation of $\Gamma^{\alg} \to \GL_V$ by composition
\[
\Gamma \longrightarrow \Gamma^{\alg} (k) \longrightarrow \GL_V (k) = \GL (V) \; .
\]
The $k$-group $\Gamma^{\alg}$ may be obtained as the Tannakian dual of the category $\Rep (\Gamma)$ of representations of $\Gamma$ on finite dimensional $k$-vector spaces, with its canonical fibre functor. For more background on proalgebraic completions see e.g., \cite[Section 2.1]{BassLubotzkyMagidMozes:TheProalgebraicCompletionOfRigidGroups}.

\begin{prop}
\label{t2.10}
Given morphisms of discrete groups $\Gamma \to \Gamma_1$ and $\Gamma \to \Gamma_2$, the following canonical morphism is an isomorphism:
\[
(\Gamma_1 \ast_{\Gamma} \Gamma_2)^{\alg} \silo \Gamma^{\alg}_1 \ast_{\Gamma^{\alg}} \Gamma^{\alg}_2 \; .
\]
\end{prop}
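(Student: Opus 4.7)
The plan is to invoke Tannakian duality. By definition, $\Gamma^{\alg}$ is the Tannakian dual of the neutralized Tannakian category $(\Rep(\Gamma), \omega_\Gamma)$, where $\omega_\Gamma$ is the forgetful fibre functor; analogously for $\Gamma_1^{\alg}$, $\Gamma_2^{\alg}$, and $(\Gamma_1 \ast_\Gamma \Gamma_2)^{\alg}$. Example \ref{t1-2}'s predecessor (the first example of Section \ref{sec:1}), applied to the abstract groups $\Gamma \to \Gamma_1$ and $\Gamma \to \Gamma_2$, yields an equivalence of neutralized Tannakian categories
\[
\Rep(\Gamma_1 \ast_\Gamma \Gamma_2) \xrightarrow{\sim} \Rep(\Gamma_1) \times_{\Rep(\Gamma)} \Rep(\Gamma_2),
\]
where the fibre product is formed with respect to the restriction functors, and the fibre functors agree up to the canonical identification since they are all induced by the forgetful functor on underlying $k$-vector spaces.

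By Corollary \ref{t1-11}, the Tannakian dual of the right-hand side is $\Gamma_1^{\alg} \ast_{\Gamma^{\alg}} \Gamma_2^{\alg}$, while the Tannakian dual of the left-hand side is $(\Gamma_1 \ast_\Gamma \Gamma_2)^{\alg}$ by definition of proalgebraic completion. This produces an isomorphism of $k$-groups
\[
(\Gamma_1 \ast_\Gamma \Gamma_2)^{\alg} \xrightarrow{\sim} \Gamma_1^{\alg} \ast_{\Gamma^{\alg}} \Gamma_2^{\alg}.
\]

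It remains to identify this isomorphism with the canonical morphism in the statement. By Lemma \ref{lemma: morphisms of groups and isom classes of functors}, two morphisms of $k$-groups coincide iff they correspond to the same isomorphism class of tensor functors between the associated neutralized Tannakian categories. Both the canonical morphism and the morphism produced above correspond to the composition
\[
\Rep(\Gamma_1^{\alg} \ast_{\Gamma^{\alg}} \Gamma_2^{\alg}) \simeq \Rep(\Gamma_1^{\alg}) \times_{\Rep(\Gamma^{\alg})} \Rep(\Gamma_2^{\alg}) \simeq \Rep(\Gamma_1) \times_{\Rep(\Gamma)} \Rep(\Gamma_2) \simeq \Rep(\Gamma_1 \ast_\Gamma \Gamma_2),
\]
where the outer equivalences come from the Example and Corollary \ref{t1-11}, and the middle equivalence is induced by the canonical equivalences $\Rep(H^{\alg}) \simeq \Rep(H)$ for $H \in \{\Gamma, \Gamma_1, \Gamma_2\}$ built into the definition of proalgebraic completion. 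The main (but routine) obstacle is to unwind that the canonical morphism, defined via the universal property of proalgebraic completion applied to $\Gamma_1 \ast_\Gamma \Gamma_2 \to (\Gamma_1^{\alg} \ast_{\Gamma^{\alg}} \Gamma_2^{\alg})(k)$, does indeed realize the same tensor functor; this is immediate from tracing the definitions, since restriction along $\Gamma_i \to \Gamma_i^{\alg}(k)$ is exactly how the equivalence $\Rep(\Gamma_i^{\alg}) \simeq \Rep(\Gamma_i)$ is set up.
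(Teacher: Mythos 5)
Your proof is correct and follows essentially the same route as the paper: both pass through the chain of equivalences $\Rep(\Gamma_1^{\alg}\ast_{\Gamma^{\alg}}\Gamma_2^{\alg})\simeq\Rep(\Gamma_1^{\alg})\times_{\Rep(\Gamma^{\alg})}\Rep(\Gamma_2^{\alg})\simeq\Rep(\Gamma_1)\times_{\Rep(\Gamma)}\Rep(\Gamma_2)\simeq\Rep(\Gamma_1\ast_\Gamma\Gamma_2)$ using the first example of Section~\ref{sec:1} and Corollary~\ref{t1-11}, and conclude by Tannakian duality. Your additional check that the resulting isomorphism agrees with the canonical morphism is a point the paper leaves implicit, and is handled correctly.
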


\begin{proof}
We have equivalences (Section \ref{sec:1})
\begin{align*}
\Rep (\Gamma^{\alg}_1 \ast_{\Gamma^{\alg}} \Gamma^{\alg}_2) & = \Rep (\Gamma^{\alg}_1) \times_{\Rep (\Gamma^{\alg})} \Rep (\Gamma^{\alg}_2) \\
& = \Rep (\Gamma_1) \times_{\Rep (\Gamma)} \Rep (\Gamma_2) \\
& = \Rep (\Gamma_1 \ast_{\Gamma} \Gamma_2) = \Rep ((\Gamma_1 \ast_{\Gamma} \Gamma_2)^{\alg}) \; .
\end{align*}
The assertion now follows from Tannakian duality. 
\end{proof}


Contrary to what one might expect, the amalgamated free product may not be compatible with base change. Indeed, if $H\to G_1$ and $H\to G_2$ are morphisms of $k$-groups and $K$ is a field extension of $k$, the commutative diagram
$$
\xymatrix{
	& (G_1*_H G_2)_K & \\
	G_{1,K} \ar[ru] & & G_{2,K} \ar[lu] \\
	& H_K \ar[lu], \ar[ru] &	
}
$$
yields a morphism $G_{1,K}*_{H_K}G_{2,K}\to (G_1*_H G_2)_K$. The dual morphism is the restriction of $\widehat{T}(k[G_1]\oplus k[G_2])\otimes_k K\to  \widehat{T}((k[G_1]\oplus k[G_2])\otimes_k K)$. The latter map is always injective but rarely surjective (\cite[Chapter II, \S 3.7, Corollary 3]{Bourbaki:Algebra1}). 
So $(G_1*_H G_2)_K$ is a quotient of $G_{1,K}*_{H_K}G_{2,K}$. To construct a concrete example, where the morphism $G_{1,K}*_{H_K}G_{2,K}\to (G_1*_H G_2)_K$ fails to be an isomorphism, we need some preparation. 

Assume, for now, that $k$ is an algebraically closed field of characteristic zero.
Let $C_2$ denote the (abstract) group with two elements. We first show that the set of isomorphism classes of irreducible finite dimensional $k$-linear representations of $C_2*C_2$ has cardinality $|k|$. Note that to specify a representation of $C_2$ on a $k$-vector space $V$ is equivalent to specifying a direct sum decomposition $V=V_1\oplus V_{-1}$, where the non-trivial element of $C_2$ acts trivially on $V_1$ and by multiplications with $-1$ on $V_{-1}$. To specify a representation of $C_1*C_2$ is thus equivalent to specifying two direct sum decompositions $V=V_1\oplus V_{-1}$ and $V=V'_1\oplus V'_{-1}$. 
The corresponding representation of $C_2*C_2$ is irreducible if and only if $V_i\cap V'_j=0$ for all $i,j\in \{1,-1\}$. For $a\in k\smallsetminus\{0,1\}$ let $V_a=k^2$ be the representation of $C_2*C_2$ determined by $V_1=k\begin{pmatrix}1 \\ 0 \end{pmatrix}$, $V_{-1}=k\begin{pmatrix}0 \\ 1 \end{pmatrix}$, $V_1'=k\begin{pmatrix}1 \\ 1 \end{pmatrix}$ and $V_{-1,a}'=k\begin{pmatrix}1 \\ a \end{pmatrix}$. The representations $V_a$ are irreducible and pairwise non-isomorphic. Indeed, if $a,b\in k\smallsetminus\{0,1\}$, and $f\colon V_a\to V_{b}$ is an isomorphism, then $f$ must stabilize $V_1$ and $V_{-1}$, i.e., $f$ is given by a diagonal matrix. As $f$ also stabilizes $V_1'$, $f$ is in fact given by a scalar matrix. As $f$ maps $V'_{-1,a}$ into $V'_{-1,b}$, we find that $a=b$.

Since the $V_a$'s are irreducible and pairwise non-isomorphic as representations of $C_1*C_2$, the $V_a$'s are also irreducible and pairwise non-isomorphic as representations of $(C_2*C_2)^{\alg}=(C_2*C_2)^{\alg,k}$. Every (finite dimensional) representation of a $k$-group $G$ embeds into $k[G]^n$ for some $n\geq 1$ (\cite{Waterhouse:IntroductiontoAffineGroupSchemes}). It follows that every irreducible representation of $G$ embeds into $k[G]$. In particular, every $V_a$ embeds into $k[(C_2*C_2)^{\alg}]$. As a sum of pairwise non-isomorphic irreducible representations is always a direct sum, it follows that the dimension $\dim_k k[(C_2*C_2)^{\alg}]$ of $k[(C_2*C_2)^{\alg}]$ as a $k$-vector space is at least $|k|$. 

On the other hand, the set $S$ of all morphisms $\rho\colon C_2*C_2\to\GL_{n(\rho)}(k)$ has at most cardinality $|k|$ and $(C_2*C_2)^{\alg}$ can be realized as a closed subgroup scheme of $\prod_{\rho \in S}\GL_{n(\rho),k}$ (see \cite[Section 2.1]{BassLubotzkyMagidMozes:TheProalgebraicCompletionOfRigidGroups}). So the dimension of $k[(C_2*C_2)^{\alg}]$ is at most $|k|$. Therefore $\dim_k k[(C_2*C_2)^{\alg}]=|k|$.

Let $\mu_2=\mu_{2,k}$ denote the $k$-group of second roots of unity, i.e., $\mu_2(R)=\{g\in R^\times|\ g^2=1 \}$ for every $k$-algebra $R$. Using Proposition \ref{t2.10} we find
$$\mu_{2}*\mu_{2}=C_2^{\alg}*C_2^{\alg}=(C_2*C_2)^{\alg}.$$ So $\dim_kk[\mu_2*\mu_2]=|k|$.
Therefore, if $k\subseteq K$ is an inclusion of algebraically closed fields of characteristic zero with $|k|<|K|$, then $\mu_{2,K}*\mu_{2,K}$ and $(\mu_{2,k}*\mu_{2,k})_K$ cannot be isomorphic because the coordinate ring of $\mu_{2,K}*\mu_{2,K}$ has $K$-dimension $|K|$, whereas the coordinate ring of $(\mu_{2,k}*\mu_{2,k})_K$ has $K$-dimension $|k|$.


%

\section{Local systems and a Seifer-van Kampen Theorem} \label{sec:3}

The main goal of this short section is to establish an analog of the Seifert-van Kampen theorem for the proalgebraic fundamental group.

Let $X$ be a topological space. We denote with $\loc(X)=\loc_k(X)$ the category of local systems of finite dimensional $k$-vector spaces on $X$. An object $\Eh$ of $\loc(X)$ is thus a sheaf of $k$-vector spaces on $X$, such that every point $x\in X$ has an open neighborhood $U$ with $\Eh |_U\simeq \underline{k^r}$, where $\underline{k^r}$ is the constant sheaf on $U$ associated to the vector space $k^r$ (and $r \ge 0$ may depend on $U$).

For an open subset $U$ of $X$ and $\Eh\in \loc(X)$, the restriction $\Eh|_U$ is in $\loc(U)$. Indeed $\Eh\rightsquigarrow \Eh |_U$ is a functor $\loc(X)\to \loc(U)$. 

\begin{lemma} \label{t3-1}
	Let $X$ be a topological space and let $U_1,U_2$ be open subsets of $X$ such that $X=U_1\cup U_2$.  Then the functor $\Eh\rightsquigarrow (\Eh |_{U_1},\Eh |_{U_2} , \id_{\Eh |_{U_1 \cap U_2}})$ from $\loc(X)$ to  $\loc(U_1)\times_{\loc(U_1\cap U_2)} \loc(U_2)$ is an equivalence of categories.
\end{lemma}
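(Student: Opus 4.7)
The plan is to deduce the statement directly from the analogue of Example \ref{t1-2} for sheaves of $k$-vector spaces (which I will denote $\operatorname{Sh}_k$); the argument given there for sheaves of abelian groups applies verbatim, so the functor
\[
\operatorname{Sh}_k(X)\longrightarrow \operatorname{Sh}_k(U_1)\times_{\operatorname{Sh}_k(U_1\cap U_2)}\operatorname{Sh}_k(U_2),\quad \Fh\mapsto(\Fh|_{U_1},\Fh|_{U_2},\id_{\Fh|_{U_1\cap U_2}})
\]
is an equivalence of categories. Since the restriction of a locally constant sheaf to an open subset is again locally constant, this functor sends $\loc(X)$ into the subcategory $\loc(U_1)\times_{\loc(U_1\cap U_2)}\loc(U_2)$, which is a full subcategory of the sheaf fibre product (fullness uses that the inclusion $\loc\hookrightarrow\operatorname{Sh}_k$ is fully faithful).

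From this setup, full faithfulness of the functor in the lemma is immediate: it is the restriction of a fully faithful functor to full subcategories on both sides. So the only remaining point is essential surjectivity.

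For essential surjectivity, I would take an object $(\Eh_1,\Eh_2,c)$ of $\loc(U_1)\times_{\loc(U_1\cap U_2)}\loc(U_2)$ and apply the sheaf equivalence above to obtain a sheaf $\Eh\in\operatorname{Sh}_k(X)$ together with isomorphisms $\Eh|_{U_i}\simeq \Eh_i$ for $i=1,2$ compatible with $c$. Then I would verify that $\Eh$ is locally constant as follows: given $x\in X$, pick $i\in\{1,2\}$ with $x\in U_i$ and choose an open neighborhood $V\subseteq U_i$ of $x$ on which $\Eh_i|_V$ is constant, i.e.\ isomorphic to some $\underline{k^r}$. Then $\Eh|_V\simeq \Eh_i|_V\simeq \underline{k^r}$, so $\Eh$ is locally constant at $x$. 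Hence $\Eh\in\loc(X)$ and its image in the fibre product is isomorphic to $(\Eh_1,\Eh_2,c)$ via the gluing isomorphisms produced by the sheaf equivalence.

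There is no serious obstacle here: the only thing one has to be slightly careful about is that the morphism $c$ --- which is only an isomorphism of \emph{sheaves} a priori --- is automatically an isomorphism in $\loc(U_1\cap U_2)$ because its source and target already lie in $\loc(U_1\cap U_2)$, and likewise that the identifications $\Eh|_{U_i}\simeq \Eh_i$ coming out of the sheaf equivalence are morphisms of local systems because both sides are local systems. Thus the verification reduces entirely to the (standard) sheaf-theoretic gluing statement of Example \ref{t1-2} together with the stalk-local nature of the locally constant condition.
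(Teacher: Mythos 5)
Your proof is correct and follows the same route as the paper: the paper's own proof simply invokes the sheaf gluing equivalence of Example \ref{t1-2}, and your argument spells out the (routine) verifications that this equivalence restricts to the full subcategories of local systems, using that the locally constant condition is local.
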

\begin{proof}
	As in Example \ref{t1-2} this is just the sheaf property. See \cite[\href{https://stacks.math.columbia.edu/tag/00AK}{Tag 00AK}]{stacks-project}.
\end{proof}

The espace \'etale of any $\Eh$ in $\uLoc (X)$ may be viewed as a vector bundle over $X$ with locally constant transition functions in $\GL_r (k)$ for some $r$ which may vary. Equivalently, if we give $k$ the discrete topology, these are the vector bundles with continuous $\GL_r (k)$-valued transition functions. The stalk $\Eh_x$ of $\Eh$ in a point $x \in X$ is isomorphic to the fibre in $x$ of the corresponding vector bundle. We have $\Eh_x = k^r$ for some $r = r(x) \ge 0$. If $X$ is connected, these ranks are independent of $x$ and $\uLoc_k (X)$ is a neutral Tannakian category. It is neutralized by the fibre functor $\omega_x : \uLoc (X) \to \nVec_k$, which sends local systems and their morphisms to the respective stalks, c.f. \cite[Proposition 2.1]{D}. In \cite{D}, the \emph{proalgebraic fundamental group} $\pi_k (X,x)$ of a connected pointed topological space $(X,x)$ was defined to be the Tannakian dual of $(\uLoc_k (X), \omega_x)$. Thus we have
\[
\uLoc_k (X) = \Rep (\pi_k (X,x)) \; .
\]
The previous constructions immediately give the following result:


\begin{theo}
\label{t32}
Let $U_1 , U_2$ be open subsets of a topological space $X$ such that $X=U_1\cup U_2$ and assume that $X , U_1 , U_2$ and $U_1 \cap U_2 \neq \emptyset$ are all connected. Then, for any point $x \in U_1 \cap U_2$ the commutative diagram of $k$-groups
\begin{equation}
\label{eq:31}
\vcenter{\xymatrix{
\pi_k (U_1 \cap U_2 , x) \ar[r] \ar[d] & \pi_k (U_1 , x) \ar[d] \\
\pi_k (U_2 , x) \ar[r] & \pi_k (X,x)
}}
\end{equation}
is a push-out diagram in the category of $k$-groups, i.e. we have an isomorphism
\[
\pi_k (X,x) \silo \pi_k (U_1 , x) \times_{\pi_k (U_1 \cap U_2 , x)} \pi_k (U_2 , x) \; .
\]
\end{theo}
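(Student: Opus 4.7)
The plan is to reduce Theorem \ref{t32} directly to Corollary \ref{t1-11} via Lemma \ref{t3-1}, by promoting the sheaf-theoretic equivalence of categories in Lemma \ref{t3-1} to an equivalence of neutralized Tannakian categories.

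First, I would fix $x \in U_1 \cap U_2$ and consider the fibre functors $\omega_x^X,\omega_x^{U_1},\omega_x^{U_2},\omega_x^{U_1\cap U_2}$ on $\uLoc(X), \uLoc(U_1), \uLoc(U_2), \uLoc(U_1\cap U_2)$ given by the stalk at $x$. Each of these makes the corresponding category into a neutralized Tannakian category over $k$, as recalled just before the statement of Theorem \ref{t32}. Since restriction of sheaves is a $k$-linear exact tensor functor and commutes with taking the stalk at $x$ (for $x$ lying in the relevant open set), the restriction functors $\operatorname{Res}_i\colon \uLoc(U_i)\to \uLoc(U_1\cap U_2)$ are tensor functors between neutralized Tannakian categories, where the canonical isomorphism of fibre functors is the obvious identification $(\Eh|_{U_i})_x = \Eh_x$. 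Hence we may form the fibre product of neutralized Tannakian categories
\[
\uLoc(U_1)\times_{\uLoc(U_1\cap U_2)} \uLoc(U_2)
\]
as in Lemma \ref{lemma: tensor on fibre product neutralized Tannakian categories}.

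Next, I would upgrade the equivalence of categories from Lemma \ref{t3-1} to an equivalence of neutralized Tannakian categories
\[
\uLoc(X)\xrightarrow{\;\sim\;} \uLoc(U_1)\times_{\uLoc(U_1\cap U_2)} \uLoc(U_2),\quad \Eh\rightsquigarrow (\Eh|_{U_1},\Eh|_{U_2},\id_{\Eh|_{U_1\cap U_2}}).
\]
This is routine: the tensor structure is compatible because restriction of sheaves commutes with tensor products and identity objects, and the relevant fibre functor on the fibre product sends $(\Eh|_{U_1},\Eh|_{U_2},\id)$ to $(\Eh|_{U_1})_x = \Eh_x = \omega_x^X(\Eh)$, so the chosen isomorphism of fibre functors is simply the identity. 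The only nontrivial ingredient here has already been established, namely Lemma \ref{t3-1}, which ensures the underlying functor is an equivalence.

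Finally, Corollary \ref{t1-11} identifies the fundamental group of the right-hand side with the amalgamated free product $\pi_k(U_1,x)*_{\pi_k(U_1\cap U_2,x)}\pi_k(U_2,x)$. Combined with the equivalence above and the definition $\uLoc(X)=\Rep(\pi_k(X,x))$, this gives the desired isomorphism
\[
\pi_k(X,x)\xrightarrow{\;\sim\;} \pi_k(U_1,x)*_{\pi_k(U_1\cap U_2,x)}\pi_k(U_2,x).
\]
The push-out diagram \eqref{eq:31} then results from the functoriality of $\pi_k$ under inclusions and the universal property of the amalgamated free product. The main conceptual point is thus the translation between the gluing-of-sheaves equivalence of Lemma \ref{t3-1} and the Tannakian fibre-product formalism of Section \ref{sec:1}; once this translation is made, no further work is required. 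I do not anticipate a serious obstacle here, since connectedness of $X, U_1, U_2, U_1\cap U_2$ guarantees that all four categories are genuinely neutral Tannakian (with well-defined stalk fibre functor at $x$), and the tensor/fibre-functor compatibilities for restriction are formal.
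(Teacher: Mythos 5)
Your proposal is correct and follows essentially the same route as the paper: the paper likewise invokes Lemma \ref{t3-1} for the fibre product of the categories of local systems, observes that the restriction functors commute with the stalk fibre functors at $x$, and then applies Corollary \ref{t1-11} to identify $\pi_k(X,x)$ with the amalgamated free product. No gaps.
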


\begin{proof}
By Lemma \ref{t3-1} the following diagram is a fibre product of categories
\[
\xymatrix{
\uLoc_k (X) \ar[r] \ar[d] & \uLoc_k (U_1) \ar[d] \\
\uLoc_k (U_2) \ar[r] & \uLoc_k (U_1 \cap U_2) \; .
}
\]
Note that all (restriction) functors commute with the respective fibre functors $\omega_x$ on the respective categories. Hence the diagram of representation categories induced by \eqref{eq:31} is a fibre product as well
\[
\xymatrix{
\uRep (\pi_k (X,x)) \ar[r] \ar[d] & \uRep (\pi_k (U_1 , x)) \ar[d] \\
\uRep (\pi_k (U_2 , x)) \ar[r] & \uRep (\pi_k (U_1 \cap U_2 , x)) \; .
}
\]
By Corollary \ref{t1-11} in the situation of Corollary \ref{t1-12} it follows that \eqref{eq:31} is a push-out diagram.
\end{proof}

\section{The proalgebraic fundamental group of a topological space is perfect} \label{sec:4}

Throughout Section \ref{sec:4} we assume that $k$ is a perfect field of characteristic $p>0$. An affine group scheme $G =\spec A$ over $k$ is \emph{perfect} if the $p$-power map $(\;)^p$ is bijective on $A$. In this section we give Tannakian criteria with applications for when the Tannaka dual group of a neutralized Tannakian category $(\Th , \omega)$ over $k$ is reduced resp. perfect. The main tool is the functor $\Fr_+$ from \cite{Coulembier:TannakianCategoriesInPositiveCharacteristic} and our criteria are implicitely contained in \cite{Coulembier:TannakianCategoriesInPositiveCharacteristic}. For convenience we give short self-contained proofs of the required properties of $\Fr_+$. In fact, there are two versions of $\Fr_+$ depending on the choice of a subgroup $S \subseteq \Sh_p$, the symmetric group on $p$ letters, and either can be used in the following. The two choices are $S = \Sh_p$ and $S = \langle \sigma \rangle = \ZZ / p$ where $\sigma$ is the cyclic permutation $\sigma = (1,2, \ldots,p)$. For an object $X$ of $\Th$ there is a natural action of $S$ on $X^{\otimes p}$ and $\Fr_+ (X)$ is the image in $\Th$ of the composion:
\[
H^0 (S , X^{\otimes p}) \hookrightarrow X^{\otimes p} \overset{[\,]}{\twoheadrightarrow} H_0 (S , X^{\otimes p}) \; .
\]
Here one sets
\[
H^0 (S , X^{\otimes p}) = \ker \Big((\sigma - \id)_{\sigma} : X^{\otimes p} \to \prod_{\sigma \in S} X^{\otimes p} \Big)
\]
and
\[
H_0 (S , X^{\otimes p}) = \coker \Big( \sum_{\sigma} (\sigma - \id)_{\sigma} : \bigoplus_{\sigma \in S} X^{\otimes p} \to X^{\otimes p} \Big) \; .
\]
For a morphism $\varphi$ in $\Th$ there is a canonical induced morphism $\Fr_+ (\varphi)$ and $\Fr_+$ becomes a functor.

Let $\Th^{(p)}$ be the category with the same objects as $\Th$ and morphisms
\[
\Hom_{\Th^{(p)}} (X , Y) = \Hom_{\Th} (X , Y) \otimes_{k , (\,)^p} k \; .
\]
These are $k$-vector spaces via multiplication on the second factor. We view $\Fr_+$ as a functor from $\Th^{(p)}$ to $\Th$ by setting $\Fr_+ (\varphi \otimes \lambda) = \lambda \Fr_+ (\varphi)$. Note that this is well defined since for $\mu \in k$ we have $\Fr_+ (\mu \varphi) = \mu^p \Fr_+ (\varphi)$. Thus $\Fr_+ : \Th^{(p)} \to \Th$ is compatible with scalar multiplication. It is not difficult to see that $\Fr_+$ is also additive and hence $k$-linear. E.g. for $S = \Sh_p$ and morphisms $\varphi , \psi : X \to Y$ we have $\Fr_+ (\varphi + \psi) = \Fr_+ (\varphi) + \Fr_+ (\psi)$ because as morphisms from $H_0 (\Sh_p , X^{\otimes p})$ to $H_0 (\Sh_p , Y^{\otimes p})$ we have:
\[
[(\varphi + \psi)^{\otimes p}] = \sum^p_{i=0} {p \choose i} [\varphi^{\otimes i} \otimes \psi^{\otimes p-i}] = [\varphi^{\otimes p}] + [\psi^{\otimes p}] \; .
\]
Define a faithful tensor functor $\omega^{(p)} : \Th^{(p)} \to \nVec_k$ by setting $\omega^{(p)} (X) = \omega (X) \otimes_{k , (\,)^p} k$ and $\omega^{(p)} (\varphi \otimes \lambda ) = \omega (\varphi) \otimes \lambda$. Then $(\Th^{(p)} , \omega^{(p)})$ becomes a neutralized Tannakian category over $k$. If $G = \spec A$ is the Tannakian dual of $(\Th , \omega)$, then $G^{(p)} = \spec A^{(p)}$ with $A^{(p)} = A \otimes_{k , (\,)^p} k$ may be identified with the Tannakian dual of $(\Th^{(p)} , \omega^{(p)})$ because a $k$-linear $G^{(p)}$-action on $\omega^{(p)}(X)$ 
can be identified with a $k$-linear $G$-action on $\omega (X)$.

 Let $F_{\rel} : G \to G^{(p)}$ be the relative Frobenius morphism. It is the $k$-linear morphism corresponding to the map of $k$-algebras $F^{\sharp}_{\rel} : A^{(p)} \to A$ sending $a \otimes \lambda$ to $\lambda a^p$. The following facts can be found in \cite{Coulembier:TannakianCategoriesInPositiveCharacteristic}: 

\begin{prop}
	 \label{t41}
\mbox{}
\begin{enumerate}
	\item  For $V$ in $\nVec_k$ the natural map
\[
t (V) : V \otimes_{k , (\,)^p} k \xrightarrow{\sim} \Fr_+ (V) \; , \; v \otimes \lambda \longmapsto \lambda [v \otimes \ldots \otimes v]
\]
is a $k$-linear isomorphism. \\
\item The maps in a) induce a natural isomorphism of functors $\omega^{(p)} = \omega \circ \Fr_+ : \Th^{(p)} \to \nVec_k$ and the functor $\Fr_+ : \Th^{(p)} \to \Th$ can be identified with the functor $\Rep (F_{\rel}) : \Rep (G^{(p)}) \to \Rep (G)$ between the corresponding representation categories.\\
\item $\Fr_+$ is an exact tensor functor. 
\end{enumerate}
\end{prop}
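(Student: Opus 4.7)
The plan is to prove (a) by direct basis computation, deduce the natural isomorphism $\omega \circ \Fr_+ \simeq \omega^{(p)}$ of (b) using that $\omega$ is exact and tensor, then build the tensor structure on $\Fr_+$ and verify exactness for (c), and finally identify the corresponding morphism $G \to G^{(p)}$ with $F_{\rel}$ to complete (b).

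For (a), fix a basis $e_1, \ldots, e_n$ of $V$ and consider the induced basis $\{e_{\underline{j}} = e_{j_1} \otimes \cdots \otimes e_{j_p}\}$ of $V^{\otimes p}$, which $S$ permutes by positions. The orbits split into the $n$ singleton orbits $\{(i, \ldots, i)\}$ together with orbits of non-constant tuples; in both cases $S = \langle \sigma \rangle$ and $S = \Sh_p$ the non-constant orbits have size divisible by $p$. For $S = \langle\sigma\rangle \simeq \ZZ/p$ this is because the stabilizer of a non-constant tuple is trivial, forcing orbit size $p$. For $S = \Sh_p$ the orbit of a tuple with multiplicity profile $(m_1, \ldots, m_r)$, $\sum_s m_s = p$, has size $\binom{p}{m_1, \ldots, m_r}$, which Kummer's theorem shows is divisible by $p$ whenever some $m_s < p$. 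The orbit sum $\sum_{\underline{j} \in O} e_{\underline{j}}$ lies in $H^0(S, V^{\otimes p})$ and its image $|O|\,[e_{\underline{j}}]$ in $H_0(S, V^{\otimes p})$ therefore vanishes for non-constant $O$. Hence $\Fr_+(V)$ has $k$-basis $\{[e_i^{\otimes p}]\}_{i=1}^n$ and $t(V) \colon e_i \otimes 1 \mapsto [e_i^{\otimes p}]$ is a bijection on bases; $k$-linearity is clear, and naturality in $V$ follows from $f^{\otimes p}(v^{\otimes p}) = f(v)^{\otimes p}$.

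Since $\omega$ is exact and tensor, it commutes with the kernels, cokernels and tensor products occurring in the definition of $\Fr_+$, giving a natural isomorphism $\omega \circ \Fr_+ \simeq \Fr_+|_{\nVec_k} \circ \omega$; composing with $t$ from (a) yields $\omega \circ \Fr_+ \simeq \omega^{(p)}$. For the tensor structure in (c), the canonical reshuffle $(X \otimes Y)^{\otimes p} \simeq X^{\otimes p} \otimes Y^{\otimes p}$ is $S$-equivariant for the diagonal action on the right, and combined with the universal Künneth-type maps on $S$-invariants and $S$-coinvariants produces a natural morphism $\Fr_+(X) \otimes \Fr_+(Y) \to \Fr_+(X \otimes Y)$. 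Under $\omega$ it becomes the canonical iso $\omega(X)^{(p)} \otimes \omega(Y)^{(p)} \silo (\omega(X) \otimes \omega(Y))^{(p)}$; since a fibre functor is exact and faithful it reflects isomorphisms, so the morphism is an iso in $\Th$. The unit $\Fr_+(\mathds{1}) \simeq \mathds{1}$ and the coherence diagrams check the same way. Exactness of $\Fr_+$ follows because $\omega^{(p)} = \omega \circ \Fr_+$ sends short exact sequences to exact sequences (base change along the Frobenius of $k$ is flat), and exact faithful $\omega$ reflects exactness.

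For the remaining identification in (b), the tensor functor $\Fr_+ \colon (\Th^{(p)}, \omega^{(p)}) \to (\Th, \omega)$ corresponds under Tannaka duality (Lemma \ref{lemma: morphisms of groups and isom classes of functors}) to a morphism $\alpha \colon G \to G^{(p)}$. To check $\alpha = F_{\rel}$, I will compute the induced $G$-coaction on $\omega\Fr_+(X)$: writing the $G$-coaction on $\omega(X)$ as $e_i \mapsto \sum_j e_j \otimes a_{ji}$, the diagonal coaction on $\omega(X)^{\otimes p}$ evaluated at $e_i^{\otimes p}$ and reduced in $H_0$ equals $\sum_j [e_j^{\otimes p}] \otimes a_{ji}^p$, because for each non-constant orbit $O$ with multiplicity profile $(m_1, \ldots, m_r)$ the orbit sum $\sum_{\underline{j} \in O} \prod_l a_{j_l, i} = \binom{p}{m_1, \ldots, m_r} \prod_s a_{s, i}^{m_s}$ vanishes in characteristic $p$. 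This matches the pullback along $F^\sharp_{\rel} \colon a \otimes \lambda \mapsto \lambda a^p$ of the base-changed $G^{(p)}$-coaction $e_i \otimes 1 \mapsto \sum_j (e_j \otimes 1) \otimes (a_{ji} \otimes 1)$, so $\alpha = F_{\rel}$. The main obstacle throughout is the uniform orbit/divisibility bookkeeping in characteristic $p$: the same Kummer-theoretic vanishing of non-constant orbit sums drives both the computation of $\Fr_+$ on vector spaces in (a) and the Frobenius identification in (b).
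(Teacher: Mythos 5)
Your proof is correct and follows essentially the same route as the paper: the same $p$-divisibility of non-constant $S$-orbits in $V^{\otimes p}$ underlies both your part (a) and your comodule computation identifying the induced morphism with $F_{\rel}$, which is exactly the paper's argument. The only (harmless) differences are that you exhibit a basis of $\Fr_+(V)$ directly rather than reducing to $V=k$ via additivity of the two functors, and you establish the tensor structure and exactness of $\Fr_+$ before the Frobenius identification instead of deducing (iii) from (ii); note only that the additivity of $t(V)$ itself (which you dismiss as ``clear'') is the one point that genuinely needs the binomial/orbit vanishing you set up, so it deserves an explicit word.
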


\begin{proof}
(i) For $S = \Sh_p$ the map $t (V)$ is additive by the binomial theorem. Additivity in the case $S = \langle \sigma \rangle = \ZZ / p$ is also not difficult to see. Compatibility with scalar multiplications is obvious so that $t (V)$ is $k$-linear. Since $t (k)$ is an isomorphism and the functors $\_ \otimes_{k, (\,)^p} k$ and $\Fr_+$ are additive on $\nVec_k$, it follows that $t (V)$ is an isomorphism for all $V$.\\
(ii) For $X$ in $\operatorname{Ob}\,\Th^{(p)} = \operatorname{Ob}\,\Th$ the isomorphisms $t (\omega (X))$ from a) give the first of the isomorphisms $\omega^{(p)} = \Fr_+ \circ \, \omega = \omega \circ \Fr_+$. The second one follows since $\omega$ is an exact $k$-linear $\otimes$-functor. The $G^{(p)}$-action $\rho^{(p)}$ on $\omega^{(p)} (X)$ corresponds to a $G$-action $\rho$ on $V = \omega (X)$. By functoriality, we get an induced $G$-action $\Fr_+ (\rho)$ on $\Fr_+ (V)$ and via the isomorphism $t (V)$ a $G$-action $\rho'$ on $\omega^{(p)} (X) = V \otimes_{k , (\,)^p} k$. We have to show that $\rho' = \rho^{(p)} \circ F_{\rel}$. Let $\Delta : V \to V\otimes A$
  be the $A$-comodule map corresponding to $\rho$ and $\Delta_{\Fr_+ (V)}$ the induced $A$-comodule map on $\Fr_+ (V)$. For $v \in V$ set $\Delta (v) = \sum_i v_i\otimes a_i$. Then we have on $v \otimes 1 \in V \otimes_{k , (\,)^p} k$
\begin{align*}
(\Delta_{\Fr_+ (V)} \circ t (V)) (v \otimes 1) & = \Delta_{\Fr_+ (V)} ([v \otimes \ldots \otimes v]) \\
& = \sum_{i_1 , \ldots , i_p}  [v_{i_1} \otimes \ldots \otimes v_{ip}]\otimes a_{i_1} \cdots a_{i_p}  \\
& = \sum_i  [v_i \otimes \ldots \otimes v_i]\otimes a^p_i \; .
\end{align*} 
The last equation holds because if the stabilizer $S_{\ui}$ of $\ui = (i_1 , \dots , i_p)$ in $S = \langle \sigma \rangle = \ZZ / p \subseteq \Sh_p$ is not $S$, i.e. if not all $i_1 , \ldots , i_p$ are equal, then $S_{\ui} = 1$ and we sum $|S| = p$-times the same element in $A \otimes \Fr_+ (V) \subseteq A \otimes H_0 (S , V^{\otimes p})$ which gives zero. For $S = \Sh_p$ this is all the more true since $H_0 (\Sh_p , V^{\otimes p})$ is a quotient of $H_0 (\ZZ / p , V^{\otimes p})$. On the other hand, we have
\begin{align*}
\lefteqn{(( t (V)\otimes \id_A) \circ (\id\otimes F^{\sharp}_{\rel}) \circ \Delta^{(p)}) (v \otimes 1)} \\
& = ((t(V)\otimes\id_A) \circ (\id\otimes F^{\sharp}_{\rel})) \Big( \sum_i  (v_i \otimes 1) \otimes (a_i \otimes 1) \Big) \\
& = (t(V)\otimes\id_A) \Big( \sum_i (v_i \otimes 1) \otimes a^p_i \Big) = \sum_i [v_i \otimes \ldots \otimes v_i]\otimes  a^p_i \; .
\end{align*}
Thus we have checked that $\rho' = \rho^{(p)} \circ F_{\rel}$ and b) is proved.\\
(iii) These assertions follow from the interpretation of $\Fr_+$ as $\Rep (F_{\rel})$ in (ii).
\end{proof}

\begin{cor}
\label{t42}
For a neutralized Tannakian category $(\Th , \omega)$ over a perfect field $k$ of characteristic $p > 0$ let $G$ be its Tannakian dual.
\begin{enumerate}
	\item  The group scheme $G$ is reduced if and only if the functor $\Fr_+ : \Th^{(p)} \to \Th$ is fully faithful and every subobject of $\Fr_+ (T)$ for some $T$ in $\operatorname{Ob}\, \Th^{(p)}$ is isomorphic to $\Fr_+ (T')$ for some subobject $T'$ of $T$.
\item The group scheme $G$ is perfect if and only if $\Fr_+$ is an equivalence of categories.
\end{enumerate}
\end{cor}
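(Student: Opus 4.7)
The plan is to combine Proposition~\ref{t41}(ii), which identifies $\Fr_+$ with $\Rep(F_{\rel}) \colon \Rep(G^{(p)}) \to \Rep(G)$, with the standard Tannakian dictionary of \cite[Prop.~2.21]{DeligneMilne:TannakianCategories}. The latter says that a morphism $\varphi \colon G \to H$ of $k$\=/groups is faithfully flat if and only if $\Rep(\varphi)$ is fully faithful and stable under subobjects in the sense that every subobject of $\Rep(\varphi)(X)$ is isomorphic to $\Rep(\varphi)(X')$ for some subobject $X' \subseteq X$, and that $\varphi$ is an isomorphism if and only if $\Rep(\varphi)$ is an equivalence of categories (equivalently, $\varphi$ is both faithfully flat and a closed immersion). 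With this dictionary in hand, the corollary is reduced to the assertions that $G$ is reduced iff $F_{\rel}$ is faithfully flat, and that $G$ is perfect iff $F_{\rel}$ is an isomorphism.

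Both of these reductions I would handle by direct analysis of the Hopf algebra map
\[
F_{\rel}^{\sharp} \colon A^{(p)} \to A, \quad a \otimes \lambda \mapsto \lambda a^p,
\]
where $A = k[G]$. Faithful flatness of $F_{\rel}$ is equivalent to injectivity of $F_{\rel}^{\sharp}$, and $F_{\rel}$ is an isomorphism iff $F_{\rel}^{\sharp}$ is bijective. Fix a $k$\=/basis $\{e_i\}$ of $A$; since $k$ is perfect of characteristic $p$, the freshman's dream yields the identity $\sum_i \lambda_i e_i^p = \bigl(\sum_i \lambda_i^{1/p} e_i\bigr)^p$ in $A$. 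From this identity a nontrivial linear relation among the $e_i^p$ produces a nonzero nilpotent in $A$, and conversely any nonzero $a \in A$ with $a^p = 0$ (which exists whenever $A$ is non-reduced) produces such a relation; hence $F_{\rel}^{\sharp}$ is injective iff $A$ is reduced, i.e.~iff $G$ is reduced. Applied in the other direction, the same identity shows that $F_{\rel}^{\sharp}$ is surjective iff the $p$\=/power map $(\,)^p \colon A \to A$ is surjective. Combining these two observations, $F_{\rel}^{\sharp}$ is bijective iff $(\,)^p$ is bijective on $A$, i.e.~iff $G$ is perfect.

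Putting the two ingredients together yields (i) and (ii). I expect the only real subtlety to be bookkeeping: one must verify that the ``closed under subobjects'' condition in the Deligne--Milne criterion translates verbatim to the formulation used in the statement of (i), namely that every subobject of $\Fr_+(T)$ be isomorphic to $\Fr_+(T')$ for some subobject $T' \subseteq T$. This is automatic once one invokes the identification $\Fr_+ \simeq \Rep(F_{\rel})$ from Proposition~\ref{t41}(ii). For (ii), the tensor structure on $\Fr_+$ supplied by Proposition~\ref{t41}(iii) ensures that ``equivalence of categories'' upgrades to ``equivalence of tensor categories'', which is precisely what is needed to conclude that $F_{\rel}$ is an isomorphism of $k$\=/groups.
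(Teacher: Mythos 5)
Your proof is correct and takes essentially the same route as the paper: both use Proposition~\ref{t41}(ii) together with \cite[Prop.~2.21]{DeligneMilne:TannakianCategories} to reduce the corollary to the statements that $G$ is reduced (resp.\ perfect) if and only if $F_{\rel}$ is faithfully flat (resp.\ an isomorphism), and then settle these at the level of $F^{\sharp}_{\rel}\colon A^{(p)}\to A$. The only difference is that you spell out the elementary basis computation identifying injectivity (resp.\ bijectivity) of $F^{\sharp}_{\rel}$ with injectivity (resp.\ bijectivity) of $(\,)^p$ on $A$, where the paper instead cites \cite[14.1 Theorem]{W} and the isomorphism $A^{(p)}\cong A$ of $\Fa_p$-algebras.
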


\begin{proof}
(i) 
Note that $G$ is reduced if and only if $(\;)^p$ is injective on $A$. This condition is equivalent to the map $F^{\sharp}_{\rel} : A^{(p)} \to A$ being injective, i.e. to the relative Frobenius $F_{\rel} : G \to G^{(p)}$ being faithfully flat, c.f. \cite[14.1 Theorem]{W}. By \cite[Proposition 2.21]{DeligneMilne:TannakianCategories} and Proposition \ref{t41}, (ii) this translates into the stated properties of $\Fr_+ = \Rep (F_{\rel})$. \\
(ii) $G$ is perfect if and only if $(\;)^p$ is an automorphism of $A$ as an $\Fa_p$-algebra. Since $A^{(p)} \cong A$ as $\Fa_p$-algebras this is equivalent to $F^{\sharp}_{\rel}$ being an isomorphism of $k$-algebras i.e. to $F_{\rel}$ being an isomorphism of $k$-group schemes. By Tannakian duality and Proposition \ref{t41} (ii) this is equivalent to $\Fr_+ = \Rep (F_{\rel})$ being an equivalence of categories.
\end{proof}

\begin{cor}
\label{t43}
	Let $k$ be a perfect field of positive characteristic $p$.
	\begin{enumerate}
		\item For a discrete group $\Gamma$ the proalgebraic completion $\Gamma^{\alg}$ over $k$ is a perfect group scheme over $k$.
		\item Let $X$ be a connected topological space and $x_0 \in X$ a point. Then the proalgebraic fundamental group $\pi_k (X , x_0)$ over $k$ is a perfect group scheme over $k$. 
	\end{enumerate}
\end{cor}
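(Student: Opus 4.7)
The plan is to apply Corollary \ref{t42}(ii) in each case, reducing perfectness of the Tannakian dual to showing that $\Fr_+ : \Th^{(p)} \to \Th$ is an equivalence of categories, where $\Th = \Rep(\Gamma)$ (with its forgetful fibre functor) for (i) and $\Th = \uLoc_k(X)$ (with fibre functor $\omega_{x_0}$) for (ii). The key enabling observation is that $k$ being perfect means $\Fr : k \to k$ is an automorphism, so both the Frobenius twist $V \mapsto V^{(p)} := V \otimes_{k, \Fr} k$ and its inverse $V \mapsto V^{(1/p)} := V \otimes_{k, \Fr^{-1}} k$ are mutually quasi-inverse autoequivalences of $\nVec_k$.

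For (i), I plan to lift the Frobenius twist from $\nVec_k$ to $\Rep(\Gamma)$ by endowing $V^{(p)}$ with the $\Gamma$-action $\gamma(v \otimes \lambda) := \rho(\gamma)(v) \otimes \lambda$; one checks this is well-defined modulo the relation $\mu v \otimes \lambda = v \otimes \mu^p \lambda$, is $k$-linear, and is multiplicative in $\gamma$. Defining $V^{(1/p)}$ analogously and noting that the natural $k$-vector-space isomorphism $V \cong (V^{(p)})^{(1/p)}$ is manifestly $\Gamma$-equivariant yields mutually quasi-inverse endofunctors of $\Rep(\Gamma)$. The crucial step is then to identify $\Fr_+$ with this concrete Frobenius twist, by verifying that the natural isomorphism $t(V) : V^{(p)} \xrightarrow{\sim} \Fr_+(V)$ of Proposition \ref{t41}(i) is $\Gamma$-equivariant; this is essentially the calculation in the proof of Proposition \ref{t41}(ii) applied to $G = \Gamma^{\alg}$, tracing through the induced $\Gamma$-action on classes $[v \otimes \cdots \otimes v] \in H_0(S, V^{\otimes p})$. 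Once this identification is established, essential surjectivity of $\Fr_+$ follows from the quasi-inverse, and fully faithfulness reduces to the bijection $\Hom_k(V, W) \otimes_{k, \Fr} k \xrightarrow{\sim} \Hom_k(V^{(p)}, W^{(p)})$ (which exists because $\Fr$ is bijective on $k$), restricted to $\Gamma$-equivariant morphisms on each side.

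For (ii), the same plan goes through with $\uLoc_k(X)$ in place of $\Rep(\Gamma)$. The Frobenius twist of a local system $\Eh$ is defined as the sheafification of the presheaf $U \mapsto \Eh(U) \otimes_{k, \Fr} k$, or equivalently computed stalk-wise. On a trivializing open $U$ where $\Eh|_U \cong \underline{k^r}$, we obtain $\Eh^{(p)}|_U \cong \underline{(k^r)^{(p)}} \cong \underline{k^r}$, so $\Eh^{(p)}$ remains a local system; the inverse twist is defined analogously using perfectness of $k$. The identification of $\Fr_+$ with this twist, and the verifications of essential surjectivity and fully faithfulness, carry over from (i) \emph{mutatis mutandis} in the sheaf setting.

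The main obstacle I foresee is the bookkeeping required to match the abstract definition of $\Fr_+$ (as the image of $H^0(S, -^{\otimes p}) \to H_0(S, -^{\otimes p})$) with the concrete Frobenius twist functor, and in particular to check $\Gamma$-equivariance of $t(V)$ in part (i). Once that identification is made, perfectness of $k$ does the remaining work essentially formally in both (i) and (ii).
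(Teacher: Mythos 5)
Your proposal is correct and follows essentially the same route as the paper: in both parts one identifies $\Fr_+$ with the Frobenius twist $\_\otimes_{k,(\,)^p}k$ via the natural isomorphism $t$ of Proposition \ref{t41}(i) (checking $\Gamma$-equivariance in (i), resp. that $t(\Eh)$ is a sheaf isomorphism by computing stalks in (ii)), observes that the twist is an equivalence because $k$ is perfect, and concludes by Corollary \ref{t42}(ii). Your explicit construction of the quasi-inverse twist $V^{(1/p)}$ is just a spelled-out version of the paper's one-line assertion that the twist is an equivalence.
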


\begin{ex}
\label{t44}
For $\Gamma = \ZZ$ and $k$ algebraically closed of characteristic $p > 0$ one may find the calculation of $\ZZ^{\alg}$ in \cite[Annexe A]{S}. There is a canonical isomorphism:
\[
\ZZ^{\alg} = \spec k [k^{\times}] \times \ZZ_p \; .
\]
Here $\spec k [k^{\times}]$ is the diagonalizable group scheme on $k^{\times}$ over $k$ and $\ZZ_p$ denotes the unipotent, pro-\'etale group scheme obtained as the projective limit of the constant group schemes $\ZZ / p^n$. Since $k$ is perfect, it is clear that $\ZZ^{\alg}$ is perfect as well in accordance with the corollary.
\end{ex}

\begin{proof}
(i) By definition, $\Gamma^{\alg}$ is the Tannakian dual of the category $\Rep (\Gamma)$ of representations of $\Gamma$ on finite dimensional $k$-vector spaces $V$. The functor $\Fr_+$ sends the $\Gamma$-representation $V$ to the induced $\Gamma$-representation on the vector space $\Fr_+ (V)$. The $k$-linear automorphism $t (V)$ in Proposition~\ref{t41}~(i) is $\Gamma$-equivariant. It follows that the functor $\Fr_+ : \Rep (\Gamma)^{(p)} \to \Rep (\Gamma)$ is isomorphic to the functor $\_ \otimes_{k , (\,)^p} k$ which is an equivalence of categories. Now we use Corollary~\ref{t42}~(ii).\\
(ii) By definition $\pi_k (X , x_0)$ is the Tannakian dual of the category $\loc_k (X)$ of locally constant sheaves $\Eh$ of finite dimensional $k$-vector spaces on $X$ together with the fibre functor $\omega (\Eh) = \Eh_{x_0}$. The functor $\Fr_+$ sends $\Eh$ to the locally constant sheaf $\Fr_+ (\Eh)$. We define a map of sheaves
\[
t (\Eh) : \Eh \otimes_{k , (\,)^p} k \longrightarrow \Fr_+ (\Eh) 
\]
by sending $s \otimes \lambda$ for a local section $s$ of $\Eh$ to the image in $\Fr_+ (\Eh)$ of $\lambda [s \otimes \ldots \otimes s]$. For the stalk at a point $x \in X$ we have $t (\Eh)_x = t (\Eh_x)$. Since the $t (\Eh_x)$ are isomorphisms of $k$-vector spaces by Proposition \ref{t41} (i), it follows that $t (\Eh)$ is a $k$-linear isomorphism of sheaves. Hence the functor $\Fr_+ : \loc_k (X)^{(p)} \to \loc_k (X)$ is isomorphic to the functor $\_ \otimes_{k , (\,)^p} k$ which is an equivalence of categories. Now we use Corollary \ref{t42} (ii).
\end{proof}

\begin{remark}
In the remark after \cite[Theorem 2.5]{D} we had previously shown that $\pi_k (X , x_0)$ was reduced.
\end{remark}
\section{The proalgebraic fundamental group of the Kucharczyk-Scholze space} \label{sec:5}
Given a field $F$ of characteristic zero containing all roots of unity, and an injective homomorphism $\mu (F) \hookrightarrow \C^{\times}$, Kucharczyk and Scholze have defined a scheme $\eX_F$ over $\C$ and a canonical isomorphism \cite[Corollary 4.15]{KS}
\begin{equation}
\label{eq:5-8}
\pi^{\et}_1 (\eX_F (\C) , x) = G_F \; .
\end{equation}
Here $\eX_F (\C)$ carries the complex topology and $x \in \eX_F (\C)$ is a point. The \'etale fundamental group $\pi^{\et}_1 (Z , z)$ of a pointed connected topological space $(Z , z)$ is defined in \cite[Section 2.2]{KS} using the formalism of Grothendieck categories. It is a profinite group which classifies the finite coverings of $Z$. Finally $G_F$ is a version of the absolute Galois group of $F$ defined via a fibre functor related to $(\eX_F , x)$. The group $G_F$ is isomorphic to the usual Galois group $\Aut_F (\oF)$ where $\oF$ is an algebraic closure of $F$, the isomorphism being unique up to an inner isomorphism. By \cite[Theorem~3.6]{D}, the \'etale fundamental group $\pi^{\et}_1 (Z,z)$ viewed as a group-scheme over a field $k$ is canonically isomorphic to the maximal pro-\'etale quotient of the proalgebraic group $\pi_k (Z,z)$ i.e.
\[
\pi_k (Z,z)^{\et} \silo \pi^{\et}_1 (Z,z) \; .
\]
In particular, we have
\[
\pi_k (\eX_F (\C) , x)^{\et} \silo \pi^{\et}_1 (\eX_F (\C) , x) = G_F \; .
\]
Thus the group of connected components of $\pi_k (\eX (\C) , x)$ may be identified with $G_F$. The connected component $\pi_k (\eX_F (\C) , x)^0$ is abelian and we determine its structure. We also discuss relations with the motivic Galois groups of $F$ and $\oF$. For the calculation of $\pi_k (\eX_F (\C) , x)$ we use the same approach as in \cite{KS} applied to $\pi_k$ instead of $\pi^{\et}_1$.

Given a connected topological space $Z$ and a continuous path $\gamma : [0,1] \to Z$ with $\gamma (0) = z_0$ and $\gamma (1) = z_1$ we obtain a $\otimes$-isomorphism $\omega_{\gamma} : \omega_{z_0} \to \omega_{z_1}$ between the fibre functors $\omega_{z_i} : \uLoc_k (Z) \to \nVec_k$ for $i = 0,1$ as follows: For $\Eh$ in $\uLoc_k (Z)$ the locally constant sheaf $\gamma^{-1} \Eh$ is constant on $[0,1]$. Evaluation therefore provides isomorphisms
\[
\omega_{z_0} (\Eh) = \Eh_{z_0} \underset{\ev_{z_0}}{\xleftarrow{\sim}} \Gamma ([0,1] , \gamma^{-1} \Eh) \underset{\ev_{z_1}}{\xrightarrow{\sim}} \Eh_{z_1} \cong \omega_{z_1} (\Eh) \; .
\]
The natural transformation $\omega_{\gamma} = \ev_{z_1} \circ \ev^{-1}_{z_0}$ depends only on the homotopy class of $\gamma$. Let
\[
\Aut^{\otimes} (\omega_{z_0}) \underset{L_{\omega_{\gamma}}}{\xrightarrow{\sim}} \Iso^{\otimes} (\omega_{z_0} , \omega_{z_1}) \underset{R_{\omega_{\gamma}}}{\xleftarrow{\sim}} \Aut^{\otimes} (\omega_{z_1})
\]
be defined by left resp. right translation of the point $\omega_{\gamma} \in \Iso^{\otimes} (\omega_{z_0} , \omega_{z_1}) (k)$. We obtain an isomorphism of $k$-groups
\[
\gamma_k = R^{-1}_{\omega_{\gamma}} \circ L_{\omega_{\gamma}} : \pi_k (Z,z_0) \silo \pi_k (Z, z_1) \; .
\]

\begin{prop} \label{t51}
Let $f,g : X \to Z$ be continuous maps between connected topological spaces. Let $H : X \times [0,1] \to Z$ be a homotopy between $f$ and $g$. For a fixed point $\tx \in X$ consider the path $\gamma : [0,1] \to Z$ from $f (\tx)$ to $g (\tx)$ defined by $\gamma (t) = H (\tx , t)$. Then the following diagram of $k$-groups commutes
\[
\xymatrix{
 & \pi_k (Z , f (\tx)) \ar[dd]^{\wr \; \gamma_k}\\
\pi_k (X , \tx) \ar[ur]^{f_{\ast}} \ar[dr]_{g_{\ast}} & \\
 & \pi_k (Z , g (\tx)) \; . 
}
\]
\end{prop}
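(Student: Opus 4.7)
The plan is to invoke Tannakian duality to reduce the commutativity to an isomorphism of tensor functors, and then to construct that isomorphism from the homotopy $H$. By Lemma~\ref{lemma: morphisms of groups and isom classes of functors}, the morphisms $f_*$, $g_*$ and $\gamma_k$ correspond respectively to the tensor functors $(f^{-1}, \id)$, $(g^{-1}, \id)$ and $(\id_{\uLoc_k (Z)}, \omega_\gamma)$ between the appropriate neutralized Tannakian categories. Using the composition rule for tensor functors between neutralized Tannakian categories, $\gamma_k \circ f_*$ corresponds to the tensor functor $(f^{-1}, \omega_\gamma \colon \omega_{\tx} f^{-1} = \omega_{f(\tx)} \to \omega_{g(\tx)})$. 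By Lemma~\ref{lemma: morphisms of groups and isom classes of functors}, it therefore suffices to produce an isomorphism of tensor functors $\delta \colon f^{-1} \to g^{-1}$ such that $\omega_{\tx}(\delta_\Eh) = \omega_\gamma$ for every $\Eh \in \uLoc_k (Z)$.

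To construct $\delta$, I would rely on the following homotopy invariance lemma for local systems, valid for an arbitrary topological space $X$: for any local system $\Fh$ on $X \times [0,1]$, there is a canonical isomorphism $i_0^{-1} \Fh \silo i_1^{-1} \Fh$ of local systems on $X$, where $i_t \colon X \to X \times [0,1]$ denotes the inclusion at height $t$. Applying this to $\Fh = H^{-1} \Eh$ and using $H \circ i_0 = f$, $H \circ i_1 = g$ then provides the desired $\delta_\Eh \colon f^{-1} \Eh \to g^{-1} \Eh$. To prove the lemma, for each $x \in X$ I use compactness of $\{x\} \times [0,1]$ to extract an open neighborhood $U_x$ of $x$ together with a subdivision $0 = s_0 < \cdots < s_n = 1$ such that $\Fh$ is (isomorphic to) a constant sheaf on each slab $U_x \times [s_{i-1}, s_i]$. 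The resulting tautological identifications $\Fh|_{U_x \times \{s_{i-1}\}} = \Fh|_{U_x \times \{s_i\}}$ compose to an isomorphism over $U_x$; a common-refinement argument shows this isomorphism is independent of all choices, so the local pieces glue to a global isomorphism.

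Naturality of $\delta$ in $\Eh$ and compatibility with tensor products follow at once from the functoriality of $H^{-1}$ and the fact that the slab identifications are compatible with these operations. Moreover, the stalk $\omega_{\tx}(\delta_\Eh)$ equals, by construction, parallel transport in $H^{-1} \Eh$ along the arc $\{\tx\} \times [0,1]$, which in turn equals parallel transport in $\Eh$ along the path $\gamma(t) = H(\tx, t)$; since $\gamma^{-1} \Eh$ is constant on the simply connected interval $[0,1]$, this parallel transport coincides with $\omega_\gamma = \ev_{g(\tx)} \circ \ev_{f(\tx)}^{-1}$ by the very definition, which completes the argument. The main obstacle is the homotopy invariance lemma itself, which must be established without any hypothesis on $X$ beyond being a topological space; its proof rests on exploiting compactness of $[0,1]$ to produce finite slab decompositions and then verifying, via a common refinement, that the resulting isomorphism is genuinely canonical and glues across varying $x$.
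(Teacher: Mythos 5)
Your proposal is correct and its overall architecture coincides with the paper's: both reduce the statement to producing an isomorphism of tensor functors $f^{-1}\cong g^{-1}\colon \uLoc_k(Z)\to\uLoc_k(X)$ whose component at the stalk over $\tx$ is $\omega_\gamma$, and then conclude by the Tannakian formalism (you invoke Lemma~\ref{lemma: morphisms of groups and isom classes of functors} directly, while the paper verifies the resulting identity $L_{\omega_\gamma}\circ f_*=R_{\omega_\gamma}\circ g_*$ on $R$-points by hand; these are the same computation). Where you genuinely diverge is in the proof of the key sheaf-theoretic input. The paper isolates a general lemma (Lemma~\ref{t52}): for $\pi\colon X\times[0,1]\to X$ with sections $s_0,s_1$ and $\Fh=H^{-1}\Eh$, the natural maps $\pi_*\Fh\to s_i^{-1}\Fh$ are isomorphisms, which is proved stalkwise using the tube lemma, quasi-compactness of the fibres (via \cite[Tag 09V3]{stacks-project}) and the constancy of local systems on $[0,1]$; the comparison $s_0^{-1}\Fh\cong\pi_*\Fh\cong s_1^{-1}\Fh$ is then canonical by construction, since both restrictions are compared to the single sheaf $\pi_*\Fh$. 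You instead build the isomorphism $i_0^{-1}\Fh\to i_1^{-1}\Fh$ by a finite slab decomposition $U_x\times[s_{i-1},s_i]$ and compose the slabwise identifications; this is really the same mechanism applied slab by slab (each slab identification is the pushforward comparison for that slab), but it forces you to carry out the common-refinement verification that the result is independent of the subdivision and glues over varying $x$ — a routine but nontrivial bookkeeping step that the paper's formulation avoids entirely, at the cost of stating and checking the hypotheses of the more general Lemma~\ref{t52} (which the paper also reuses, via Example~\ref{t53}, for arbitrary compact simply connected parameter spaces $K$, not just $[0,1]$). Both routes rest on the same two facts — compactness of $[0,1]$ and triviality of local systems on it — and your identification of the stalk at $\tx$ with parallel transport along $\gamma$, hence with $\omega_\gamma=\ev_{g(\tx)}\circ\ev_{f(\tx)}^{-1}$, matches the paper's ``by construction, $\uH(\,)_{\tx}=\omega_\gamma$''.
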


For the proof we need a lemma on local systems. The formulation is a little bit involved because apart from connectedness the topological spaces $X , Z$ in Proposition \ref{t51} are arbitrary.

\begin{lemma} \label{t52}
Consider continuous maps $Y\overset{\displaystyle\xrightarrow{\pi}}{\underset{S}{\longleftarrow}} X$ between connected topological spaces such that $\pi \circ s = \id$. Assume that for all $x \in X$ the following conditions hold:
\begin{enumerate}
	\item  The fibre $\pi^{-1} (x)$ is a quasi-compact subspace of $Y$ and any two different points of $\pi^{-1} (x)$ have disjoint neighborhoods in $Y$.
	\item For any open subset $V$ of $Y$ with $\pi^{-1} (x) \subset V$ there is an open neighborhood $x \in U \subset X$ with $\pi^{-1} (U) \subset V$.
	\item All local systems in $\uLoc_k (\pi^{-1} (x))$ are constant.
\end{enumerate}
	Then for any local system $\Fh$ in $\uLoc_k (Y)$ the natural map of sheaves $\pi_{\ast} \Fh \to s^{-1} \Fh$ is an isomorphism.	
\end{lemma}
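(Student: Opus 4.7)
The plan is to check that the natural map $\pi_*\Fh \to s^{-1}\Fh$ is an isomorphism at every stalk. Writing $K := \pi^{-1}(x)$ for $x \in X$, this amounts to showing that
\[
(\pi_*\Fh)_x \;=\; \varinjlim_{U \ni x \text{ open in } X} \Fh(\pi^{-1}(U)) \;\longrightarrow\; \Fh_{s(x)} \;=\; (s^{-1}\Fh)_x, \qquad \sigma \longmapsto \sigma_{s(x)},
\]
is bijective.

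The argument proceeds in three reductions. First, condition (ii) says that the sets $\pi^{-1}(U)$ with $x \in U$ open are cofinal among all open neighborhoods of $K$ in $Y$, so $(\pi_*\Fh)_x = \varinjlim_{V \supset K} \Fh(V)$. Second, one identifies this colimit with $\Gamma(K, \Fh|_K)$. Third, condition (iii) then identifies $\Gamma(K,\Fh|_K)$ with $\Fh_{s(x)}$: the local system $\Fh|_K$ is constant, say $\Fh|_K \cong \underline{V}$ for a finite-dimensional $k$-vector space $V$, and (iii) moreover forces $K$ to be connected (were $K$ to have two or more connected components, one could exhibit a local system of differing ranks on different components, which is not constant). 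Hence $\Gamma(K,\Fh|_K) = V$, and evaluation at $s(x)$ yields $V \cong \Fh_{s(x)}$; chasing through the identifications shows that the resulting composite isomorphism is precisely the natural stalk map.

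The main obstacle is the second reduction, i.e.\ showing $\varinjlim_{V \supset K}\Fh(V) \silo \Gamma(K,\Fh|_K)$. Injectivity is straightforward: if $\sigma \in \Fh(V)$ has vanishing restriction to $K$, the open locus $\{y \in V : \sigma_y = 0\}$ contains $K$, and by (ii) contains some $\pi^{-1}(U')$, on which $\sigma$ vanishes. For surjectivity one must extend a section $\tau \in \Gamma(K,\Fh|_K)$ to some open neighborhood of $K$. Passing to the \'etal\'e space $p : E \to Y$ of $\Fh$, $\tau$ is a continuous lift $K \to E$ of the inclusion. For each $y \in K$, local triviality of $p$ yields a continuous section $\alpha_y : W_y \to E$ on a trivializing open $W_y \ni y$ in $Y$ through $\tau(y)$; after shrinking we may assume $\alpha_y$ and $\tau$ agree on $W_y \cap K$. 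By the quasi-compactness in (i) one extracts a finite subcover $W_{y_1},\ldots,W_{y_n}$ of $K$. Because $p$ has discrete fibres, the agreement locus of any two $\alpha_{y_i}, \alpha_{y_j}$ on $W_{y_i} \cap W_{y_j}$ is both open and closed in $W_{y_i} \cap W_{y_j}$ and contains the overlap with $K$. Using the Hausdorff separation of points of $K$ in (i), one shrinks the $W_{y_i}$ to a refined cover $W'_{y_i} \ni y_i$ whose pairwise overlaps lie inside the corresponding agreement loci; the $\alpha_{y_i}$ then glue on $\Omega := \bigcup_i W'_{y_i} \supset K$ to a single section $\tilde\tau \in \Fh(\Omega)$. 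A final application of (ii) produces $U \ni x$ with $\pi^{-1}(U) \subset \Omega$, and $\tilde\tau|_{\pi^{-1}(U)}$ is the required extension of $\tau$.
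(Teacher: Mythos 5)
Your argument has the same skeleton as the paper's proof: reduce to stalks, use (ii) to identify $(\pi_*\Fh)_x$ with $\colim_{V\supset \pi^{-1}(x)}\Fh(V)$, identify that colimit with $\Gamma(\pi^{-1}(x),\Fh|_{\pi^{-1}(x)})$, and finish with (iii). The one genuine divergence is the middle step: the paper disposes of the identification $\colim_{V\supset K}\Fh(V)\cong\Gamma(K,\Fh|_K)$ by citing \cite[Tag 09V3]{stacks-project} (which is exactly where hypothesis (i) is consumed), whereas you re-derive it by hand via the \'etal\'e space. Your version is self-contained and, restricted to local systems, more concrete; the citation buys brevity and validity for arbitrary sheaves. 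Your observation that (iii) forces $\pi^{-1}(x)$ to be connected is a point worth making explicit: it is what guarantees that evaluation at $s(x)$ is an isomorphism out of $\Gamma(K,\Fh|_K)$, and the paper leaves it implicit.

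Two remarks on the hand-made argument. First, ``$p$ has discrete fibres'' is not by itself the reason the agreement locus of two sections is closed: every \'etal\'e space has discrete fibres, yet for a general sheaf the agreement locus is open but need not be closed. For a local system it is indeed clopen (locally, sections are locally constant maps into a discrete set), but only openness is actually needed. Second, the shrinking step is the one place where care is required and your sketch is thin: the complement $B_{ij}$ of the agreement locus $A_{ij}$ is closed only in $W_{y_i}\cap W_{y_j}$, not in $Y$, so one cannot simply delete it from the charts. The mechanism that works (and is how the cited Stacks lemma is proved) is the following. The sets $K\setminus W_{y_j}$ and $K\setminus W_{y_i}$ are disjoint and quasi-compact, so by (i) and the usual bootstrapping they admit disjoint open neighborhoods $V_i,V_j$ in $Y$; setting $W'_{y_i}=W_{y_i}\cap(V_i\cup A_{ij})$ and $W'_{y_j}=W_{y_j}\cap(V_j\cup A_{ij})$ one checks that these still cover $K\cap(W_{y_i}\cup W_{y_j})$ and that $W'_{y_i}\cap W'_{y_j}\subseteq A_{ij}$ because $V_i\cap V_j=\emptyset$; induction on the number of charts then glues all the $\alpha_{y_i}$. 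With that step spelled out, your proof is complete and correct.
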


\begin{ex} \label{t53}
\rm Let $K$ be a compact, path-connected, locally path-connected and simply connected topological space. Let $X$ be a connected topological space. Then conditions (i)-(iii) in Lemma \ref{t52} are satisfied for $Y = X \times K \overset{\displaystyle \xrightarrow{\pi}}{\underset{s}{\longleftarrow}} X$, where $s$ is any continuous section of the projection $\pi$.
\end{ex}

Namely, (i) is clear and (iii) follows from the equivalences
\[
\uLoc_k (\pi^{-1} (x)) \cong \uLoc_k (K) \cong \Rep (\pi_1 (K,t)) \; , \;t \in K
\]
which are valid by our hypothesis, and the fact that $\pi_1 (K,t)$ is trivial. As for (ii), let $V \subset Y$ be open with $V \supset \pi^{-1} (x) = \{ x \} \times K$. For any point $t \in K$, there are open neighborhoods $U_t$ of $x$ in $X$ and $W_t$ of $t$ in $K$ such that $V \supset U_t \times W_t$. Since $K$ is compact, there are finitely many points $t_1 , \ldots , t_n \in K$ such that $W_{t_1} , \ldots , W_{t_n}$ cover $K$. Then $U = U_{t_1} \cap \ldots \cap U_{t_n}$ is an open neighborhood of $x$ in $X$ with $V \supset U \times W_{t_i}$ for all $i$, and hence $V \supset U \times K = \pi^{-1} (U)$. 
\bigskip

\noindent \textit{Proof of Lemma \ref{t52}.} We recall the definition of the map $\pi_{\ast} \Fh \to s^{-1} \Fh$. Let $s^0 \Fh$ be the presheaf on $X$ with sections
\[
(s^0 \Fh) (U) = \colim_{V \supset s (U)} \Fh (V) \; .
\]
The relation $\pi \circ s = \id$ implies that $\pi^{-1} (U) \supset s (U)$. The resulting natural maps
\[
(\pi_* \Fh) (U) = \Fh (\pi^{-1} (U)) \longrightarrow (s^0 \Fh) (U)
\]
give a map of presheaves $\pi_* \Fh \to s^0 \Fh$ and hence a map of sheaves $\pi_* \Fh \to s^{-1} \Fh$. It suffices to check that the induced maps on the stalks are isomorphisms
\[
(\pi_* \Fh)_{x} = \colim_{U \ni x} \Fh (\pi^{-1} (U)) \longrightarrow (s^{-1} \Fh)_x = \Fh_{s (x)} \; .
\]
For this, note that because of (ii) we have
\[
(\pi_* \Fh)_x = \colim_{V \supset \pi^{-1} (x)} \Fh (V) \; .
\]
Condition (i) allows us to apply \cite[\href{https://stacks.math.columbia.edu/tag/09V3}{Tag 09V3}]{stacks-project}
to deduce that
\[
\colim_{V \supset \pi^{-1} (x)} \Fh (V) = \Gamma (\pi^{-1} (x) , \Fh \, |_{\pi^{-1} (x)}) \; .
\]
Finally, since $\Fh \, |_{\pi^{-1} (x)}$ is a constant local system by condition (iii), the evaluation map in the point $s (x) \in \pi^{-1} (x)$ gives an isomorphism
\[
\Gamma (\pi^{-1} (x) , \Fh \, |_{\pi^{-1} (x)}) \underset{\ev_{s (x)}}{\silo} \Fh_{s (x)} = (s^{-1} \Fh)_x \; .
\]
The composition of these three isomorphisms is the map $(\pi_* \Fh)_x \to (s^{-1} \Fh)_x$ above and we are done. \beweisende

Now we can turn to the \textit{proof} of Proposition \ref{t51}. Using the homotopy $H$ we first construct an isomorphism of $\otimes$-functors
\[
\uH : f^{-1} \cong g^{-1} : \uLoc_k (Z) \longrightarrow \uLoc_k (X) \; .
\]
For $\Eh$ in $\uLoc_k (Z)$ the isomorphism
\[
\uH (\Eh) : f^{-1} (\Eh) \silo g^{-1} (\Eh)
\]
is obtained as follows. Consider the diagram
\[
\xymatrix{
X \times [0,1] \ar[r]^-{\pi} & X \ar@<1ex>[l]^-{s_1} \ar@<-2ex>[l]_-{s_0} 
}
\]
where $\pi$ is the projection and $s_0 (x) = (x,0) , s_1 (x) = (x,1)$. Setting $Y = X \times [0,1]$ and $s = s_0$ or $s_1$, by Example \ref{t53} we may apply Lemma \ref{t52} to the sheaf $\Fh = H^{-1} (\Eh)$. We obtain canonical isomorphisms
\[
f^{-1} (\Eh) = s^{-1}_0 \Fh \xleftarrow{\sim} \pi_* \Fh \silo s^{-1}_1 \Fh = g^{-1} (\Eh) \; .
\]
Using these we define the isomorphism $\uH (\Eh)$. Its fibre in a point $x$ is a functorial isomorphism
\[
\uH (\Eh)_x : \Eh_{f (x)} = (f^{-1} \Eh)_x \silo (g^{-1} \Eh)_x = \Eh_{g (x)} \; ,
\]
so that $\uH (\;)_x \in \Iso^{\otimes} (\omega_{f (x)} , \omega_{g (x)})$. By construction, we have $\uH (\;)_{\tx} = \omega_{\gamma}$ as defined above, where $\gamma : [0,1] \to Z$ is the path $\gamma (t) = H (\tx , t)$. For the commutativity of the diagram we have to show that $L_{\omega_{\gamma}} \circ f_* = R_{\omega_{\gamma}} \circ g_*$ i.e. that for any $k$-algebra $R$ and any
\[
\alpha \in \pi_k (X , \tx) (R) = \uAut^{\otimes} (\omega_{\tx , R})
\]
we have
\[
\omega_{\gamma} \circ f_* (\alpha) = g_* (\alpha) \circ \omega_{\gamma} : \omega_{f (\tx)} \longrightarrow \omega_{g (\tx)} \; .
\]
In other words, we claim that for all $\Eh$ in $\uLoc_k (Z)$:
\[
(\uH (\Eh)_{\tx} \otimes \id_R) \circ \alpha (f^{-1} (\Eh)) = \alpha (g^{-1} (\Eh)) \circ (\uH (\Eh)_{\tx} \otimes \id_R) \; .
\]
This holds since $\uH (\Eh) : f^{-1} (\Eh) \to g^{-1} (\Eh)$ is a morphism in $\uLoc_k (X)$ and since $\alpha$ is a natural transformation. \beweisende

As in \cite[section 2.2]{KS} we obtain the following consequences:

\begin{cor} \label{t54}
Let $H$ be a pointed homotopy of the pointed connected topological spaces $(X , \tx)$ and $(Z , \tz)$ and set $f = H (\; , 0)$ and $g = H (\; , 1)$. Then
\[
f_* = g_* : \pi_k (X , \tx) \longrightarrow \pi_k (Z , \tz) \; .
\]
\end{cor}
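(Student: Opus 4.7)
The plan is to read Corollary \ref{t54} as an immediate specialization of Proposition \ref{t51}. Since $H$ is a \emph{pointed} homotopy between the pointed maps $(f,\tx) \to (Z,\tz)$ and $(g,\tx) \to (Z,\tz)$, one has $H(\tx,t) = \tz$ for every $t \in [0,1]$. In particular $f(\tx) = g(\tx) = \tz$, so Proposition \ref{t51} already applies and gives a commutative triangle
\[
\xymatrix{
 & \pi_k(Z,\tz) \ar[dd]^{\wr \; \gamma_k}\\
\pi_k(X,\tx) \ar[ur]^{f_*} \ar[dr]_{g_*} & \\
 & \pi_k(Z,\tz)
}
\]
where $\gamma : [0,1] \to Z$ is the path $\gamma(t) = H(\tx,t)$.

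The content of the corollary is thus reduced to identifying $\gamma_k$ with the identity. Because the homotopy is pointed, $\gamma$ is the constant path at $\tz$. It therefore suffices to check that for a constant path $c_z$ at a point $z \in Z$ one has $\omega_{c_z} = \id_{\omega_z}$, since then $L_{\omega_{c_z}} = R_{\omega_{c_z}}$ and consequently $\gamma_k = R_{\omega_\gamma}^{-1} \circ L_{\omega_\gamma} = \id$.

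To verify the identification $\omega_{c_z} = \id$, I would spell out the definition: for $\Eh \in \uLoc_k(Z)$ the pullback $c_z^{-1}\Eh$ is the constant sheaf on $[0,1]$ with stalk $\Eh_z$, and the evaluation maps $\ev_0, \ev_1 : \Gamma([0,1], c_z^{-1}\Eh) \to \Eh_z$ coincide with the identification of global sections of a constant sheaf with its stalk. Hence $\omega_{c_z}(\Eh) = \ev_1 \circ \ev_0^{-1} = \id_{\Eh_z} = \id_{\omega_z(\Eh)}$, functorially in $\Eh$. Combining this with the triangle above yields $f_* = g_*$, which is the assertion of Corollary \ref{t54}. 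There is no real obstacle here; the only point that requires (minor) care is recording that the hypothesis ``pointed homotopy'' forces $\gamma$ to be the constant path, which is exactly what trivializes the inner automorphism produced by the non-pointed version in Proposition \ref{t51}.
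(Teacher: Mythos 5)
Your proposal is correct and follows the same route as the paper: specialize Proposition \ref{t51}, observe that the pointedness of $H$ makes $\gamma$ the constant path at $\tz$, check that $\omega_\gamma$ is then the identity natural transformation, and conclude $\gamma_k = \id$, hence $f_* = g_*$. The extra detail you supply on why $\omega_{c_z}=\id$ (the two evaluation maps both being the canonical identification of global sections of a constant sheaf with the stalk) is exactly the verification the paper leaves implicit.
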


\begin{proof}
For the path $\gamma$ in Proposition \ref{t51} we have $\gamma (t) = H (\tx , t) = \tz$ since $H$ is a pointed homotopy. Hence the corresponding isomorphism $\omega_{\gamma} : \Eh_{\tz} \to \Eh_{\tz}$ is the identity for all $\Eh$ in $\uLoc_k (Z)$ and therefore the map $\gamma_k$ in Proposition \ref{t51} is the identity as well.
\end{proof}

\begin{cor} \label{t55}
Let $(X, x) \overset{\xrightarrow{\;f\;}}{\underset{g}{\longleftarrow}} (Y,y)$ be pointed maps of pointed topological spaces such that $f \circ g$ and $g \circ f$ are homotopic to the identities via pointed homotopies. Then $X$ is connected if and only if $Y$ is connected. If this is the case, the functors $\uLoc_k (X) \overset{\xrightarrow{f^{-1}}}{\underset{g^{-1}}{\longleftarrow}} \uLoc_k (Y)$ are quasi-inverse equivalences of categories and $\pi_k (X,x) \overset{\xrightarrow{f_*}}{\underset{g_*}{\longleftarrow}} \pi_k (Y,y)$ are isomorphisms of $k$-groups which are inverse to each other.
\end{cor}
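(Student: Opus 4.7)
The plan is to prove the three assertions in sequence by combining Corollary~\ref{t54} with the functorial construction carried out inside the proof of Proposition~\ref{t51}. The heart of the corollary is already contained in those results; what remains is to organize the pieces and handle connectedness, which has to be addressed first so that the categories $\uLoc_k (X)$, $\uLoc_k (Y)$ are actually neutral Tannakian.

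For connectedness, the statement is symmetric in $X$ and $Y$, so assume $X$ is connected and, toward a contradiction, that $Y = A \sqcup B$ is a partition into two non-empty open subsets. Then $X = f^{-1} (A) \sqcup f^{-1} (B)$ is a partition of $X$ into two open subsets, so connectedness forces one of them to be empty; say $f (X) \subseteq A$. Choosing any $y' \in B$, the element $f (g (y')) \in A$ and the continuous path $t \mapsto H (y' , t)$ arising from the (pointed) homotopy $H : Y \times [0,1] \to Y$ between $\id_Y$ and $f \circ g$ joins $y' \in B$ to $f (g (y')) \in A$ inside $Y$. This contradicts $Y = A \sqcup B$ being a decomposition into two disjoint open subsets.

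Assuming from now on that both spaces are connected, the categorical statement follows directly from the construction in the proof of Proposition~\ref{t51}. The pointed homotopy between $\id_Y$ and $f \circ g$ produces an isomorphism of $\otimes$-functors
\[
\uH : \id_{\uLoc_k (Y)} = (\id_Y)^{-1} \silo (f \circ g)^{-1} = g^{-1} \circ f^{-1} ,
\]
and symmetrically an isomorphism $\id_{\uLoc_k (X)} \cong f^{-1} \circ g^{-1}$. Hence $f^{-1}$ and $g^{-1}$ are quasi-inverse equivalences of categories.

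Finally, the $k$-group statement is a direct application of Corollary~\ref{t54} to the two pointed homotopies: they give $(f \circ g)_{\ast} = (\id_Y)_{\ast} = \id_{\pi_k (Y,y)}$ and $(g \circ f)_{\ast} = (\id_X)_{\ast} = \id_{\pi_k (X,x)}$. Functoriality of $(\,)_{\ast}$ turns these equalities into $f_{\ast} \circ g_{\ast} = \id_{\pi_k (Y,y)}$ and $g_{\ast} \circ f_{\ast} = \id_{\pi_k (X,x)}$, so $f_{\ast}$ and $g_{\ast}$ are mutually inverse isomorphisms of $k$-groups. The only step that requires any real thought is the connectedness argument, since the spaces involved are arbitrary; once that is in place the remaining two assertions are essentially formal consequences of Proposition~\ref{t51} and Corollary~\ref{t54}.
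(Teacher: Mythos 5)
Your proposal is correct and follows essentially the same route as the paper: the equivalence of categories and the inverse isomorphisms of $k$-groups are obtained exactly as in the paper, as formal consequences of the tensor-functor isomorphism $\uH$ from Proposition~\ref{t51} and of Corollary~\ref{t54} together with functoriality of $(\,)^{-1}$ and $(\,)_*$. The only difference is that the paper simply cites \cite[Corollary~2.18]{KS} for the connectedness assertion, whereas you give a short self-contained argument (the image of the path $t\mapsto H(y',t)$ is connected and would meet both pieces of a separation of $Y$), which is correct.
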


\begin{proof}
The assertion on connectedness is proved in the corresponding corollary \cite[Corollary~2.18]{KS}. The rest is a formal consequence of Corollary \ref{t54}.
\end{proof}

Consider a field $F$ of characteristic zero containing all roots of unity in an algebraic closure $\oF$ of $F$ and fix an injective homomorphism $\iota : \mu (F) = \mu (\oF) \hookrightarrow \C^{\times}$. Let $W_{\rat} (F)$ denote the ring of rational Witt vectors of $F$, c.f. \cite[section 4.1]{KS}. The Teichm\"uller map $[\;] : F^{\times} \to W_{\rat} (F)$ is multiplicative and we obtain ring homomorphisms
\[
\ZZ [\mu (F)] \xrightarrow{[\;]} W_{\rat} (F) \quad \text{and} \quad \ZZ [\mu (F)] \xrightarrow{\iota} \C \; ,
\]
defined on the group ring of $\mu (F)$. Set
\[
\eX_F = \spec (W_{\rat} (F) \otimes_{\ZZ [\mu (F)]} \C ) \; .
\]
The complex points of this scheme over $\spec \C$ carry the ``complex topology'' induced by the topology of $\C$, c.f. \cite[section 5.2]{KS}. By definition, we have
\[
\eX_F (\C) = \{ \alpha \in \Hom (W_{\rat} (F) , \C) \mid \alpha \circ [\;] = \iota \} \; .
\]
The topological space $\eX_F (\C)$ is connected and it has the following Galois-theoretic description. Let $G_F = \Aut_F (\oF)$ be the absolute Galois group of $F$. The Teichm\"uller map $\oF^{\times} \to W_{\rat} (\oF)$ induces an isomorphism $\ZZ \oF^{\times} \silo W_{\rat} (\oF)$ and hence an isomorphism
\[
\ZZ [\oF^{\times}]^G \silo W_{\rat} (\oF)^G = W_{\rat} (F) \; .
\]
This implies that
\[
\eX_{\oF} (\C) = \{ \chi : \oF^{\times} \longrightarrow \C^{\times} \mid \chi \, |_{\mu (\oF)} = \iota \} \; .
\]
By \cite[Lemma 4.9]{KS}, the natural map
\[
\Hom (\ZZ [\oF^{\times}] , \C) / G \silo \Hom (\ZZ [\oF^{\times}]^G , \C)
\]
is a bijection. This gives a bijection
\[
\eX_F (\C) = \{ \chi : \oF^{\times} \to \C^{\times} \mid \chi \, |_{\mu (\oF)} = \iota \} / G \; .
\]
Note that since $\mu (\oF) = \mu (F)$, the Galois group $G$ fixes $\iota$. This bijection is actually a homeomorphism if the set of characters is given the topology of pointwise convergence and its quotient by $G$ the quotient topology. Kucharczyk and Scholze prove that the categories of finite \'etale coverings of $\eX_F$ and $\eX_F (\C)$ are naturally equivalent and that their isomorphic \'etale fundamental groups with respect to a $\C$-valued point $x$ can be canonically identified with the absolute Galois group of $F$ if the latter is defined using a certain fibre functor defined by $x$. We will not consider the scheme $\eX_F$ in the following but only the connected topological space $\eX_F (\C)$ with the complex topology. Let $(\oF^{\times})^{\vee} = \Hom (\oF^{\times} , S^1)$ be the Pontrjagin dual of the discrete group $\oF^{\times}$ and consider the closed subspace
\[
X_{\oF} = \{ \chi \in (\oF^{\times})^{\vee} \mid \chi \, |_{\mu (F)} = \iota \} \; .
\]
The group $G_F$ acts by homeomorphisms and we endow the quotient $X_F = G \setminus X_{\oF}$ with the quotient topology. By \cite[Proposition 5.1]{KS}, the $G_F$ action on $X_{\oF}$ is proper and free and the space $X_F$ is a non-empty, connected, compact Hausdorff-space. By \cite[Theorem~5.7,~ii)]{KS} the natural $G_F$-equivariant projection $\eX_{\oF} (\C) \to X_{\oF} , \chi \mapsto \chi / |\chi|$ and the induced projection $\eX_F (\C) \to X_F$ are (strong) deformation retractions. Here we view $X_{\oF}$ as a closed subspace of $\eX_{\oF} (\C)$ via $X_{\oF} = X_{\oF} \times \{ 0 \}$ and similarly for $X_F$. From Corollary \ref{t55} we thus obtain:

\begin{cor} \label{t56}
For any point $x \in X_F \subset \eX_F (\C)$ the above map $\eX_F (\C) \to X_F$ induces an isomorphism of $k$-groups
\[
\pi_k (\eX_F (\C) , x) \silo \pi_k (X_F , x) \; .
\]
\end{cor}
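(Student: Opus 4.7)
The plan is to deduce the corollary directly from Corollary \ref{t55} applied to the pair of pointed maps arising from the deformation retraction $\eX_F(\C) \to X_F$.

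First, I would unpack what the cited result \cite[Theorem~5.7~ii)]{KS} gives. The projection $r : \eX_F(\C) \to X_F$ defined by $\chi \mapsto \chi/|\chi|$, together with the closed inclusion $i : X_F \hookrightarrow \eX_F(\C)$ arising from the identification $X_F = X_F \times \{0\}$, satisfies $r \circ i = \id_{X_F}$. Because we have a \emph{strong} deformation retraction, there is a continuous homotopy $H : \eX_F(\C) \times [0,1] \to \eX_F(\C)$ with $H(\,\cdot\,,0) = \id_{\eX_F(\C)}$, $H(\,\cdot\,,1) = i \circ r$, and $H(y,t) = y$ for every $y \in X_F$ and every $t \in [0,1]$.

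Next, since the chosen basepoint $x$ lies in $X_F$, the property $H(x,t) = x$ for all $t$ tells us that $H$ is a \emph{pointed} homotopy between $\id_{\eX_F(\C)}$ and $i \circ r$ with basepoint $x$. The identity $r \circ i = \id_{X_F}$ is of course a pointed homotopy (the constant one). Thus the pointed maps
\[
(\eX_F(\C),x) \xrightarrow{\ r\ } (X_F,x) \xrightarrow{\ i\ } (\eX_F(\C),x)
\]
satisfy the hypotheses of Corollary \ref{t55}.

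Applying Corollary \ref{t55}, we conclude that $r_* : \pi_k(\eX_F(\C),x) \to \pi_k(X_F,x)$ and $i_* : \pi_k(X_F,x) \to \pi_k(\eX_F(\C),x)$ are mutually inverse isomorphisms of $k$-groups; in particular, $r_*$ is the asserted isomorphism. There is essentially no obstacle here: the only thing to verify is that the deformation retraction supplied by Kucharczyk--Scholze is strong (so that the homotopy fixes $x$), which is explicitly part of their statement, and that the homotopy invariance developed earlier in this section (Corollaries \ref{t54} and \ref{t55}) applies to arbitrary connected topological spaces without further hypotheses --- which is exactly how those corollaries are formulated.
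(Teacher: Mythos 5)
Your proposal is correct and follows exactly the route the paper intends: the paper derives Corollary \ref{t56} directly from Corollary \ref{t55} applied to the strong deformation retraction $\eX_F(\C)\to X_F$ of \cite[Theorem~5.7~ii)]{KS}, which is a pointed homotopy equivalence since the basepoint $x$ lies in $X_F$ and is fixed throughout. Your write-up merely makes explicit the details the paper leaves implicit.
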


It is sufficient therefore to determine $\pi_k (X_F , x)$. For a finite extension $F' / F$ in $\oF$ the natural map $X_{F'} \to X_F$ is a finite \'etale covering of compact connected Hausdorff spaces, c.f. \cite[Theorem 5.2]{KS}. Let $\ox \in X_{\oF} \subset \eX_{\oF} (\C)$ be a point and let $x_{F'}$ be its image in $X_{F'}$. By \cite[Theorem 3.6]{D} we have a natural isomorphism
\[ 
\pi_k (X_F , x)^0 = \varprojlim_{(Y,y)} \pi_k (Y, y)
\]
where $(Y,y)$ runs over the directed set of isomorphism classes of pointed finite \'etale coverings of $(X_F , x)$. According to \cite[Theorem 5.2]{KS}, the above coverings $(X_{F'} , x_{F'}) \to (X_F , x)$ form a cofinal system, so that we obain an isomorphism
\begin{equation}
\label{eq:5-9}
\pi_k (X_F , x)^0 = \varprojlim_{F' / F} \pi_k (X_{F'} , x_{F'}) \; .
\end{equation}
Since we are dealing with compact Hausdorff spaces, the natural map
\[
X_{\oF} \longrightarrow \varprojlim_{F' / F} X_{\oF} / \gal (\oF / F') = \varprojlim_{F' / F} X_{F'}
\]
is a homeomorphism. Applying \cite[Theorem 4.3]{D} we get an isomorphism
\begin{equation}
\label{eq:5-10}
\pi_k (X_{\oF} , \ox) = \varprojlim_{F' / F} \pi_k (X_{F'} , x_{F'}) \; .
\end{equation}

Combining \eqref{eq:5-9} and \eqref{eq:5-10} with \cite[Theorem 3.6]{D} and the \cite[Theorem 5.2]{KS} identification of $\pi_1 (X_F , x)$ with $G_F$, we obtain the following result:

\begin{cor} \label{t57}
For any point $\ox \in X_{\oF}$ mapping to $x \in X_F$ we have a canonical short exact sequence of affine $k$-groups
\[
1 \longrightarrow \pi_k (X_{\oF} , \ox) \longrightarrow \pi_k (X_F , x) \longrightarrow (G_F)_{/ k} \longrightarrow 1 \; .
\]
There are canonical isomorphisms
\[
\pi_k (X_{\oF} , \ox) = \pi_k (X_F , x)^0 \quad \text{and} \quad \pi_k (X_F , x)^{\et} = \pi^{\et}_1 (X_F , x)_{/ k} = (G_F)_{/ k} \; .
\]
\end{cor}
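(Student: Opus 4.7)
The proof assembles three already-established ingredients: the short exact sequence $1\to G^0\to G\to G^{\et}\to 1$ attached to any affine group scheme $G$ over $k$, the identification of $\pi_k(-)^{\et}$ with $\pi_1^{\et}(-)_{/k}$ from \cite[Theorem~3.6]{D}, and the Kucharczyk--Scholze computation $\pi_1^{\et}(X_F,x)=G_F$ from \cite[Theorem~5.2]{KS}, together with the inverse-limit identities \eqref{eq:5-9} and \eqref{eq:5-10} proved just above.

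First I would compare \eqref{eq:5-9} and \eqref{eq:5-10} directly. Both express a $k$-group as the projective limit, over finite extensions $F'/F$ inside $\oF$, of $\pi_k(X_{F'},x_{F'})$; the transition morphisms agree because they are induced by the same finite étale coverings $X_{F''}\to X_{F'}$. Moreover, the projection $X_{\oF}\to X_F$ factors canonically through each $X_{F'}$, and the induced morphism $\pi_k(X_{\oF},\ox)\to \pi_k(X_F,x)$ lands in $\pi_k(X_F,x)^0$ by \eqref{eq:5-9}. Comparing the two presentations gives a canonical isomorphism
\[
\pi_k(X_{\oF},\ox)\silo \pi_k(X_F,x)^0.
\]

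Next I would handle the quotient. By \cite[Theorem~3.6]{D}, applied to the connected space $X_F$, there is a natural isomorphism $\pi_k(X_F,x)^{\et}\silo \pi_1^{\et}(X_F,x)_{/k}$, and by \cite[Theorem~5.2]{KS} we have $\pi_1^{\et}(X_F,x)=G_F$. Putting these together yields the stated identification $\pi_k(X_F,x)^{\et}=(G_F)_{/k}$.

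Finally, for any affine $k$-group $G$ the connected component $G^0$ sits in a canonical short exact sequence $1\to G^0\to G\to \pi_0(G)=G^{\et}\to 1$. Applying this to $G=\pi_k(X_F,x)$ and substituting the two identifications just established produces the required short exact sequence
\[
1\longrightarrow \pi_k(X_{\oF},\ox)\longrightarrow \pi_k(X_F,x)\longrightarrow (G_F)_{/k}\longrightarrow 1.
\]
There is essentially no obstacle here beyond a careful verification that the isomorphism constructed via \eqref{eq:5-9}--\eqref{eq:5-10} genuinely coincides, as a subgroup of $\pi_k(X_F,x)$, with the identity component; this amounts to checking that the two towers of $k$-groups indexed by $F'/F$ are identified compatibly with the natural morphisms into $\pi_k(X_F,x)$, which is immediate from functoriality of $\pi_k$ applied to the commutative square $X_{\oF}\to X_{F'}\to X_F$.
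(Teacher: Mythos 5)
Your argument is exactly the paper's: the authors obtain the corollary by combining \eqref{eq:5-9} and \eqref{eq:5-10} (comparing the two inverse limits over finite extensions $F'/F$ to identify $\pi_k(X_{\oF},\ox)$ with $\pi_k(X_F,x)^0$) with \cite[Theorem~3.6]{D} for $\pi_k(X_F,x)^{\et}\simeq\pi_1^{\et}(X_F,x)_{/k}$ and the Kucharczyk--Scholze identification of $\pi_1^{\et}(X_F,x)$ with $G_F$, the short exact sequence then being the canonical one $1\to G^0\to G\to G^{\et}\to 1$. Your write-up correctly fills in the compatibility of the two towers that the paper leaves implicit; there is nothing to add.
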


Here, for a profinite group $G = \varprojlim_i G_i$ with $G_i$ finite, $G_{/ k} = \varprojlim_i (G_{i/ k})$ denotes the pro-\'etale $k$-group obtained as the limit of the finite constant $k$-groups $G_{i / k}$ attached to the $G_i$'s. The global sections $\Gamma (G_{/ k} , \oo)$ of $G$ consists of the continuous maps $G \to k$ where $k$ carries the discrete topology. Note that for the $k$-rational points, we have
\[
\pi_k (X_F , x)^{\et} (k) = \pi^{\et}_1 (X_F , x) = G_F \; .
\]
Next we show that the connected $k$-group $\pi_k (X_{\oF} , \ox)$ is abelian. The inclusion $\mu_F \hookrightarrow \oF^{\times}$ induces a short exact sequence of compact abelian topological group
\[
1 \longrightarrow (\oF^{\times} / \mu_F)^{\vee} \longrightarrow (\oF^{\times})^{\vee} \xrightarrow{\pi} \mu^{\vee}_F \longrightarrow 1 \; .
\]
Here $\pi$ is the map which restricts a character $\chi : \oF^{\times} \to S^1$ to $\mu_F$. All fibres of $\pi$ are homeomorphic and in particular $X_{\oF}$, the fibre of our fixed embedding $\iota : \mu_F \hookrightarrow S^1$ is homeomorphic to $V^{\vee}$, the Pontrjagin dual of the $\QQ$-vectorspace $V = \oF^{\times} / \mu_F$. Write $V = \varinjlim_{\Gamma} \Gamma$ as a filtered union of finite free $\ZZ$-submodules. Then $V^{\vee} = \varprojlim_T T$ where $T = \Gamma^{\vee} \cong (S^1)^{\rk \Gamma}$ is a torus. Let $y \in V^{\vee}$ be the image of $\ox$ under a homeomorphism $X_{\oF} \silo V^{\vee}$ and let $y_T$ be the image of $y$ in $T$. Using \cite[Theorems 4.1 and 4.3]{D} we obtain isomorphisms of $k$-groups
\[
\pi_k (X_{\oF} , \ox) \cong \pi_k (V^{\vee}, y) = \varprojlim_T \pi_k (T , y_T) = \varprojlim_T \pi_1 (T , y_T)^{\alg} \; .
\]
Since $\pi_1 (T , y_T)$ is commutative, it follows that $\pi_k (X_{\oF}, \ox)$ is commutative as well. If $k$ is perfect, then it follows from \cite[Theorem 16.13]{Milne} that there is a canonical isomorphism of $k$ groups
\begin{equation}
\label{eq:5-11}
\pi_k (X_{\oF} , \ox) \silo \pi^u_k (X_{\oF} , \ox) \times \pi^{\red}_k (X_{\oF} , \ox) \; .
\end{equation}
Here $\pi^u_k (X_{\oF}, \ox)$ is the pro-unipotent radical of $\pi_k (X_{\oF}, \ox)$ and $\pi^{\red}_k (X_{\oF} , \ox)$, the maximal pro-reductive quotient of $\pi_k (X_{\oF} , \ox)$ can be identified with the greatest $k$-subgroup of $\pi_k (X_{\oF} , \ox)$ of multiplicative type. Both $\pi^u_k (X_{\oF} , \ox)$ and $\pi^{\red}_k (X_{\oF} , \ox)$ are commutative and connected. The cohomologies of $X_{\oF}$ with coefficients in $\ZZ / n$ and $\QQ$ have been calculated in \cite{KS}. We recall the argument for $H^1$ and coefficients in any abelian group $A$. 

Recall the pro-torus $V^{\vee} = \varprojlim_T T$ above where $V = \oF^{\times} / \mu_F$. We have
\[
H^1 (V^{\vee} , \uA) = \varinjlim_T H^1 (T , \uA) \; .
\]
It follows that translation by an element of $V^{\vee}$ induces the identity on $H^1 (V^{\vee} , \uA)$ since this is well known for the finite dimensional tori $T$. Hence the homeomorphisms $V^{\vee} \silo X_{\oF} , \alpha \mapsto \alpha \chi$ for $\chi \in X_{\oF}$ all induce the same isomorphism, which we will view as an identification
\begin{equation}
\label{eq:5-12}
H^1 (X_{\oF} , \uA) = H^1 (V^{\vee} , \uA) \; .
\end{equation}
We have 
\begin{align*}
H^1 (T , \uA) & = H^1 (\Gamma^{\vee} , \uA) = \Hom (\pi_1 (\Gamma^{\vee}) , A) \\
& = \Hom (\Hom (\Gamma , \ZZ) , A) = \Gamma \otimes A 
\end{align*}
and hence
\begin{equation}
 \label{eq:5-13}
 H^1 (X_{\oF}, \uA) = \varinjlim_T H^1 (T , \uA) = \oF^{\times} / \mu_F \otimes A \; .
\end{equation}
The category of commutative unipotent groups $U$ over $k \supset \QQ$ is anti-equivalent to the category of $k$-vector spaces by sending $U$ to $Y (U) = \Hom_k (U , \GG_a)$. Given $Y (U)$ we recover $U$ as
\[
U = \Hom_k (Y (U) , \GG_{a/k}) \; ,
\]
where by definition for all $k$-schemes $S$ we have
\[
\Hom_k (Y (U) , \GG_{a/k}) (S) := \Hom_k (Y (U) , \Gamma (S , \oo)) \; .
\]
Here, on the right $\Hom_k$ means $k$-linear maps. Again using \cite[Proposition 4.4]{D}, we obtain
\begin{equation}
\label{eq:5-14}
Y ( \pi^u_k (X_{\oF} , \ox)) = \Hom_k (\pi_k (X_{\oF} , \ox) , \GG_{a/k}) = H^1 (X_{\oF} , \uk) \; ,
\end{equation}
and then
\begin{equation}
\label{eq:5-15}
\pi^u_k (X_{\oF} , \ox) = \Hom_k (H^1 (X_{\oF} , \uk) , \GG_{a/k}) \; .
\end{equation}
We now turn to the multiplicative part. Since we want to apply \cite[Proposition 4.4]{D} which gives a formula for $\Hom_k (\pi_k (X_{\oF} , \ox) , \GG_m)$, but not for $\Hom_{k^{\sep}} (\pi_k (X_{\oF} , \ox) \otimes_k k^{\sep} , \GG_m)$, we restrict our discussion to the case where $k = \ok$. Then the category of multiplicative groups over $k$ is equivalent to the category of abelian groups by sending $H$ to $X^* (H) = \Hom_k (H , \GG_m)$. The group $H$ is recovered from $X^* (H)$ as 
\[
H = \Hom (X^* (H) , \GG_{m/k}) \; .
\]
Here the right hand side is defined by its $S$-valued points, which are the groups
\[
\Hom (X^* (H) , \GG_{m/k}) (S) := \Hom (X^* (H) , \Gamma (S , \oo^{\times})) \; .
\]
Using \cite[Proposition 4.4]{D}, we find
\[
X^* (\pi^{\red}_k (X_{\oF} , \ox)) = \Hom_k (\pi_k (X_{\oF} , \ox) , \GG_{m/k}) = H^1 (X_{\oF} , \uk^{\times}) \; 
\]
and therefore
\begin{equation}
\label{eq:5-16}
\pi^{\red}_k (X_{\oF} , \ox) = \Hom (H^1 (X_{\oF} , \uk^{\times}) , \GG_{m/k}) \; .
\end{equation}
Combining \eqref{eq:5-14}, \eqref{eq:5-15} with \eqref{eq:5-16} we get the following

\begin{cor}
\label{t58}
In the situation of Corollary \ref{t57} we have canonical isomorphisms of commutative $k$-groups:
\[
\pi_k (X_{\oF} , \ox) \silo \pi^u_k (X_{\oF} , \ox) \times \pi^{\red}_k (X_{\oF} , \ox) \; , \quad \text{if $k$ is perfect}
\]
with
\[
\pi^u_k (X_{\oF} , \ox) = \Hom_k (\oF^{\times} / \mu_F \otimes_{\QQ} k , \GG_{a / k})\; , \quad \text{if} \; k \supset \QQ 
\]
and
\[
\pi^{\red}_k (X_{\oF} , \ox) = \Hom (\oF^{\times} / \mu_F \otimes_{\QQ} k^{\times} / \mu_k , \GG_{m/k}) \; , \quad \text{if} \; k = \ok \supset \QQ \; .
\]
\end{cor}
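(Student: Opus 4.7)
The plan is to assemble three ingredients that have been prepared above: the commutativity of $\pi_k (X_{\oF}, \ox)$, Milne's structure theorem for commutative affine group schemes over a perfect field, and the cohomology computation \eqref{eq:5-13}. Commutativity was established via the identification $X_{\oF} \cong V^\vee = \varprojlim_T T$ with $V = \oF^\times / \mu_F$, combined with Theorems~4.1 and 4.3 of \cite{D}, which realise $\pi_k (X_{\oF}, \ox)$ as a cofiltered limit of the commutative groups $\pi_1 (T, y_T)^{\alg}$. For $k$ perfect, Milne's Theorem~16.13 then yields the canonical splitting \eqref{eq:5-11} into the pro-unipotent radical and the maximal multiplicative quotient; both factors are automatically connected and commutative.

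For the unipotent factor, assuming $k \supset \QQ$, I would apply the anti-equivalence $U \mapsto Y(U) = \Hom_k (U, \GG_a)$ between commutative unipotent $k$-groups and $k$-vector spaces, together with \cite[Proposition 4.4]{D}, to obtain
\[
Y (\pi^u_k (X_{\oF}, \ox)) = \Hom_k (\pi_k (X_{\oF}, \ox), \GG_a) = H^1 (X_{\oF}, \uk).
\]
Plugging $A = k$ into \eqref{eq:5-13} gives $\oF^\times / \mu_F \otimes_{\ZZ} k = \oF^\times / \mu_F \otimes_{\QQ} k$, where the second equality uses that $\oF^\times / \mu_F$ is a $\QQ$-vector space (divisible since $\oF$ is algebraically closed, torsion-free by the definition of $\mu_F$). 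Inverting the anti-equivalence yields the formula for $\pi^u_k$.

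For the reductive factor, assuming in addition $k = \ok$, the parallel equivalence $H \mapsto X^*(H) = \Hom_k (H, \GG_m)$ between $k$-groups of multiplicative type and abelian groups, combined again with \cite[Proposition 4.4]{D}, gives
\[
X^* (\pi^{\red}_k (X_{\oF}, \ox)) = H^1 (X_{\oF}, \uk^\times) = \oF^\times / \mu_F \otimes_{\ZZ} k^\times,
\]
the last equality by \eqref{eq:5-13}. Using the $\QQ$-vector space structure on $\oF^\times / \mu_F$, this rewrites as $\oF^\times / \mu_F \otimes_{\QQ} (k^\times \otimes_{\ZZ} \QQ)$; and since $k^\times$ is divisible and has torsion subgroup $\mu_k$ when $k = \ok$ of characteristic zero, one has $k^\times \otimes_{\ZZ} \QQ = k^\times / \mu_k$. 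Inverting the equivalence produces the stated formula.

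The only subtle input is the cohomology identification \eqref{eq:5-13}, in particular the assertion that translation by elements of $V^\vee$ acts trivially on $H^1 (V^\vee, \uA)$, so that the isomorphism is canonical (independent of the choice of basepoint $\chi \in X_{\oF}$). This reduces via the direct limit $H^1 (V^\vee, \uA) = \varinjlim_T H^1 (T, \uA)$ to the standard analogous fact for the finite-dimensional tori $T$. Aside from this point, the entire argument is formal once commutativity and the anti-/equivalences of categories are in place.
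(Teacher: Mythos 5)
Your proposal is correct and follows essentially the same route as the paper: commutativity via $X_{\oF}\cong V^\vee=\varprojlim_T T$ and \cite[Theorems 4.1 and 4.3]{D}, the splitting from \cite[Theorem 16.13]{Milne}, and then \cite[Proposition 4.4]{D} together with \eqref{eq:5-13} fed through the (anti-)equivalences for commutative unipotent and multiplicative groups. The extra justifications you supply (that $\oF^\times/\mu_F$ is a $\QQ$-vector space and that $k^\times\otimes_{\ZZ}\QQ = k^\times/\mu_k$ for $k=\ok$ of characteristic zero) are exactly the identifications the paper makes implicitly and in the remark following the corollary.
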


\begin{rem}
	 Since the character group $\oF^{\times} / \mu_F \otimes_{\ZZ} k^{\times} = \oF^{\times} / \mu_F \otimes_{\QQ} k^{\times} / \mu_F$ of $\pi^{\red}_k (X_{\oF} , \ox)$ is a $\QQ$-vector space, it has no torsion and hence $\pi^{\red}_k (X_{\oF} , \ox)$ is connected. The unipotent part $\pi^u_k (X_{\oF} , \ox)$ being connected, it follows that $\pi_k (X_{\oF} , \ox)$ is connected. We know this of course, since $\pi_k (X_{\oF} , \ox) = \pi_k (X_F , x)^0$ by Corollary \ref{t57}.
\end{rem}	
\begin{rem}
 Let $G \to G''$ be a surjective i.e. faithfully flat map of affine $k$-groups. Equivalently, the corresponding map of Hopf-algebras $A'' \to A$ is injective. For algebraically closed fields $k$ the induced map $G (k) \to G'' (k)$ is surjective, c.f. \cite[III, \S 3, Collaire 7.6]{DemazureGabriel:GroupesAlgebriques}. 
  In particular, the map $G (k) \to \pi_0 (G) (k)$ is surjective.
\end{rem}

It follows from Corollary \ref{t57} that for $k = \ok$ we have a short exact sequence of pro-discrete groups
\begin{equation}
 \label{eq:5-17}
 1 \longrightarrow \pi^u_k (X_{\oF} , \ox) (k) \times \pi^{\red}_k (X_{\oF} , \ox) (k) \longrightarrow \pi_k (X_F , x) (k) \longrightarrow G_F \longrightarrow 1 \; .
\end{equation}
If additionally $\car k = 0$, then using Corollary \ref{t58}, we get
\[
\pi^u_k (X_{\oF} , \ox) (k) = \Hom_k (\oF^{\times} / \mu_F \otimes_{\QQ} k , k) = \Hom_{\QQ} (\oF^{\times} / \mu_F , k) 
\]
and
\[
\pi^{\red}_k (X_{\oF} , \ox) (k) = \Hom (\oF^{\times} / \mu_F \otimes_{\QQ} k^{\times} / \mu_F , k^{\times}) \; .
\]
Going through the constructions, one sees that the $G_F$-action on these groups via \eqref{eq:5-17} is the same as the one induced by the natural $G_F$-action on $\oF^{\times} / \mu_F$. The actions are continuous for the pro-discrete topologies on $\pi^u_k (X_{\oF} , \ox) (k)$ and $\pi^{\red}_k (X_{\oF} , \ox) (k)$ but not for the discrete topologies (the $G_F$-orbits do not have to be finite). Thus, although these groups are $\QQ$-vector spaces, it is not clear whether the extension \eqref{eq:5-17} is trivial i.e. a semi-direct product or not. The corresponding question is open for the extension of affine groups in Corollary \ref{t57} as well.

\medskip

The \'etale fundamental group of $X_F$ is isomorphic to the Galois group of $F$. We now discuss the question, whether the proalgebraic fundamental group $\pi_k (X_F , x)$ is related to the motivic Galois group $G_{\Mh_F (k)}$ if $k \supset \QQ$. We assume that our embedding $\iota : \mu_F \hookrightarrow \C$ is the restriction of an embedding $\iota : \oF \hookrightarrow \C$, which then also provides a point $\ox$ of $X_{\oF}$. Let $\Mh_F (k) , \Mh_{\oF} (k)$ be the Tannakian categories of mixed motives over $F$ with coefficients in $k$ neutralized by the Betti-realization via $\iota$. Thus e.g. for $M = H^w (X)$, with $X$ a variety over $\oF$ the fibre functor sends $M$ to $H^w_B (X \otimes_{\oF , \iota} \C , \uk)$. We can think of motives in terms of realizations as in \cite{Jannsen}. By definition, $G_{\Mh_F (k)}$ resp. $G_{\Mh_{\oF} (k)}$ are the Tannakian duals of $\Mh_F (k)$ resp. $\Mh_{\oF} (k)$ with respect to this fibre functor. There is a canonical short exact sequence of $k$-groups
\[
1 \longrightarrow G^0_{\Mh_F (k)} \longrightarrow G_{\Mh_F (k)} \longrightarrow G_{F / k} \longrightarrow 1 \; ,
\]
where $G_{\Mh_{\oF} (k)} = G^0_{\Mh_F (k)}$ canonically. As explained in the introduction of \cite{KS} there should be a ``better'' space $\tX_F$ mapping to $X_F$ and we would expect $\pi_k (\tX_F , \tx)$ to be closely related to $G_{\Mh_F (k)}$. This suggests a map $G_{\Mh_F (k)} \longrightarrow \pi_k (X_F, x)$ of $k$-groups and hence a map
\[
G_{\Mh_{\oF} (k)} = G^0_{\Mh_F (k)} \longrightarrow \pi_k (X_F , x)^0 = \pi_k (X_{\oF} , \ox) \; .
\]
It would map unipotent radicals to unipotent radicals and induce map between the maximal pro-reductive quotients. Since $\pi_k (X_{\oF} , \ox)$ is commutative, we would therefore get maps
\begin{equation}
\label{eq:5-18}
(G^u_{\Mh_{\oF} (k)})^{\ab} \longrightarrow \pi^u_k (X_{\oF} , \ox) 
\end{equation}
and
\begin{equation}
\label{eq:5-19}
(G^{\red}_{\Mh_{\oF} (k)})^{\ab} \longrightarrow \pi^{\red}_k (X_{\oF} , \ox) \; .
\end{equation}
We will now construct a canonical non-trivial map \eqref{eq:5-18} for any field $k \supset \QQ$. To do so is equivalent to giving a map of $k$-vector spaces
\begin{equation}
\label{eq:5-20}
Y (\pi^u_k (X_{\oF} , \ox)) \longrightarrow Y ((G^u_{\Mh_{\oF} (k)} )^{\ab}) = \Hom (G^u_{\Mh_{\oF} (k)} , \GG_a) \; .
\end{equation}
First note that by \eqref{eq:5-14} and \eqref{eq:5-13} we have
\begin{equation}
\label{eq:5-21} Y (\pi^u_k (X_{\oF} , \ox)) = \oF^{\times} / \mu_F \otimes k \; .
\end{equation}
Next, any $a \in \oF^{\times}$ defines a motive $E_a = H^1 (\GG_m , \{ 1,a \}) (1)$ over $\oF$. It can also be described as $E_a = H^1 (X_a) (1)$, where $X_a$ is the curve obtained from $\GG_{m, \oF}$ by identifying $1$ and $a$. In Deligne's theory of $1$-motives, we have $E_a = [ \ZZ \xrightarrow{\phi} \GG_m]$, where $\phi (1) = a$, c.f. \cite[\S\,10]{Deligne}. The motive $E_a$ fits into a canonical extension
\[
0 \longrightarrow \QQ (0) \longrightarrow E_a \longrightarrow \QQ (1) \longrightarrow 0 \quad \text{in} \; \Mh_{\oF} (\QQ) \; .
\]
Extending coefficients to $k$ we get an injective homomorphism
\begin{equation}
\label{eq:5-22}
\oF^{\times} / \mu \otimes k \hookrightarrow \Ext^1_{\Mh_{\oF} (k)} (k (0) , k (1)) \; , \; (a \; \operatorname{mod} \; \mu) \otimes 1 \mapsto E_a \otimes k \; .
\end{equation}
The motives $k (0)$, resp. $k (1)$ correspond to the trivial representation of $G_{\Mh_{\oF}} (k)$ on $\GG_a$ over $k$ resp. to the representation denoted $\GG_a (1)$ on $\GG_a$ by the character $\chi : G_{\Mh_{\oF}} (k) \to \GG_{m /k}$, dual to the full embedding of $\langle k (1) \rangle^{\otimes}$ into $\Mh_{\oF} (k)$. Then we have
\begin{equation}
\label{eq:5-23}
\Ext^1_{\Mh_{\oF} (k)} (k (0) , k (1)) = \Ext^1_{G_{\Mh_{\oF}} (k)} (\GG_a , \GG_a (1)) = H^1 (G_{\Mh_{\oF} (k)} , \GG_a (1)) \; .
\end{equation}
In the middle, extensions are taken within the category of finite dimensional representations of $G_{\Mh_{\oF} (k)}$ on $k$-vector spaces. For the Hochschild cohomology on the right compare \cite[Ch. 15]{Milne}. Consider the canonical extension of $k$-groups
\[
1 \longrightarrow G^u_{\Mh_{\oF} (k)} \longrightarrow G_{\Mh_{\oF} (k)} \longrightarrow Q \longrightarrow 1 \; .
\]
Here $Q = G^{\red}_{\Mh_{\oF} (k)}$ can be interpreted as the Tannakian dual of the category of (direct sums of) pure motives over $\oF$ with coefficients in $k$. We now follow the argument in \cite[Appendix~C4]{Jannsen}. Since Hochschild cohomology of reductive groups over fields of characteristic zero vanishes in positive degrees, the Hochschild-Serre spectral sequence
\[
H^i (Q , H^j (G^u_{\Mh_{\oF} (k)} , \GG_a (1)) \Rightarrow H^{i+j} (G_{\Mh_{\oF} (k)} , \GG_a (1))
\]
degenerates into isomorphisms
\[
H^j (G^u_{\Mh_{\oF} (k)} , \GG_a (1))^Q = H^j (G_{\Mh_{\oF} (k)} , \GG_a (1)) \; .
\]
In particular, using \eqref{eq:5-23} we may identify:
\begin{equation}
\label{eq:5-24}
\Ext^1_{\Mh_{\oF} (k)} (k (0) , k (1)) = H^1 (G^u_{\Mh_{\oF} (k)} , \GG_a (1))^Q \; .
\end{equation}
The character $\chi$ above factors over $Q$, the maximal pro-reductive quotient of $G_{\Mh_{\oF} (k)}$, and hence the restriction of $\chi$ to $G^u_{\Mh_{\oF} (k)}$ is trivial. Hence we have
\begin{equation}
 \label{eq:5-25}
 H^1 (G^u_{\Mh_{\oF} (k)} , \GG_a (1))^Q = H^1 (G^u_{\Mh_{\oF} (k)} , \GG_a) (1)^Q = \Hom (G^u_{\Mh_{\oF} (k)} , \GG_a) (1)^Q \; .
\end{equation}
The group on the right is the $\chi^{-1}$ eigenspace of the $Q$-action on $\Hom (G^u_{\Mh_{\oF} (k)} , \GG_a)$. In any case, we get the following inclusion by combining \eqref{eq:5-24} and \eqref{eq:5-25}
\begin{equation}
\label{eq:5-26}
\Ext^1_{\Mh_{\oF} (k)} (k (0) , k (1)) \subset \Hom (G^u_{\Mh_{\oF} (k)} , \GG_a) \; .
\end{equation}
The promised map \eqref{eq:5-20} is obtained as the following composition
\[
Y (\pi^u_k (X_{\oF} , \ox)) \overset{\eqref{eq:5-21}}{=} \oF^{\times} / \mu_F \otimes k \overset{\eqref{eq:5-22}}{\hookrightarrow} \Ext^1_{\Mh_{\oF} (k)} (k (0) , k (1)) \overset{\eqref{eq:5-26}}{\hookrightarrow} \Hom (G^u_{\Mh_{\oF} (k)} , \GG_a) \; . 
\]
(The map \eqref{eq:5-22} is conjectured to be an isomorphism.)

Contrary to \eqref{eq:5-18}, there does not seem to be a non-trivial map \eqref{eq:5-19}. The group $G^{\red}_{\Mh_{\oF}} (k)$ is the motivic Galois group for pure motives over $\oF$ with coefficients in $k$, and
\begin{equation}
\label{eq:5-27}
\Hom_k ((G^{\red}_{\Mh_{\oF} (k)})^{\ab} , \GG_m) = \Hom_k (G^{\red}_{\Mh_{\oF} (k)} , \GG_m) 
\end{equation}
is the group of isomorphism classes or rank $1$ motives over $\oF$ with coefficients in $k$. It follows that for any $k = \ok \supset \QQ$, \eqref{eq:5-19} would induce a homomorphism from the $\QQ$-vector space
\[
\Hom (\pi^{\red}_k (X_{\oF} , \ox) , \GG_m) = \Hom (\pi_k (X_{\oF} , \ox) , \GG_m) = H^1 (X_{\oF} , \uk^{\times}) = \oF^{\times} / \mu_F \otimes k^{\times}
\]
to the group \eqref{eq:5-27}, i.e. a map
\begin{equation}
\label{eq:5-28}
\oF^{\times} / \mu_F \otimes \QQ^{\times} \longrightarrow \Hom_k ((G^{\red}_{\Mh_{\oF} (k)})^{\ab} , \GG_m) \; .
\end{equation}
For $k = \oQ$, the $\oQ$-group
\[
S = (G^{\red}_{\Mh_{\oF} (\oQ)})^{\ab}
\]
is the connected Serre group of $F$ over $\oQ$. Its character group $\Hom_{\oQ} (S , \GG_m)$ is known \cite[\S\,7]{Serre} and does not depend on $F$! It does not contain any infinitely divisible elements except zero and hence any map \eqref{eq:5-28} has to be trivial. Note in this regard that as pointed out in \cite{KS} the Steinberg relations do not hold in the rational cohomology of $X_F$ (hence the above mentioned suggestion of a better space $\tX_F$ mapping to $X_F$). Morover no information about weights is built into the category $\uLoc_k (X_F)$ as we saw in our construction of the map \eqref{eq:5-18}. Both deficits, and also that $\pi_k (X_F , x)^0$ is commutative show that $X_F$ is quite far from a topological space whose proalgebraic fundamental group is more deeply related to the motivic Galois group of $F$.

\bibliographystyle{alpha}
 \bibliography{bibdata}
\end{document}